\def\inmain{1}
\def\usehyperref{1}
\newcommand\texorpdfstring[2]{#1}
\newcommand\nolinkurl[1]{\url{#1}}
\newcommand{\Acal}{{\mathcal A}}
\newcommand{\Ccal}{{\mathcal C}}
\newcommand{\Dcal}{{\mathcal D}}
\newcommand{\Ecal}{{\mathcal E}}
\newcommand{\Fcal}{{\mathcal F}}
\newcommand{\Ical}{{\mathcal I}}
\newcommand{\Jcal}{{\mathcal J}}
\newcommand{\Pcal}{{\mathcal P}}
\newcommand{\Tcal}{{\mathcal T}}
\newcommand{\acal}{\Acal}
\newcommand{\ccal}{\Ccal}
\newcommand{\dcal}{\Dcal}
\newcommand{\ecal}{\Ecal}
\renewcommand{\SS}{\mathbb{S}}
\g@addto@macro\bfseries{\boldmath}
\newcommand{\kr}{\kern -2pt}
\DeclareMathOperator{\id}{id}
\DeclareMathOperator{\Hom}{Hom}
\DeclareMathOperator{\Fun}{Fun}
\DeclareMathOperator{\ev}{ev}
\DeclareMathOperator{\der}{D}
\DeclareMathOperator{\fib}{fib}
\DeclareMathOperator{\cofib}{cofib}
\newcommand{\enh}{{\text{\normalfont enh}}}
\newcommand{\op}{{\text{\normalfont op}}}
\newcommand{\pre}{{\normalfont \text{pre}}}
\DeclareMathOperator{\Spc}{Spc}
\DeclareMathOperator{\Set}{Set}
\DeclareMathOperator{\Cat}{Cat}
\DeclareMathOperator{\colim}{colim}
\DeclareMathOperator{\Tot}{Tot}
\newcommand{\lex}{{\text{\normalfont lex}}}
\DeclareMathOperator{\Ind}{Ind}
\DeclareMathOperator{\Pro}{Pro}
\newcommand{\acc}{{\normalfont \text{acc}}}
\DeclareMathOperator{\Sp}{Sp}
\newcommand{\cn}{{\normalfont\text{cn}}}
\DeclareMathOperator{\LMod}{LMod}
\let \Bar \relax
\DeclareMathOperator{\Bar}{Bar}
\newcommand{\dk}{\Gamma}
\newcommand{\seq}{{\normalfont \text{seq}}}
\DeclareMathOperator{\Top}{Top}
\DeclareMathOperator{\Sh}{Sh}
\let \Im \relax
\DeclareMathOperator{\Im}{Im}
\newcommand{\reg}{{\text{\normalfont reg}}}
\newcommand{\ex}{{\normalfont \text{ex}}} 
\newcommand{\hyp}{{\normalfont \text{hyp}}}
\newcommand{\comp}{{\normalfont\text{comp}}}
\newcommand{\bounded}{{\normalfont \text{b}}}
\newcommand{\wproj}{{\normalfont \text{wproj}}}
\newcommand{\shchat}{\smash{{\vstretch{.8}{\widehat{\vstretch{1.25}{\Sh(\ccal)}}}}}}
\newtheorem{proposition}[subsubsection]{Proposition}
\newtheorem{lemma}[subsubsection]{Lemma}
\newtheorem{theorem}[subsubsection]{Theorem}
\newtheorem{corollary}[subsubsection]{Corollary}
\newtheoremstyle{note}{8.0pt plus 2.0pt minus 4.0pt}{8.0pt plus 2.0pt minus 4.0pt}{}{}{\bfseries}{.}{.5em}{} 
\theoremstyle{note}
\newtheorem{example}[subsubsection]{Example}
\newtheorem{remark}[subsubsection]{Remark}
\newtheorem{notation}[subsubsection]{Notation}
\newtheorem{construction}[subsubsection]{Construction}
\newtheorem{warning}[subsubsection]{Warning}
\newtheorem{definition}[subsubsection]{Definition}
\title{Derived $\infty$-categories as exact completions}
\author{G. Stefanich}
\date{}
\begin{document}


\begin{abstract}
We develop the theory of exact completions of regular $\infty$-categories, and show that the $\infty$-categorical exact completion (resp. hypercompletion) of an abelian category recovers the connective half of its bounded (resp. unbounded) derived $\infty$-category. Along the way, we prove that a finitely complete $\infty$-category is exact and additive if and only if it is prestable, extending a classical characterization of abelian categories. We also establish $\infty$-categorical versions of Barr's embedding theorem and Makkai's image theorem.
\end{abstract}
 
\maketitle

\tableofcontents

\newpage

 
\section{Introduction}

Let $\Acal$ be an abelian category. Then $\Acal$ satisfies the following two conditions:
\begin{enumerate}[\normalfont(1)]
\item Let $X \rightarrow Y$ be a morphism in $\Acal$ and consider the equivalence relation $K = X \times_Y X$ on $X$. Then there is a quotient object $X/K$, which is stable under change of base.
\item Equivalence relations in $\Acal$ are effective: in other words, if $X$ is an object in $\Acal$ and $R \subseteq X \times X$ is an equivalence relation on $X$, then there is a quotient object $X/R$ and furthermore $R = X \times_{X/R} X$.
\end{enumerate}
A category with finite limits satisfying condition (1) is said to be regular, while a regular category satisfying condition (2) is said to be exact. The notions of regularity and exactness were introduced by Barr and Grillet \cite{Exact, Grillet}, and have received attention over the years due to the abundance of examples (every topos and every abelian category is exact, as is any category which is monadic over the category of sets)  and their role in topos theory and categorical logic \cite{Makkai, Carboni, BCRS, vdBMoerdijkAspects,  Ultracat}. 

The theory of exact categories may be thought of as a nonadditive version of the theory of abelian categories. This is justified by the following result from \cite{Exact}, which is attributed to Tierney:

\begin{theorem}[Tierney]\label{theorem classical}
A finitely complete category is abelian if and only if it is exact and additive.
\end{theorem}

 The first goal of this paper is to record a variant of the above result in the setting of higher category theory. Here the role of equivalence relations is played by groupoid objects, giving rise to the following variants of properties (1) and (2), which may be imposed on a finitely complete $\infty$-category $\Ccal$:
  
 \begin{enumerate}[($1_\infty$)]
 \item The $\infty$-category $\Ccal$ admits geometric realizations of \v{C}ech nerves, which are stable under base change.
 \item Groupoid objects in $\Ccal$ are effective.
  \end{enumerate}
  
  A finitely complete $\infty$-category satisfying property ($1_\infty$) is said to be regular, and if it also satisfies property ($2_\infty$) we say that it is exact. We note that this notion of exact $\infty$-category is different from the one studied by Barwick \cite{BarwickExact}, which is an $\infty$-categorical analogue of Quillen's (as opposed to Barr's) notion of exact category.
 
 To formulate an $\infty$-categorical version of theorem \ref{theorem classical}, one is tasked with finding an $\infty$-categorical analogue of the notion of abelian category. This is supplied by Lurie's theory of prestable $\infty$-categories \cite{SAG}. We will review the definition of prestability in section \ref{subsection prestables}; for now we mention that an $\infty$-category with finite limits $\ccal$ is prestable if and only if there is an equivalence between $\ccal$ and the connective half of a t-structure on some stable $\infty$-category. In the same way that categories of modules over rings are the most basic examples of abelian categories, $\infty$-categories of connective modules over connective ring spectra are the most basic examples of prestable $\infty$-categories.
 
 With these  notions at hand we are ready to formulate our first main result:
 
\begin{theorem}\label{theo infty}
A finitely complete $\infty$-category is prestable if and only if it is exact and additive.
\end{theorem}

One of the central features of exact $\infty$-categories is descent: if $\ccal$ is an exact $\infty$-category then the assignment $\ccal \mapsto \ccal_{/X}$ maps geometric realizations of groupoid objects to totalizations of $\infty$-categories. In addition to descent, in higher category theory one is sometimes interested in hyperdescent. In this case, the role of groupoids is played by a class of semisimplicial objects known as Kan semisimplicial objects (see section \ref{subsection hypercomplete exact} for the definition). An exact $\infty$-category is said to be hypercomplete if every Kan semisimplicial object $X_\bullet$ in $\ccal$ admits a geometric realization, and furthermore an augmentation $X_\bullet \rightarrow Y$ is a geometric realization if and only if $X_\bullet$ is a semisimplicial hypercover of $Y$.  Our next result is a variant of theorem \ref{theo infty} that includes the hypercompleteness condition:

\begin{theorem}\label{theo infinity unbounded}
A finitely complete prestable $\infty$-category is hypercomplete if and only if it is separated and admits geometric realizations.
\end{theorem}

The second goal of this paper is to reexamine the procedure of passage to derived $\infty$-categories from the point of view of theorems \ref{theo infty} and \ref{theo infinity unbounded}. In order to do so, we develop an $\infty$-categorical version of the theory of exact completions of regular categories. The fact that every regular category admits a universal embedding into an exact category was observed early on in the development of the subject by Lawvere and Succi Cruciani \cite{Lawvere, Cruciani}, and there is by now an extensive literature on exact completions fueled by the observation that many categories of interest admit a description as an exact completion, see for instance \cite{Carboni, Rosicky, Lack, Shulman, EPConstructive}.

We will in fact develop two different completion procedures: the exact completion, which is the universal embedding of a regular $\infty$-category into an exact $\infty$-category, and the hypercompletion, which is the universal way of turning a regular $\infty$-category into a hypercomplete exact $\infty$-category. We apply these constructions to exact $1$-categories: note that while an exact $1$-category is almost never exact when regarded as an $\infty$-category, it is nevertheless still regular, so it admits an exact completion and a hypercompletion.  Our next theorem identifies the output of these constructions when applied to an abelian category:

\begin{theorem}\label{theorem intro exact completion abelian}
Let $\acal$ be an abelian category. Then the $\infty$-categorical exact completion $\acal^\ex$ is the connective half of the bounded derived $\infty$-category of $\acal$, while the hypercompletion $\acal^\hyp$ is the connective  half of the unbounded derived $\infty$-category of $\acal$.
\end{theorem}

We emphasize that exact completions and hypercompletions are purely nonadditive constructions. We will deduce theorem \ref{theorem intro exact completion abelian} from a more general recognition principle for exact completions and hypercompletions of   exact $1$-categories, which also allows us to identify their value in other cases of interest:

\begin{itemize}
\item Let $\ccal$ be a small category equipped with a Grothendieck topology. Then the hypercompletion of the category of sheaves of sets $\Sh(\ccal, \Set)$   is the $\infty$-category of hypercomplete sheaves of spaces $\Sh(\ccal, \Spc)^\hyp$, while the exact completion of $\Sh(\ccal, \Set)$ is the $\infty$-category of truncated objects of $\Sh(\ccal, \Spc)^\hyp$.
\item Let $\ccal$ be a small category with finite products. Then the hypercompletion of the category $\Fun^{\times}(\ccal, \Set)$ of product preserving functors from $\ccal$ into sets is the $\infty$-category $\Fun^{\times}(\ccal, \Spc)$ of product preserving functors from $\ccal$ into spaces, while the exact completion of $\Fun^{\times}(\ccal, \Set)$ is the $\infty$-category of truncated objects in $\Fun^{\times}(\ccal, \Spc)$.
\end{itemize}

Our construction of $\infty$-categorical exact completions uses the tools of higher topos theory as developed in \cite{HTT}. We obtain the exact completion of a small regular $\infty$-category $\ccal$ as the closure of $\ccal$ under geometric realizations of groupoids inside the $\infty$-category of sheaves of spaces for the regular topology on $\ccal$. Similarly, the hypercompletion is obtained as the closure under geometric realizations of Kan semisimplicial objects of the image of $\ccal$ inside the $\infty$-category of hypercomplete sheaves.

 To prove that these constructions have the desired properties, we develop the theory of regular $\infty$-topoi, which is a variant of Lurie's theory of coherent $\infty$-topoi \cite{DAGVII, SAG} (which is itself a higher categorical version of the classical theory of coherent topoi). We may summarize the parallels between the regular and the coherent setting in the following table:

 \medskip
  \begin{center}
 \begin{tabular}{c | c | c | c }
Logic & Syntactic category & Grothendieck topology & Classifying topoi \\
\hline
Regular & Exact category & Singleton & Regular topoi \\
\hline
Coherent & Pretopos & Finitary & Coherent topoi
\end{tabular}
 \end{center}
 \medskip
 
In the case of regularity, the connections between the different columns are described by the following diagram:
\[
\begin{tikzcd}[column sep = 1.5cm]
	{\substack{\text{Regular}\\ \text{logic}}} && {\substack{\text{Exact} \\ \text{Categories}}} && {\substack{\text{Singleton} \\ \text{Grothendieck} \\ \text{topologies}}} && {\substack{\text{Regular} \\ \text{topoi}}}
	\arrow["{\text{Syntactic category}}", from=1-1, to=1-3]
	\arrow["{\text{Internal logic}}", shift left = 1, bend left = 14pt, from=1-3, to=1-1]
	\arrow["{\text{Regular topology}}", from=1-3, to=1-5]
	\arrow["{\text{Topos of sheaves}}", from=1-5, to=1-7]
	\arrow["{\text{Subcategory of regular objects}}", shift left = 1, bend left = 11pt, from=1-7, to=1-3]
	\arrow["{\text{Classifying topos}}"{description}, shift left=2.5, bend left = 10pt, from=1-1, to=1-7]
\end{tikzcd}
\]

 In the $\infty$-categorical setting, the connection between $\infty$-pretopoi, finitary Grothendieck topologies, and coherent $\infty$-topoi was studied in \cite{SAG}, where it is proven that small hypercomplete $\infty$-pretopoi are in correspondence with hypercomplete  coherent and locally coherent $\infty$-topoi. In this paper we establish a regular version of this result:
 
 \begin{theorem}
 There is a one to one correspondence between small hypercomplete exact $\infty$-categories and hypercomplete regular and locally regular $\infty$-topoi.
 \end{theorem}

The notion of exactness for categories is a particular case of the notion of exactness for $(n,1)$-categories, which we also explore in this paper. Exact $(n,1)$-categories assemble into an $(n+1,1)$-category $(n,1)\kr\Cat_\ex$ whose morphisms are regular functors: that is, functors which preserve finite limits and geometric realizations of \v{C}ech nerves. In the limit $n \to \infty$ we have an $\infty$-category of exact $\infty$-categories $(\infty,1)\kr\Cat_\ex$, which has a full subcategory $(\infty,1)\kr\Cat_{\hyp}$ (resp.  $(\infty,1)\kr\Cat^\bounded_{\ex}$) whose objects are those exact $\infty$-categories which are hypercomplete (resp. have all objects truncated). We may arrange all of these into a sequence of $\infty$-categories
   \[
   (\infty,1)\kr\Cat_{\hyp} \rightarrow (\infty, 1)\kr\Cat^\bounded_{\ex} \rightarrow \ldots \rightarrow (3,1)\kr\Cat_{\ex} \rightarrow (2,1)\kr\Cat_{\ex} \rightarrow (1,1)\kr\Cat_{\ex}
   \] 
  where the transitions are given by passage to the full subcategories of ($n$-)truncated objects. As we shall see, the theory of exact completions and hypercompletions supplies fully faithful left adjoints to the functors in the above sequence. Our next theorem is concerned with the specialization of this picture to the additive case:
  
  \begin{theorem}\label{theorem intro abelian n}
  Let $n \geq 1$ and let $\Acal$ be an additive $(n,1)$-category.
  \begin{enumerate}[\normalfont (1)]
  \item The following are equivalent:
  \begin{enumerate}[\normalfont (i)]
  \item $\acal$ admits finite limits and colimits, every $(n-2)$-truncated morphism in $\acal$ is the fiber of its cofiber, and every $(n-2)$-cotruncated morphism in $\acal$ is the cofiber of its fiber.
  \item $\acal$ is an exact $(n,1)$-category.
  \end{enumerate}
  \item Assume that $\Acal$ satisfies the equivalent conditions from (1). Then the exact completion and the hypercompletion of $\acal$ are finitely complete prestable $\infty$-categories.
  \end{enumerate}
  \end{theorem}
  
 We say that an additive $(n,1)$-category is abelian if it satisfies condition (i) from theorem \ref{theorem intro abelian n}. Part (1) of the theorem is then a generalization to arbitrary values of $n$ of theorem \ref{theorem classical}.  In light of theorems   \ref{theorem intro exact completion abelian} and  \ref{theorem intro abelian n}, we may think about the exact completion (resp. hypercompletion) of an abelian $(n,1)$-category $\acal$ as providing a definition of the connective half of the bounded (resp. unbounded) derived $\infty$-category of $\acal$.
 
  The most basic example of an abelian $(n,1)$-category is given by the $(n,1)$-category $(\LMod_A^\cn)_{\leq n-1}$ of $(n-1)$-truncated connective modules over an $(n-1)$-truncated connective ring spectrum $A$. More generally, any full subcategory of $(\LMod_A^\cn)_{\leq n-1}$ which is closed under finite limits and finite colimits will be an abelian $(n,1)$-category. In the case $n = 1$, the Freyd-Mitchell embedding theorem states that every small abelian category arises in this way for some choice of $A$. 
  
  The Freyd-Mitchell embedding theorem was generalized to the nonadditive context by Barr, who proved that any small regular category admits a regular embedding into a presheaf category \cite{Exact}. There is in fact a canonical such embedding: every small regular category $\ccal$ embeds inside the category $\Fun(\Fun^\reg(\ccal, \Set), \Set)$ of copresheaves on the category $\Fun^\reg(\ccal, \Set)$ of regular functors from $\ccal$ into sets \cite{MakkaiFull}. It was shown by Makkai that in the case when $\ccal$ is exact then the image of this embedding consists precisely of those copresheaves on $\Fun^\reg(\ccal,\Set)$ which preserve small products and filtered colimits  \cite{Makkai}. Our next theorem generalizes these results to the $\infty$-categorical setting:

    \begin{theorem}\label{theorem barr embedding intro}
 Let $\ccal$ be a small regular $\infty$-category.
\begin{enumerate}[\normalfont (1)]
\item The $\infty$-category $\Fun^\reg(\ccal, \Spc)$ is accessible and admits small products and filtered colimits.
\item Assume that every object of $\ccal$ is truncated. Then the functor 
\[
\Upsilon_\ccal: \ccal \rightarrow \Fun(\Fun^\reg(\ccal, \Spc), \Spc)
\]
 induced by evaluation is fully faithful.
\item Assume that $\ccal$ is exact and that every object of $\ccal$ is truncated. Then the image of $\Upsilon_\ccal$ consists of those functors which are truncated and preserve small products and filtered colimits.
\end{enumerate}  
 \end{theorem}
 
 Specializing  theorem \ref{theorem barr embedding intro} to the additive setting we deduce the following generalization of the Freyd-Mitchell embedding theorem:
 
\begin{corollary}\hfill
\begin{enumerate}[\normalfont (1)]
\item  Let $n \geq 1$, and let $\acal$ be a small abelian $(n,1)$-category. Then there exists an $(n-1)$-truncated connective ring spectrum $A$ and a fully faithful exact functor $\acal \rightarrow (\LMod_A^\cn)_{\leq n-1}$.
\item Let $\ccal$ be a small finitely complete prestable $\infty$-category. Assume that every object of $\ccal$ is the limit of its Postnikov tower. Then there exists a connective ring spectrum $A$ and a fully faithful exact functor $\ccal \rightarrow \LMod_A^\cn$.
\end{enumerate}
\end{corollary}

\subsection{Conventions and notation}

In the main body of the paper we use the convention where the word category stands for $\infty$-category, and use the term $(1,1)$-category if we wish to refer to the classical notion.  Similarly, we use the word topos to refer to $\infty$-topoi in the sense of \cite{HTT}.

Each category $\ccal$ has a Hom bifunctor $\Hom_{\ccal}(-, -)$. If $\Ccal$ is a category and $k \geq -2$ is an integer we will denote by $\Ccal_{\leq k}$ the full subcategory of $\Ccal$ on the $k$-truncated objects. If the inclusion $\ccal_{\leq k} \rightarrow \ccal$ admits a left adjoint, this will be denoted by $\tau_{\leq k}$. If  $\ccal = \ccal_{\leq n-1}$ for some integer $n \geq -1$ we say that $\ccal$ is an $(n,1)$-category. Given a pair of categories $\ccal , \dcal$ we denote by $\Fun(\ccal, \dcal)$ the category of functors from $\ccal$ to $\dcal$.

For each topos $\ccal$ we denote by $\ccal^\hyp$ the full subcategory of $\ccal$ on the hypercomplete objects; in other words, this is the localization of $\ccal$ at the class of $\infty$-connective morphisms. A family of maps $f_\alpha: X_\alpha \rightarrow X$ in a topos is said to be jointly effectively epimorphic if the induced map $f: \coprod X_\alpha \rightarrow X$ is an effective epimorphism. A family of objects $\Fcal$ in $\ccal$ is said to be a generating family if for every $X$ in $\ccal$ there exists a jointly effectively epimorphic family of maps $f_\alpha: X_\alpha \rightarrow X$ such that $X_\alpha$ belongs to $\Fcal$ for all $\alpha$. A full subcategory of $\ccal$ is said to be a generating subcategory if its objects form a generating family. A functor between topoi $F: \ccal \rightarrow \dcal$ is said to be geometric (or a geometric morphism) if it preserves colimits and finite limits.   We will also make use of the notions of jointly effective epimorphic families, generating families and geometric morphisms in the setting of $(n,1)$-topoi, by which we mean $n$-topoi in the sense of \cite{HTT}.

We let $\Delta$ (resp. $\Delta_+$) be the category whose objects are nonempty (resp. possibly empty) totally ordered finite sets and whose morphisms are order preserving functions. For each $n \geq -1$ we let $[n]$ be the totally ordered set consisting of integers $i$ such that $0 \leq i \leq n$. We denote by $\Delta_{\normalfont\text{s}}$ (resp. $\Delta_{{\normalfont \text{s}}, +}$) the wide subcategory of $\Delta$ (resp. $\Delta_{+}$)  whose morphisms are the injective order preserving functions. For each $n \geq 0$ we let $\Delta_{  < n}$ (resp. $\Delta_{\normalfont\text{s}, < n}$) be the full subcategory of $\Delta$ (resp. $\Delta_{\normalfont\text{s}}$) on those nonempty totally ordered sets with at most $n$ elements.

If $\ccal$ is a category, a simplicial (resp. semisimplicial) object in $\ccal$ is a functor $X_\bullet: \Delta^\op \rightarrow \ccal$ (resp. $X_\bullet :\Delta^{\op}_{\normalfont \text{s}} \rightarrow \ccal$). For each $n \geq 0$ we denote the value of $X_\bullet$ on $[n]$ by $X_n$. An augmentation for $X_\bullet$ is an extension to $\Delta_{+}$ (resp. $\Delta_{\normalfont\text{s}, +}$). A colimit for $X_\bullet$ is called a geometric realization, and is denoted by $|X_\bullet|$. A (semi)cosimplicial object in $\ccal$ is a (semi)simplicial object in $\ccal^\op$. A limit for a (semi)cosimplicial object $X_\bullet$ in $\ccal$ is called a totalization, and is denoted by $\Tot X_\bullet$.

Let $\ccal$ be a category with finite limits. Then for each morphism $f: X_0 \rightarrow X$ in $\ccal$ one may define a simplicial object $X_\bullet$ in $\ccal$ called the \v{C}ech nerve of $f$, whose value on a nonempty totally ordered set $S$ is given by the image under the forgetful functor $\ccal_{/X} \rightarrow \ccal$ of the product $f^S$. This is an example of a groupoid object in $\ccal$: in other words, it has the property that for every  totally ordered finite set $S$ and every pair of subsets $S_0, S_1 \subseteq S$ such that $S_0 \cap S_1 = [0]$ and $S_0 \cup S_1 = S$ we have $X(S) = X(S_0) \times_{X([0])} X(S_1)$.

\subsection{Size conventions}\label{subsection size management}

We fix a sequence of nested universes. Objects belonging to the first two universes are called small and large, respectively. We denote by $\Cat$ the category of small categories and by $\Spc$ the category of small spaces. Given a small category $\ccal$ we will denote by $\Pcal(\ccal) = \Fun(\ccal^\op, \Spc)$ the category of presheaves on $\ccal$. If $\ccal$ is equipped with a Grothendieck topology we denote by $\Sh(\ccal)$ the resulting topos of sheaves.

Throughout the paper we will frequently be studying a small category $\ccal$ by embedding it inside the large category $\Sh(\ccal)$ of sheaves for some subcanonical Grothendieck topology on $\ccal$. To facilitate these manipulations, we adopt the following convention: when introducing a category $\ccal$ we will implicitly assume $\ccal$ to be small unless it is otherwise clear from the context (for instance by the assumption that $\ccal$ is a topos, or by the fact that $\ccal$ arises as a category of modules over a ring spectrum).  That being said, our results and constructions apply to non-necessarily small categories as well, after a change in universe.

\ifx\inmain\undefined
\bibliographystyle{myamsalpha2}
\bibliography{References}
\fi

\newpage


\section{Regularity and exactness} \label{section exactness}

The goal of this section is to study the notions of regularity and exactness in the setting of higher category theory, and to construct the exact completion, both in the $\infty$-categorical and the $(n,1)$-categorical settings.

 We begin in \ref{subsection regular} with a discussion of  regularity. One of the central features of regular categories is that they admit a good notion of image for morphisms. More precisely, if $f: X \rightarrow Y$ is a morphism in a regular category $\ccal$, the geometric realization of the \v{C}ech nerve of $f$ defines a subobject of $Y$ called the image of $f$, which is also the smallest subobject of $Y$ through which $f$ factors. Throughout the paper we work with functors of regular categories which preserve finite limits and image factorizations; we call these regular functors.
 
  A fundamental tool in the study of regular categories is the regular topology. This is a Grothendieck topology on $\ccal$ defined by the property that a sieve is covering if and only if it contains a morphism $f: X \rightarrow Y$ whose image is $Y$.  We use the regular topology in \ref{subsection exact} to show that every regular category $\ccal$ admits a  univesal regular functor into an exact category $\ccal^\ex$, which we call the exact completion of $\ccal$.
  
  Finally, in \ref{subsection n1} we study the notion of exactness for $(n,1)$-categories. We show that the exact completion provides a fully faithful embedding from the category of exact $(n,1)$-categories and regular functors into the category of exact categories and regular functors, and provide a characterization of its image. We close this section by providing a description of the $(n,1)$-categorical exact completion of a regular $(n,1)$-category.


\subsection{Regular categories}\label{subsection regular}

We begin by discussing the notion of regularity.

\begin{definition}\label{def regular}
Let $\Ccal$ be a finitely complete category. We say that $\Ccal$ is regular if for every map $f: X \rightarrow Y$ in $\ccal$, the \v{C}ech nerve $X_\bullet$ of $f$  admits a geometric realization, and for every base change $f': X' \rightarrow Y'$ of $f$, the comparison map $|Y' \times_Y X_\bullet| \rightarrow Y' \times_Y |X_\bullet|$ is an equivalence.
\end{definition}

\begin{remark}
Let $\Ccal$ be a finitely complete $(1,1)$-category and let $f : X \rightarrow Y$ be a morphism in $\Ccal$ with \v{C}ech nerve $X_\bullet$.  Let $\Delta^\op_{< 2}$ be the full subcategory of $\Delta^\op$ on the objects $[0]$ and $[1]$. A colimit of the restriction of $X_\bullet$  to $\Delta^\op_{< 2}$ is the same as a quotient object for the equivalence relation $K = X \times_Y X$ on $X$. The fact that $\Ccal$ is a $(1,1)$-category implies that such a quotient object exists if and only if a geometric realization for $X_\bullet$ exists, and in such a case we have $|X_\bullet| = X/K$. It follows that $\Ccal$ is regular in the sense of definition  \ref{def regular} if and only if it satisfies condition (1) from the introduction. In other words, when specialized to $(1,1)$-categories, definition \ref{def regular} recovers the classical notion of regularity.
\end{remark}

The main feature of regular categories is that they have a well behaved notion of image factorizations for morphisms:

\begin{notation}
Let $\Ccal$ be a regular category and let $f: X \rightarrow Y$ be a morphism in $\Ccal$. We denote by $\operatorname{Im}(f)$ the geometric realization of the \v{C}ech nerve of $f$.
\end{notation}

\begin{proposition}\label{prop image is subobject}
Let $\Ccal$ be a regular category and let $f: X \rightarrow Y$ be a morphism in $\Ccal$.
\begin{enumerate}[\normalfont(1)]
\item The canonical map $i: \operatorname{Im}(f) \rightarrow Y$ is $(-1)$-truncated.
\item A subobject $Y'$ of $Y$ contains $\operatorname{Im}(f)$ if and only if $f$ factors through $Y'$.
\end{enumerate} 
\end{proposition}
\begin{proof}
We first prove item (1).   The fact that $\Ccal$ is regular implies that the base change of $i$ along $f$  agrees with the morphism $\Im(g) \rightarrow X$, where $g: X \times_{Y} X \rightarrow  X$ is the projection. Since $g$ has a section we have that the augmented \v{C}ech nerve for $g$ is split, and hence $\Im(g) = X$. Let $X_\bullet$ be the \v{C}ech nerve of $f$. Using again the fact that $\Ccal$ is regular we have
\[
\Im(f) \times_{Y} \Im(f) = |X_\bullet| \times_Y \Im(f) = | X_\bullet \times_Y \Im(f)| =| X_\bullet | = \Im(f)
\]
which implies that $i$ is a monomorphism, as desired.

We now prove item (2). The only if direction is clear. Assume now that $Y'$ is such that $f$ factors through $Y'$. The fact that $Y'$ is a subobject of $Y$ implies that the morphism of \v{C}ech nerves of $f$ and of the corestriction of $f$ to $Y'$ agree. Passing to geometric realizations shows that $\Im(f)$ is equivalent to the image of $X \rightarrow Y'$, which is contained in $Y'$, as desired.
\end{proof}

\begin{definition}
Let $\Ccal$ be a regular category. A morphism $f: X \rightarrow Y$ in $\ccal$ is said to be an effective epimorphism if $\Im(f) = Y$.
\end{definition}

The following properties of effective epimorphisms are clear:

\begin{proposition}
Let $\Ccal$ be a regular category. Then:
\begin{enumerate}[\normalfont (1)]
\item A morphism $f: X \rightarrow Y$ in $\Ccal$ is an effective epimorphism if and only if there are no nontrivial subobjects of $Y$ through which $f$ factors.
\item For every morphism $f: X \rightarrow Y$ in $\Ccal$, the morphism $X \rightarrow \Im(f)$ is an effective epimorphism.
\item Invertible morphisms in $\Ccal$ are effective epimorphisms.
\item Effective epimorphisms in $\Ccal$ are stable under base change.
\item Let $f: X \rightarrow Y$ and $g: Y \rightarrow Z$ be morphisms in $\Ccal$. If $f$ and $g$ are effective epimorphisms then $g \circ f$ is an effective epimorphism.
\item Let $f: X \rightarrow Y$ and $g: Y \rightarrow Z$ be morphisms in $\Ccal$. If $g \circ f$ is an effective epimorphism then $g$ is an effective epimorphism.
\end{enumerate}
\end{proposition}

\begin{remark}\label{remark effective epis in trucation}
Let $\Ccal$ be a regular category and let $k \geq 0$. It follows from part (1) of proposition \ref{prop image is subobject} that if $f: X \rightarrow Y$ is a morphism in $\Ccal_{\leq k}$ then $\Im(f)$ belongs to $\Ccal_{\leq k}$. It follows that $\Ccal_{\leq k}$ is regular and a morphism in $\Ccal_{\leq k}$ is an effective epimorphism if and only if its image in $\Ccal$ is an effective epimorphism.
\end{remark}

\begin{definition}
A functor $F: \Ccal \rightarrow \Dcal$ between regular categories is said to be regular if it is left exact and sends effective epimorphisms to effective epimorphisms.
\end{definition}

\begin{remark}
Let $F: \Ccal \rightarrow \Dcal$ be a functor between regular categories. Then $F$ is regular if and only if it is left exact and preserves geometric realizations of \v{C}ech nerves.
\end{remark}

\begin{notation}
We denote by $\Cat_{\reg}$ the subcategory of $\Cat$ on the regular categories and regular functors. For each pair of regular categories $\Ccal, \Dcal$ we  denote by $\Fun^\reg(\Ccal, \Dcal)$ the full subcategory of $\Fun(\Ccal, \Dcal)$ on the regular functors.
\end{notation}

As in the case of topoi, the notion of effective epimorphism in a regular category is the first of a series of connectivity conditions:

\begin{definition}
Let $\Ccal$ be a regular category. For each integer $n \geq -1$ we  define the notion of $n$-connectivity for morphisms in $\Ccal$, inductively as follows:
\begin{itemize}
\item Every morphism is $(-1)$-connective.
\item For each $n \geq 0$, a morphism $X \rightarrow Y$ is $n$-connective if and only if it is an effective epimorphism and the diagonal $X \rightarrow X \times_Y X$ is $(n-1)$-connective.
\end{itemize}
We say that a morphism in $\Ccal$ is $\infty$-connective if it is $n$-connective for all $n \geq 0$. We say that an object $X$ in $\Ccal$ is $n$-connective (resp. $\infty$-connective) if the projection from $X$ to the terminal object in $\Ccal$ is $n$-connective (resp. $\infty$-connective). 
\end{definition}

A fundamental tool in the study of regular categories is the regular topology:

\begin{definition}\label{definition regular topology}
Let $\Ccal$ be a regular category. The regular topology on $\Ccal$ is the Grothendieck topology where a sieve is covering if and only if it contains an effective epimorphism. We will denote by $\Sh(\Ccal)$ the topos of sheaves on $\Ccal$ with respect to the regular topology.
\end{definition}

\begin{remark}
Let $\ccal$ be a regular category. Then every representable presheaf on $\ccal$ is a sheaf for the regular topology. We will usually identify $\ccal$ with the full subcategory of $\Sh(\ccal)$ on the representable presheaves.
\end{remark}

\begin{remark}\label{remark univ prop sh}
Let $\Ccal$ be a regular category. Then the inclusion $\Ccal \rightarrow \Sh(\Ccal)$ creates finite limits and effective epimorphisms. In particular, it is regular. Furthermore, an application of \cite{HTT} proposition 6.2.3.20 shows that for every topos $\Tcal$, restriction to $\Ccal$ induces an equivalence between the category of geometric morphisms of topoi  $\Sh(\Ccal) \rightarrow \Tcal$ and the category of regular functors $\Ccal \rightarrow \Tcal$.
\end{remark}

\begin{remark}
Let $\Ccal$ be a regular category. Then a morphism $f: X \rightarrow Y$ in $\Ccal$ is $n$-connective (resp. $n$-truncated) if and only if its image in $\Sh(\Ccal)$ is $n$-connective (resp. $n$-truncated). In this way, one may deduce many properties of the class of $n$-connective (resp. $n$-truncated) morphisms in $\Ccal$ by reduction to the case of topoi. 
\end{remark}


\subsection{Exact categories and exact completion}\label{subsection exact}

We now discuss the notion of exact category.

\begin{definition}\label{definition exactness}
Let $\Ccal$ be a regular category. A groupoid object in $\Ccal$ is said to be effective if it is the \v{C}ech nerve of some morphism. We say that $\Ccal$ is exact if every groupoid object in $\Ccal$ is effective.
\end{definition}

\begin{notation}
We denote by $\Cat_\ex$ the full subcategory of $\Cat_\reg$ on the exact categories.
\end{notation}
 
\begin{remark}
Let $\Ccal$ be a regular category. A groupoid object $X_\bullet$ in $\Ccal$ is effective if and only if it admits a geometric realization, and $X_\bullet$ agrees with the \v{C}ech nerve of  the map $X_0 \rightarrow |X_\bullet|$.
\end{remark}

\begin{example}
Every topos is exact. Indeed, exactness is a subset of the conditions imposed by Giraud's axioms.
\end{example}

\begin{example}\label{example created regular and exact}
Let $f_\alpha: \Ccal \rightarrow \Dcal_\alpha$ be a family of functors, with $\Dcal_\alpha$ regular (resp. exact) for all $\alpha$. Assume that the family $f_\alpha$ creates pullbacks and geometric realizations of \v{C}ech nerves (resp. groupoid objects)\footnote{Recall that a family of functors $f_\alpha: \ccal \rightarrow \dcal_\alpha$ create the (co)limit of a diagram $F: \Ical \rightarrow \ccal$ if $F$ admits a (co)limit and furthermore a (co)cone for $F$ is a (co)limit if and only if its image under $f_\alpha$ is a (co)limit for all $\alpha$}. Then $\Ccal$ is regular (resp. exact). Specializing this fact we deduce the following:
\begin{itemize}
\item Let $\Ical$ be a category and $\ccal$ be a regular (resp. exact) category. Then $\Fun(\Ical, \ccal)$ is regular (resp. exact).
\item Let $\Tcal$ is a category with finite products and $\ccal$ be a regular (resp. exact) category. Then the category of product preserving functors $\Tcal \rightarrow \ccal$ is also regular (resp. exact). In other words, passing to categories of models over multisorted finite product theories preserves regularity and exactness.
\item Let $\ccal$ be a regular (resp. exact) category and let $X$ be an object of $\ccal$. Then $\Ccal_{/X}$ and $\ccal_{X/}$ are regular (resp. exact).
\end{itemize}
\end{example}

\begin{warning}\label{warning exact}
Let $\Ccal$ be a $(1,1)$-category satisfying conditions (1) and (2) from the introduction (in other words, $\Ccal$ is exact in the classical sense). Then $\Ccal$ does not generally satisfy definition  \ref{definition exactness}. For instance, the category of sets is a $(1,1)$-topos and in particular is exact as a $1$-category, however only those groupoid objects in sets which come from equivalence relations are effective. We will by default use the word exact to  refer to the notion from definition \ref{definition exactness}. A $(1,1)$-category which is exact in the classical sense will be said to be $1$-exact, or an exact $(1,1)$-category (see definition \ref{def n exact} below).
\end{warning}

Our next goal is to construct for each regular category $\ccal$ the universal embedding of $\ccal$ into an exact category.

\begin{notation}\label{notation exact completion}
Let $\Ccal$ be a regular category. We let $\Ccal^{\ex}$ be the smallest subcategory of $\Sh(\Ccal)$ containing the representable sheaves and closed under geometric realizations of groupoid objects.
\end{notation}

\begin{remark}\label{remark regular generates exact completion}
Let $\Ccal$ be a regular category and let $X$ be an object of $\Ccal^\ex$. Then there exists an object $X'$ in $\Ccal$ and an effective epimorphism $X' \rightarrow X$.
\end{remark}

\begin{remark}\label{remark sizes exact completion}
According to our conventions regarding sizes from \ref{subsection size management}, in notation \ref{notation exact completion} we are implicitly assuming $\ccal$ to be small. By virtue of being a full subcategory of a large category, $\ccal^\ex$ is a priori a large category. We may write $\ccal^\ex$ as the union of an increasing sequence of small full subcategories $(\ccal^\ex)_\alpha$ of $\Sh(\ccal)$ indexed by ordinals, defined in the following way:
\begin{itemize}
\item $(\ccal^\ex)_0 = \ccal$.
\item For each $\alpha$ the full subcategory $(\ccal^\ex)_{\alpha + 1}$ consists of those objects of $\Sh(\ccal)$ which are geometric realizations of groupoids that factor through $(\ccal^\ex)_\alpha$.
\item For each limit ordinal $\alpha$ the full subcategory $(\ccal^\ex)_{\alpha}$ is the union of $(\ccal^\ex)_\beta$ over all $\beta < \alpha$.
\end{itemize}
The above sequence stabilizes after the first uncountable ordinal. It follows from this that $\ccal^\ex$ is in fact a small category.

Note that our definition of $\ccal^\ex$ depends a priori on the boundary between small and large (since it makes use of the topos $\Sh(\ccal)$ of small sheaves on $\ccal$). It turns out however that $\ccal^\ex$ is independent of this, since the category of small sheaves is closed under small colimits and limits inside the category of large sheaves.
\end{remark}

The category $\Ccal^{\ex}$ is called the exact completion of $\Ccal$. The name is justified by the following:

\begin{theorem}\label{teo properties completion}
Let $\Ccal$ be a regular category.  Then:
\begin{enumerate}[\normalfont (1)]
\item The category $\Ccal^{\ex}$ is exact and the inclusion $\ccal \rightarrow \ccal^\ex$ is regular.
\item Let $\Dcal$ be an exact category. Then composition with the inclusion $\Ccal \rightarrow \Ccal^{\ex}$  induces an equivalence $\Fun^{\reg}(\Ccal^\ex ,\Dcal) = \Fun^{\reg}(\Ccal, \Dcal)$.
\end{enumerate}
\end{theorem}

As a consequence of theorem \ref{teo properties completion}, we have the following:

\begin{corollary}
 The inclusion $\Cat_\ex \hookrightarrow \Cat_\reg$ admits a left adjoint that sends each regular category $\ccal$ to $\ccal^\ex$. In particular, if $\ccal$ is an exact category then $\ccal = \ccal^\ex$.
 \end{corollary}
 
The proof of theorem \ref{teo properties completion} requires some preliminary lemmas.
  
\begin{lemma}\label{lemma density} 
Let $\iota: \Ccal \rightarrow \Dcal$ be a fully faithful left exact functor between finitely complete categories. Assume that $\ccal$ and $\dcal$ are equipped with Grothendieck topologies, and that the following hypothesis is satisfied:
\begin{enumerate}[\normalfont ($\ast$)]
\item Let  $\lbrace f_\alpha: X_\alpha \rightarrow X \rbrace$ be a covering sieve on an object $X$ in $\ccal$. Then the sieve  on $\iota(X)$ generated by $\lbrace \iota (f_\alpha): \iota (X_\alpha) \rightarrow \iota(X) \rbrace$ is a covering sieve. Furthermore, every covering sieve on $\iota(X)$ contains a sieve that arises in this way.
\end{enumerate}
Then the induced functors $\iota^*: \Sh(\ccal) \rightarrow \Sh(\dcal)$ and $\iota^{*, \hyp}: \Sh(\ccal)^\hyp \rightarrow \Sh(\dcal)^\hyp$ are fully faithful.
  \end{lemma}
  \begin{proof}
  We begin by showing that the functor $\iota^*: \Sh(\Ccal) \rightarrow \Sh(\Dcal)$ induced by $\iota$ is fully faithful.  Let $\iota_*$ be the right adjoint to $\iota^*$ and let $\eta: \id \rightarrow \iota_* \iota^*$ be the unit of the adjunction. We will prove that $\iota^*$ is fully faithful by showing that $\eta$ is an isomorphism.
  
   Let $\iota^{*,\pre}: \Pcal(\ccal) \rightarrow \Pcal(\dcal)$ be the functor induced by $\iota$. Let $\iota_*^\pre$ be the right adjoint to $\iota^{*,\pre}$ and let $\eta^\pre: \id \rightarrow \iota^\pre_* \iota^{*, \pre}$ be the unit of the adjunction.  Then for every $X$ in $\Sh(\Ccal)$ the map $\eta_X$ may be written as a composition
\[
X \xrightarrow{\eta^{\text{pre}}_X} \iota^\pre_* \iota^{*,\pre} (X) \xrightarrow{\iota^\pre_* \nu} \iota^\pre_* \iota^*(X) = \iota_* \iota^*(X)
\]
where $\nu: \iota^{*,\pre}(X) \rightarrow \iota^*(X)$ is the sheafification of $\iota^{*, \pre}(X)$. Since $\iota$ is fully faithful the functor $\iota^{*,\pre}$ is also fully faithful, and hence $\eta^\pre_X$ is an isomorphism. We may thus reduce to showing that $\iota_*^\pre \nu$ is an isomorphism. Since its source and target are sheaves, it is enough for this to show that $\iota_*^\pre$ maps local isomorphisms to local isomorphisms. Using the fact that $\iota_*^\pre$ preserves colimits  we reduce to proving that for every object $Y$ in $\dcal$ and every covering sieve $j: U \hookrightarrow Y$ on $Y$, the map $\iota_*^\pre(j)$ is a local isomorphism. To do so it suffices to show that for every $X$ in $\ccal$ and every morphism $f: X \rightarrow \iota_*^\pre(Y)$ the base change of $\iota_*^\pre(j)$ along $f$ is a local isomorphism. This agrees with the image under $\iota_*^\pre$ of the base change of $j$ along the induced map $\iota(X) \rightarrow Y$. Replacing $j$ with this base change we may now reduce to the case when $Y = \iota(X)$. In this case $j$ defines a covering sieve on $\iota(X)$, and the desired claim follows from an application of ($\ast$).

We now prove that the functor $\iota^{*,\hyp}: \Sh(\Ccal)^\hyp \rightarrow \Sh(\Dcal)^\hyp$ induced by $\iota$ is fully faithful. Let $\iota_*^\hyp$ be the right adjoint to $\iota^{*,\hyp}$ and let $\eta^\hyp: \id \rightarrow \iota_*^\hyp \iota^{*,\hyp}$ be the unit of the adjunction. Then for every $X$ in $\Sh(\Ccal)^\hyp$ the map $\eta^\hyp_X$ may be written as a composition
\[
X \xrightarrow{\eta_X} \iota_* \iota^* (X) \xrightarrow{\iota_* \nu} \iota_* \iota^{*, \hyp}(X) = \iota_*^\hyp \iota^{*, \hyp}(X)
\]
where $\nu: \iota^*(X) \rightarrow \iota^{*, \hyp}(X)$ is the hypercompletion of $\iota^*(X)$. Recall from the first part of the proof that $\eta_X$ is an isomorphism. Furthermore, since $\iota_*^\pre$ maps local isomorphisms to local isomorphisms we have that  $\iota_*$ is colimit preserving. The functor $\iota_*$ is a right adjoint so it is also limit preserving. Hence $\iota_* \nu$ is $\infty$-connective, and since its source and target are hypercomplete, we conclude that $\iota_* \nu$ is an isomorphism. It follows that $\eta^\hyp_X$ is an isomorphism. Since $X$ was arbitrary, we have that $\eta^\hyp$ is an isomorphism, and the result follows. 
\end{proof}

\begin{lemma}\label{lemma fully faithfulness ex and hyp}
Let $\iota: \Ccal \rightarrow \Dcal$ be a fully faithful regular functor between regular categories. Assume that for every effective epimorphism $f: X \rightarrow \iota(Y)$ in $\Dcal$ there exists a morphism $g: \iota(X') \rightarrow X$ such that  $f \circ g$ is an effective epimorphism. Then the induced functors $\iota^*: \Sh(\ccal) \rightarrow \Sh(\dcal)$ and $\iota^{*, \hyp}: \Sh(\ccal)^\hyp \rightarrow \Sh(\dcal)^\hyp$ are fully faithful.
 \end{lemma}
\begin{proof}
This is a specialization of lemma \ref{lemma density}.
\end{proof} 

\begin{proof}[Proof of theorem \ref{teo properties completion}]
We first prove item (1). Since this holds with $\Sh(\ccal)$ replacing $\ccal^\ex$ and $\ccal^\ex$ is closed under geometric realizations of groupoid objects in $\Sh(\ccal)$, it suffices to show that $\Ccal^{\ex}$ is closed under finite limits inside $\Sh(\Ccal)$. The fact that $\Ccal^{\ex}$ contains the final object of $\Sh(\Ccal)$ follows from the fact that the Yoneda embedding $\Ccal \rightarrow \Sh(\Ccal)$ preserves final objects. It remains to show that $\Ccal^{\ex}$ is closed under pullbacks.

Say that an object $Z$ in $\Ccal^\ex$ is good if for every pair of maps $X \rightarrow Z$ and $Y \rightarrow Z$ with $X, Y$ in $\Ccal$, the fiber product $X \times_Z Y$ belongs to $\Ccal^\ex$. We first claim that if $Z$ is good then in fact $X \times_Z Y$ belongs to $\Ccal^\ex$ for every pair of maps $X \rightarrow Z$ and $Y \rightarrow Z$ with $X, Y$ in $\Ccal^\ex$. Indeed, since $\Ccal^{\ex}$ is closed under geometric realizations of groupoids, the condition that $X \times_Z Y$ belongs to $\Ccal^\ex$ is also closed under realizations of groupoids in the $X$ and $Y$ variables, so if it holds for $X, Y$ in $\Ccal$ it also holds for $X, Y$ in $\Ccal^\ex$.

To show that $\Ccal^\ex$ is closed under pullbacks it will suffice to show  that every object in $\Ccal^\ex$ is good. Say that an object $Z$ in $\Ccal^\ex$ is excellent if $Z^n$ is good for all $n > 0$. We will show that every object in $\Ccal^\ex$ is excellent. Since $\Ccal$ is closed under finite limits inside $\Sh(\Ccal)$, we see that every object in $\Ccal$ is excellent. We may therefore reduce our claim to showing that if $Z_\bullet$ is a groupoid in $\Ccal^\ex$ which is termwise excellent, then $|Z_\bullet|$ is excellent. Since for all $n > 0$ we have that $|Z_\bullet|^n$ is the geometric realization of the groupoid $Z_\bullet^n$ (which is termwise excellent) we may reduce to showing that $|Z_\bullet|$ is good.

Let $f_X: X \rightarrow |Z_\bullet|$ and $f_Y: Y \rightarrow |Z_\bullet|$ be a pair of maps with $X, Y$ in $\Ccal$. We will show that $X \times_{|Z_\bullet|} Y$ belongs to $\Ccal^\ex$. Since the map $Z_0 \rightarrow |Z_\bullet|$ is an effective epimorphism, we may pick an effective epimorphism $X_0 \rightarrow X$ with $X_0$ in $\Ccal$, such that the restriction of $f_X$ to $X_0$ factors through $Z_0$. Let $X_\bullet$ be the \v{C}ech nerve of the map $X_0 \rightarrow X$. Then $X \times_{|Z_\bullet|} Y$ is the geometric realization of the groupoid $X_{\bullet} \times_{ |Z_\bullet|} Y$. We may therefore replace $X$ by $X_n$ and in this way reduce to the case when $f_X$ factors through $Z_0$. Similarly, we may assume that $f_Y$ factors through $Z_0$. We now have an equivalence 
\[
X \times_{|Z_\bullet|} Y = (X \times Y) \times_{Z_0 \times Z_0} Z_1.
 \]
  Our claim follows from the fact that  $X \times Y$ and $Z_1$ belong to $\Ccal^\ex$, since $Z_0$ is excellent.

We now prove item (2). We first claim that $\Dcal = \Dcal^\ex$. Let $X_{\bullet}$ be a groupoid object of $\Dcal$. Since $\Dcal$ is exact, we have that $X_{\bullet}$ is the \v{C}ech nerve of an effective epimorphism $X_0 \rightarrow Y$ in $\Dcal$. It follows that $X_{\bullet}$ is the \v{C}ech nerve of $X_0 \rightarrow Y$ in $\Sh(\Dcal)$. Since $X_0 \rightarrow Y$ is an effective epimorphism in $\Sh(\Dcal)$, we have that the geometric realization for $X_{\bullet}$ inside $\Sh(\Dcal)$ recovers $Y$. Hence $\Dcal$ is closed under geometric realizations in $\Sh(\Dcal)$, as claimed.

Since the inclusions $\Ccal \rightarrow \Ccal^\ex$ and $\Dcal \rightarrow \Sh(\Dcal)$ are regular, we have a commutative square of categories
\[
\begin{tikzcd}
\Fun^\reg(\Ccal^\ex, \Dcal) \arrow{r}{} \arrow{d}{} & \Fun^\reg(\Ccal, \Dcal) \arrow{d}{} \\
\Fun^\reg(\Ccal^\ex, \Sh(\Dcal)) \arrow{r}{} & \Fun^\reg(\Ccal, \Sh(\Dcal)).
\end{tikzcd}
\]
The vertical arrows are fully faithful, and their images consist of those regular functors into $\Sh(\Dcal)$ which factor through $\Dcal$. The fact that $\Dcal$ is closed under geometric realizations of groupoid objects inside $\Sh(\Dcal)$ implies that a regular functor $\Ccal^\ex \rightarrow \Sh(\Dcal)$ factors through $\Dcal$ if and only if its restriction to $\Ccal$ does. We may therefore reduce to showing that the bottom horizontal arrow is an equivalence. Applying remark \ref{remark univ prop sh} we reduce to showing that the geometric morphism of topoi $\iota^*: \Sh(\Ccal) \rightarrow \Sh(\Ccal^\ex)$ induced by the regular embedding $\Ccal \rightarrow \Ccal^\ex$ is an equivalence. Combining remark \ref{remark regular generates exact completion} with lemma \ref{lemma fully faithfulness ex and hyp} we deduce that $\iota^*$ is fully faithful. Since $\ccal^\ex$ is generated under geometric realizations of groupoid objects by $\ccal$, we see that $\Sh(\ccal^\ex)$ is generated under colimits by $\ccal$. Using the fact that $\iota^*$ is colimit preserving we deduce that $\iota^*$ is surjective, and hence it is an equivalence, as desired. 
\end{proof}

\begin{remark}
Let $\ccal$ be a category with finite limits, and let $\ccal'$ be the full subcategory of $\Pcal(\ccal)$ on the objects of the form $\Im(f)$ for some map $f: X \rightarrow Y$ in $\ccal$. Then $\ccal'$ contains $\ccal$, and is closed under finite limits and geometric realizations of \v{C}ech nerves inside $\Pcal(\ccal)$.  It follows that $\ccal'$ is a regular category, and the inclusion $\ccal \rightarrow \ccal'$ is a regular functor. A slight variant of the proof of part (2) of theorem \ref{teo properties completion} shows that $\ccal'$ is the universal regular category equipped with a left exact functor from $\ccal$. We call $\ccal'$ the regular completion of $\ccal$. Note that we have an identification $\Pcal(\ccal) = \Sh(\ccal')$, and hence the exact completion $(\ccal')^\ex$ is given by the closure of $\ccal$ under geometric realizations of groupoid objects in $\Pcal(\ccal)$. The inclusion $\ccal \rightarrow (\ccal')^\ex$ is the universal left exact functor from $\ccal$ into an exact category.
\end{remark}

The embedding $\Ccal \rightarrow \Sh(\Ccal)$ can be used to import several basic results from higher topos theory into the world of exact categories:

\begin{proposition}\label{proposition 0 truncated are exact}
Let $\Ccal$ be an exact category and let $k \geq -2$.
\begin{enumerate}[\normalfont (1)]
\item The classes of $(k+1)$-connective and $k$-truncated morphisms in $\Ccal$ form an orthogonal factorization system which is stable under base change.
\item The inclusion $\Ccal_{\leq k} \rightarrow \Ccal$ admits a left adjoint $\tau_{\leq k}$ which preserves finite products.
\item A morphism $f: X \rightarrow Y$ in $\Ccal$ is an effective epimorphism if and only if $\tau_{\leq 0}(f)$ is an effective epimorphism.
\item Let $f: X \rightarrow Y$ and $g: Y \rightarrow Z$ be a pair of morphisms in $\ccal$. If $g \circ f$ is $(k+1)$-connective and $g$ is $(k+2)$-connective then $f$ is $(k+1)$-connective.
\end{enumerate}  
\end{proposition}

The proof of proposition \ref{proposition 0 truncated are exact} rests on the following lemma:

\begin{lemma}\label{lemma stable under truncations}
Let $\Ccal$ be an exact category and let $k \geq -2$. Let $f: X \rightarrow Y$ be a morphism in $\Ccal$ and let $f': X' \rightarrow Y$ be the $k$-truncation of $f$ in $\Sh(\Ccal)_{/Y}$. Then $X'$ belongs to $\Ccal$.
\end{lemma}
\begin{proof}
We argue by induction on $k$. The case $k = -2$ is clear. Assume now that $k \geq -1$ and the proposition holds for $k-1$. We will show that $X'$ is the geometric realization of a groupoid taking values in $\Ccal$. Since the map $p: X \rightarrow X'$ is an effective epimorphism it will suffice to show that the \v{C}ech nerve of $p$ takes values in $\Ccal$. Since $\Ccal$ is closed under finite limits in $\Sh(\Ccal)$ we may reduce to proving that $X \times_{X'} X$ belongs to $\Ccal$. The map $X \rightarrow X \times_{X'} X$ exhibits $ X \times_{X'} X$ as the $(k-1)$-truncation of $X$ in $\Sh(\Ccal)_{/X \times_Y X}$.  The desired claim now follows from the inductive hypothesis.
\end{proof}

\begin{proof}[Proof of proposition \ref{proposition 0 truncated are exact}]
All of these assertions are well known in the setting of topoi. The proposition now follows from lemma \ref{lemma stable under truncations} together with the fact that the inclusion $\Ccal = \Ccal^\ex \rightarrow \Sh(\Ccal)$ is regular and reflects effective epimorphisms.
\end{proof}


\subsection{Exact \texorpdfstring{$(n,1)$}{(n,1)}-categories}\label{subsection n1}

We now study the theory of exact $(n,1)$-categories. We will need the  following notion from \cite{HTT} section 6.4:

\begin{definition}
Let $\Ccal$ be a finitely complete category and let $n \geq 0$ be an integer. We say that a groupoid object $X_\bullet$ in $\Ccal$ is $n$-efficient if the map $X_1 \rightarrow X_0 \times X_0$ is $(n-2)$-truncated. 
\end{definition} 

The definition of exactness for $(n,1)$-categories is a variant of the $\infty$-categorical definition where only $n$-efficient groupoid objects are required to be effective:

\begin{definition}\label{def n exact}
Let $n \geq 0$ be an integer and let $\Ccal$ be a regular $(n,1)$-category. We say that $\Ccal$ is $n$-exact (or an exact $(n,1)$-category) if every $n$-efficient groupoid object in $\Ccal$ is effective.
\end{definition}

\begin{example}
Let $n \geq 0$ and let $\Ccal$ be an $(n,1)$-topos in the sense of \cite{HTT}. Then $\Ccal$ is $n$-exact. Indeed, $n$-exactness is part of Giraud's axioms for $(n,1)$-topoi. 
\end{example}

\begin{example}\label{example truncate exact is n exact}
Let $\Ccal$ be an exact category. Then for each $n \geq 0$ the category $\Ccal_{\leq n-1}$ is $n$-exact. Indeed, $\Ccal_{\leq n-1} = \Ccal^\ex \cap \Sh(\Ccal)_{\leq n-1}$ is closed under finite limits and geometric realizations of $n$-efficient groupoid objects inside the $(n,1)$-topos $\Sh(\Ccal)_{\leq n-1}$.
\end{example}

\begin{example}
Let $n \geq 0$ and let $f_\alpha: \Ccal \rightarrow \Dcal_\alpha$ be a family of functors, with $\Dcal_\alpha$ an exact $(n,1)$-category for all $\alpha$. Assume that the family $f_\alpha$ creates pullbacks and geometric realizations of $n$-efficient groupoid objects for all $\alpha$. Then $\Ccal$ is an exact $(n,1)$-category. As in example \ref{example created regular and exact} this implies that exact $(n,1)$-categories are stable under passage to categories of diagrams, categories of models under multisorted finite product theories, overcategories, and undercategories. 
\end{example}

\begin{remark}
Let $\Ccal$ be a finitely complete $(1,1)$-category. Then the data of a $1$-efficient groupoid object $X_\bullet$ on $\Ccal$ is the same as the data of an object $X_0$ and an equivalence relation $R = X_1$ on $X_0$. Assume now that $\Ccal$ is regular. Then $X_\bullet$ is effective if and only if the quotient $X_0 / R$ exists and the map $X_1 \rightarrow X_0 \times_{X_0/R} X_0$ is an isomorphism. In other words, $X_\bullet$ is effective if and only if $R$ is an effective equivalence relation on $X_0$. It follows that $\Ccal$ is $1$-exact if and only if it satisfies condition (2) from the introduction. In other words, the case $n = 1$ of definition \ref{def n exact} recovers the classical notion of exactness.
\end{remark}

An exact $(n,1)$-category is usually not exact when regarded as an $(\infty,1)$-category (see warning \ref{warning exact}). We now study the behavior of the exact completion from \ref{subsection exact} when applied to exact $(n,1)$-categories.

\begin{proposition}\label{prop ex completion of ex n cat}
Let $n \geq 0$ and let $\Ccal$ be an exact $(n,1)$-category. Then the inclusion $\Ccal \rightarrow (\Ccal^\ex)_{\leq n-1}$ is an equivalence.
\end{proposition}
\begin{proof}
 We need to show that if $X$ in $\Ccal^\ex$ is $(n-1)$-truncated then $X$ belongs to $\Ccal$. We will in fact show that if $f: X \rightarrow Y $ is an $(n-1)$-truncated map and $Y$ belongs to $\Ccal$ then $X$ belongs to $\Ccal$. Assume that $f$ is $k$-truncated for some $-2 \leq k \leq n-1$. We argue by induction on $k$. The case $k = -2$ is clear.  Assume now that $-1 \leq k \leq n-1$ and that the assertion holds for $k-1$.  Pick an effective epimorphism $g: X' \rightarrow X$ with $X'$ in $\Ccal$. It suffices to show that the \v{C}ech nerve of $g$ takes values in $\Ccal$. Since $\Ccal$ is closed under finite limits we may reduce to showing that $X' \times_X X'$ belongs to $\Ccal$. This follows from the inductive hypothesis, since 
 \[
 X' \times_X X' = (X' \times_{Y} X') \times_{X \times_Y X} X
 \]
  has a $(k-1)$-truncated map to $X' \times_{Y} X'$. 
\end{proof}

\begin{definition}
Let $\Ccal$ be a regular category and let $k \geq -2$ be an integer. We say that $\Ccal$ is $k$-complicial if for every $X$ in $\Ccal$ there exists a $k$-truncated object $X'$ and an effective epimorphism $X' \rightarrow X$.
\end{definition}

\begin{notation}\label{notation bounded}
Let $\ccal$ be a regular category. We denote by $\ccal_{<\infty}$ the full subcategory of $\ccal$ on the truncated objects. If  $\ccal = \ccal_{<\infty}$ we will say that $\ccal$ is bounded. We denote by $\Cat^\bounded_\ex$ the full subcategory of $\Cat_\ex$ on the bounded exact categories. 
\end{notation}

\begin{remark}\label{remark Clessinfty closed under}
Let $\ccal$ be an exact category. Then $\Ccal_{< \infty}$ is closed under finite limits inside $\ccal$. We claim that  $\Ccal_{< \infty}$ is also closed under geometric realizations of groupoid objects inside $\ccal$ (and therefore $\ccal_{<\infty}$ is exact and the inclusion $\ccal_{<\infty} \rightarrow \ccal$ is regular). To see this we need to show that if $X_\bullet$ is a groupoid object in $\Ccal$ which is levelwise truncated, then $|X_\bullet|$ is truncated. Equivalently, we must show that the diagonal $|X_\bullet| \rightarrow |X_\bullet| \times |X_\bullet|$ is truncated. Since the map $X_0 \rightarrow X_\bullet$ is an effective epimorphism, we may reduce to showing that the projection
\[
X_1 = (X_0 \times X_0) \times_{|X_\bullet| \times |X_\bullet|} |X_\bullet| \rightarrow X_0 \times X_0
\]
is truncated, which follows from the fact that both $X_1$ and $X_0$ are truncated.
\end{remark}

\begin{proposition}\label{prop image is complicial}
Let $n \geq 0$.
\begin{enumerate}[\normalfont (1)]
\item Let $\Ccal$ be a regular $(n,1)$-category. Then $\Ccal^\ex$  is $(n-1)$-complicial and bounded.
\item Let $\Dcal$ be an $(n-1)$-complicial exact category. Then the functor $(\Dcal_{\leq n-1})^\ex \rightarrow \Dcal$ restricts to an equivalence $(\Dcal_{\leq n-1})^\ex = \dcal_{<\infty}$.
\end{enumerate}
\end{proposition}
\begin{proof}
We first prove item (1). The fact that $\Ccal^\ex$ is $(n-1)$-complicial is a direct consequence of the fact that every object in $\Ccal^\ex$ receives an effective epimorphism from an object in $\Ccal$. The fact that $\ccal^\ex$ is bounded follows from remark \ref{remark Clessinfty closed under}, since $\ccal$ is bounded and $\ccal^\ex$ is generated under geometric realizations of groupoid objects by $\ccal$.

We now prove (2).  Denote by $\iota$ the inclusion $\dcal_{\leq n-1} \rightarrow \dcal$. An application of lemma \ref{lemma fully faithfulness ex and hyp} shows that the induced functor $\iota^\ex: (\dcal_{\leq n-1})^\ex \rightarrow \dcal^\ex = \dcal$ is fully faithful. It remains to show that the image of $\iota^\ex$ consists of the truncated objects of $\Dcal$. It suffices for this to show that $\iota^\ex$ defines an equivalence when restricted to the full subcategories of $k$-truncated objects, for every $k \geq -2$. We prove this by induction on $k$. The case $k \leq n-1$ follows from proposition \ref{prop ex completion of ex n cat}. Assume now that $k > n-1$ and that the assertion is known for $k-1$.  Let $X$ be a $k$-truncated object of $\Dcal$.  Since $\dcal$ is $(n-1)$-complicial we may find an effective epimorphism $f: X' \rightarrow X$  with $X'$ an $(n-1)$-truncated object of $\Dcal$. Let $X'_\bullet$ be the \v{C}ech nerve of $f$, so that $X = |X'_\bullet|$. The projection $X' \times_X X' \rightarrow X' \times X'$ is a base change of the diagonal of $X$, and hence it is $(k-1)$-truncated. Since $X'$ is $(n-1)$-truncated we see that $X' \times_X X'$ is $(k-1)$-truncated. It follows that $X'_\bullet$ is levelwise $(k-1)$-truncated. Our inductive hypothesis now implies that $X'_\bullet$ is the image under $\iota^\ex$ of a groupoid object $S_\bullet$ in $(\Dcal_{\leq n-1})^\ex$. Since $\iota^\ex$ is regular we have that $X = \iota^\ex(|S_\bullet|)$, which proves the inductive step.
\end{proof}

\begin{notation}
For each integer $n \geq 0$ we let $(n,1)\kr\Cat_\ex$ be the full subcategory of $\Cat_\reg$ on the exact $(n,1)$-categories.
\end{notation}

\begin{corollary}\label{corollary fully faithful exact on n1}
Let $n \geq 0$. Then the exact completion functor $(n,1)\kr\Cat_\ex \rightarrow \Cat_{\ex}$ is fully faithful, and its image consists of the $(n-1)$-complicial bounded exact categories. 
\end{corollary}
\begin{proof}
Let $\Ccal$ be an exact $(n,1)$-category and let $\Dcal$ be an exact category. Every left exact functor $\Ccal \rightarrow \Dcal$ factors through $\Dcal_{\leq n-1}$. It follows that composition with the inclusion $\Dcal_{\leq n-1} \rightarrow \Dcal$ induces an equivalence $\Fun^\reg(\Ccal, \Dcal_{\leq n-1}) = \Fun^\reg(\Ccal, \Dcal)$. Hence the exact completion functor $(n,1)\kr\Cat_\ex \rightarrow \Cat_{\ex}$  admits a right adjoint which sends each exact category $\Dcal$ to $\Dcal_{\leq n-1}$. The fully faithfulness part of the corollary now follows from proposition \ref{prop ex completion of ex n cat}, while the description of the image follows from proposition \ref{prop image is complicial}.
\end{proof}

We finish by recording the following result which extends the theory of exact completions to the setting of $(n,1)$-categories:

\begin{proposition}\label{prop n ex completion}
Let $n \geq 0$. Let $\Ccal$ be a regular $(n,1)$-category and let $\Dcal$ be an exact $(n,1)$-category. Then precomposition with the inclusion $\Ccal \rightarrow (\Ccal^\ex)_{\leq n-1}$  induces an equivalence $\Fun^{\reg}((\Ccal^\ex)_{\leq n-1}, \Dcal) \rightarrow \Fun^\reg(\Ccal, \Dcal)$.
\end{proposition}
\begin{proof}
We have a commutative square of categories
\[
\begin{tikzcd}
\Fun^\reg((\Ccal^\ex)_{\leq n-1}, \Dcal) \arrow{r}{} \arrow{d}{} & \Fun^\reg(\Ccal, \Dcal) \arrow{d}{} \\
 \Fun^\reg((\Ccal^\ex)_{\leq n-1}, \Dcal^\ex) \arrow{r}{} & \Fun^\reg(\Ccal, \Dcal^\ex).
\end{tikzcd}
\]
It follows from proposition \ref{prop ex completion of ex n cat} applied to $\Dcal$ that the vertical arrows are equivalences. We may thus reduce to showing that the bottom horizontal arrow is an equivalence. This sits inside a commutative square
\[
\begin{tikzcd}
\Fun^\reg((\Ccal^\ex)_{\leq n-1}, \Dcal^\ex) \arrow{r}{} \arrow{d}{} & \Fun^\reg(\Ccal, \Dcal^\ex) \arrow{d}{} \\ 
\Fun^\reg(((\Ccal^\ex)_{\leq n-1})^\ex, \Dcal^\ex) \arrow{r}{} & \Fun^\reg(\Ccal^\ex, \Dcal^\ex)
\end{tikzcd}
\]
where the vertical arrows are an equivalence by theorem \ref{teo properties completion}. We may thus reduce to showing that the map $\eta: \Ccal \rightarrow (\Ccal^{\ex})_{\leq n-1}$ becomes an equivalence after passing to exact completions. The identity on $\Ccal^\ex$ may be factored as
\[
\Ccal^\ex \xrightarrow{\eta^\ex} ((\Ccal^\ex)_{\leq n-1})^\ex \xrightarrow{\epsilon} \Ccal^\ex
\]
where $\epsilon$ is induced by the inclusion $(\Ccal^\ex)_{\leq n-1} \rightarrow \Ccal^\ex$. We may thus reduce to showing that $\epsilon$ is an equivalence. By proposition \ref{prop image is complicial} we have that both $\Ccal^\ex$ and $((\Ccal^\ex)_{\leq n-1})^\ex$ are $(n-1)$-complicial and have the property that all their objects are truncated. We may thus reduce to showing that $\epsilon$ induces an equivalence on $(n-1)$-truncated objects, which follows from proposition \ref{prop ex completion of ex n cat}.
\end{proof}

\ifx\inmain\undefined
\bibliographystyle{myamsalpha2}
\bibliography{References}
\fi


\section{Regular topoi}\label{section regular topoi}

As we discussed in section \ref{section exactness}, a fundamental tool in the study of regular and exact categories is the topos of sheaves for the regular topology. If a topos $\ccal$ arises as the topos of sheaves on a regular category, then the full subcategory $\ccal_0 \subseteq \ccal$ on the  representable sheaves has the following properties:
\begin{enumerate}[\normalfont (a)]
\item $\ccal_0$ is a generating subcategory for $\ccal$: in other words, every object of $\ccal$ admits an effective epimorphism from a coproduct of objects of $\ccal_0$.
\item $\ccal_0$ is closed under finite limits inside $\ccal$.
\item Let $f_\alpha: X_\alpha \rightarrow Y$ be a jointly effectively epimorphic family of maps in $\ccal$, with $Y$ in $\ccal_0$. Then there exists $\alpha$ such that $f_\alpha$ is an effective epimorphism.
\end{enumerate}

The goal of this section is to abstract the above features. We begin in \ref{subsection regular topoi} by introducing the classes of regular and locally regular topoi, as a variant of the classes of coherent and locally coherent topoi from \cite{DAGVII,SAG}. As we shall see, a topos is regular and locally regular if and only if it admits a full subcategory $\ccal_0$ satisfying conditions (a), (b) and (c). There is in this case a maximal choice of $\ccal_0$, whose objects we call regular. Various properties of regular objects and regular topoi are collected in theorem \ref{teo properties regularity}.

Every regular category gives rise to a regular topos upon passage to sheaves for the regular topology. In \ref{subsection singleton} we discuss a generalization of this fact: the topos of sheaves on any finitely complete category equipped with a subcanonical singleton Grothendieck topology is  regular and locally regular. In this setting, descent and hyperdescent conditions admit particularly simple characterizations, see proposition \ref{prop describe hypercompletes}.

Finally, in \ref{subsection n1 topoi} we study an $(n,1)$-categorical variant of the theory of regular topoi. Our main result on this subject is theorem \ref{theorem regular n1 are}, which provides a one to one correspondence between regular $(n,1)$-topoi and (small) exact $(n,1)$-categories.  	As we shall see in section \ref{section completeness}, a similar result holds also in the limit $n \to \infty$ under additional hypercompleteness hypotheses. In particular,  up to hypercompletion every regular and locally regular topos arises as the topos of sheaves with respect to some subcanonical singleton Grothendieck topology.


\subsection{Regular objects in topoi}\label{subsection regular topoi}

We begin with a discussion of the notion of regular topos.

\begin{definition}\label{definition regular topos}
Let $\Ccal$ be a topos. We say that an object $X$ in  $\Ccal$ is $0$-regular if for every family of subobjects $X_\alpha$ of $X$ whose union is $X$, we have $X_\alpha = X$ for some $\alpha$. We say that $X$ is locally $0$-regular if the collection of $0$-regular objects in $\Ccal_{/X}$ is a generating family. We now define the notions of (local) $n$-regularity for all integers $n \geq 1$ by induction, as follows:
\begin{enumerate}[\normalfont (1)]
\item We say that $X$ is $n$-regular if $\Ccal_{/X}$ is locally $(n-1)$-regular, and the collection of $(n-1)$-regular objects in $\ccal_{/X}$ is closed under finite products.
\item We say that $X$ is locally $n$-regular if  the collection of $n$-regular objects in $\Ccal_{/X}$ is a generating family.
\end{enumerate} 
We also extend the definition to the case $n = -1$, by declaring every object to be $(-1)$-regular and locally $(-1)$-regular. We say that $X$ is regular if it is $n$-regular for all $n \geq 0$, and locally regular if the collection of regular objects in $\Ccal_{/X}$ is a generating family. We say that $\Ccal$ is locally regular (resp. regular, locally $n$-regular, $n$-regular) if its terminal object is locally regular (resp. regular, locally $n$-regular, $n$-regular).
\end{definition}

\begin{warning}
The usage of regularity in definition \ref{definition regular topos} should not be confused with that of definition \ref{def regular}: every topos is a regular category in the sense of definition \ref{def regular}, however not every topos is a regular topos in the sense of definition \ref{definition regular topos}.
\end{warning}

\begin{remark}
It follows from the definitions that the property that an object $X$ in a topos $\Ccal$ be locally regular (resp. regular, locally $n$-regular, $n$-regular) only depends on the topos $\Ccal_{/X}$.
\end{remark}

\begin{remark}
Let $\Ccal$ be a topos. Then an object $X$ in $\Ccal$ is $0$-regular if and only if for every jointly effectively epimorphic family $f_\alpha: X_\alpha \rightarrow X$, there exists an index $\alpha$ such that $f_\alpha$ is an effective epimorphism. In particular, if $\Ccal_0$ is a generating subcategory of $\Ccal$ and $X$ is $0$-regular, there exists an effective epimorphism $U \rightarrow X$ with $U$ in $\Ccal_0$.
\end{remark}

\begin{remark}
Let $\Ccal$ be a topos. Then the initial object of $\Ccal$ is not $0$-regular, since it admits an empty covering family.
\end{remark}

The following result records some basic properties of the notion of regularity. See also \cite{SAG} appendix A where similar properties are established for the notion of coherence.\footnote{Parts (1), (4) through (8), and (10) hold with regularity replaced with coherence, with essentially the same proofs, while part (9)  holds if one assumes that $\Ccal_0$ is closed under finite coproducts.}

\begin{theorem}\label{teo properties regularity}
Let $\Ccal$ be a topos.
\begin{enumerate}[\normalfont (1)]
\item Let $f: X \rightarrow Y$ be a morphism in $\Ccal$. If $Y$ is locally regular then $X$ is locally regular. The converse holds provided that $f$ is an effective epimorphism.
\item Every regular object of $\Ccal$ is coherent, in the sense of \cite{SAG} definition A.2.0.12.
\item Every regular object of $\Ccal$ is almost compact.
\item The collection of regular objects of $\Ccal$ is closed under pullbacks.
\item Assume that $\Ccal$ is regular. Then an object $X$ in $\Ccal$ is regular if and only if $\tau_{\leq k}(X)$ is regular for all $k$.
\item Let $f: X \rightarrow Y$ be an effective epimorphism in $\Ccal$, where $X$ is regular. Then $Y$ is regular if and only if $X\times_Y X$ is regular.
\item Assume that $\Ccal$ is regular. Then its hypercompletion $\Ccal^\hyp$ is regular, and an object in $\Ccal$ is regular if and only if its image in $\Ccal^\hyp$ is regular.
\item Let $X_\bullet$ be a semisimplicial hypercover of an object $X$ in $\Ccal$ (see definition \ref{definition hypercover}). If $X_n$ is  regular for all $n$ then $X$ is regular.
\item Let $\Ccal_0$ be a generating subcategory of $\Ccal$. Assume that every object of $\Ccal_0$ is regular. Then an object in $\Ccal$ is regular if and only if it admits a semisimplicial hypercover by objects in $\Ccal_0$.
\item Let $\Ccal_0$ be a generating subcategory of $\Ccal$. Assume that $\Ccal_0$ is closed under fiber products and that every object of $\Ccal_0$ is $0$-regular. Then every object of $\Ccal_0$ is regular.
\end{enumerate}
\end{theorem}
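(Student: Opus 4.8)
The plan is to prove by induction on $n$ that every object of $\Ccal_0$ is $n$-regular; letting $n \to \infty$ then yields regularity. First I would reduce to the case where the terminal object $1$ lies in $\Ccal_0$. For $X \in \Ccal_0$ the full subcategory $(\Ccal_0)_{/X} \subseteq \Ccal_{/X}$ of objects $U \to X$ with $U \in \Ccal_0$ again generates, is closed under fiber products (since $\Ccal_0$ is and $X \in \Ccal_0$), has every object $0$-regular (by the third remark, $U \to X$ is $0$-regular in $\Ccal_{/X}$ iff $U$ is in $\Ccal$), and now contains the terminal object $\mathrm{id}_X$. Since $n$-regularity of $X$ depends only on $\Ccal_{/X}$, the object $X$ is regular in $\Ccal$ iff $\mathrm{id}_X$ is regular in $\Ccal_{/X}$. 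Hence it suffices to prove the statement under the extra assumption $1 \in \Ccal_0$ and then apply it to each pair $(\Ccal_{/X},(\Ccal_0)_{/X})$. Along with $(\mathrm{a}_n)$ ``every object of $\Ccal_0$ is $n$-regular'' I would carry through the induction two auxiliary clauses: $(\mathrm{b}_n)$ $n$-regularity descends along semisimplicial hypercovers whose terms lie in $\Ccal_0$, and $(\mathrm{c}_n)$ every $n$-regular object of $\Ccal$ admits such a hypercover.

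Assume now $1 \in \Ccal_0$, and fix $X \in \Ccal_0$ at the inductive step. That $\Ccal_{/X}$ is locally $(n-1)$-regular is immediate: by the inductive hypothesis every object of $\Ccal_0$ is $(n-1)$-regular in $\Ccal$, so each $U \to X$ in $(\Ccal_0)_{/X}$ is $(n-1)$-regular in $\Ccal_{/X}$, and these objects generate. The essential remaining point is that the $(n-1)$-regular objects of $\Ccal_{/X}$ are closed under finite products; equivalently, that the class of $(n-1)$-regular objects of $\Ccal$ is closed under fiber products over the base $X \in \Ccal_0$ (the empty product $\mathrm{id}_X$ is $(n-1)$-regular by the inductive hypothesis).

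For $n-1 = 0$ this is easy: given $P \to X \leftarrow Q$ with $P,Q$ being $0$-regular, the third remark supplies single effective epimorphisms $U \to P$, $V \to Q$ with $U,V \in \Ccal_0$; then $U \times_X V \to P \times_X Q$ is an effective epimorphism with $U \times_X V \in \Ccal_0$ again $0$-regular, and an effective-epimorphic image of a $0$-regular object is $0$-regular (pull a jointly effective family back along the cover, apply $0$-regularity of the source, and right-cancel effective epimorphisms), so $P \times_X Q$ is $0$-regular. For $n-1 \geq 1$ the single cover is replaced by hypercovers: using $(\mathrm{c}_{n-1})$ choose semisimplicial hypercovers $P_\bullet \to P$ and $Q_\bullet \to Q$ with all terms in $\Ccal_0$, form the levelwise fiber product $P_\bullet \times_X Q_\bullet \to P \times_X Q$, which is again a hypercover, and note that its terms $P_k \times_X Q_k$ lie in $\Ccal_0$ — this is exactly where closure of $\Ccal_0$ under fiber products over the object $X \in \Ccal_0$ enters. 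By the inductive hypothesis the terms are $(n-1)$-regular, so by $(\mathrm{b}_{n-1})$ the object $P \times_X Q$ is $(n-1)$-regular, completing the step and showing $X$ is $n$-regular.

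The main obstacle is establishing the auxiliary clauses $(\mathrm{b}_n)$ and $(\mathrm{c}_n)$ themselves at each finite level, since the product closure above is not available as a soft fact: for $n-1 \geq 1$ an effective-epimorphic image of an $(n-1)$-regular object need not be $(n-1)$-regular, so one genuinely needs descent along a full hypercover rather than along a single cover. The delicate feature, special to regularity as opposed to coherence (compare the footnote), is that $0$-regularity yields covers by a \emph{single} object of $\Ccal_0$ rather than by a finite coproduct; this is precisely what keeps the hypercover terms inside $\Ccal_0$ after taking fiber products, but it forces one to verify that the higher matching objects occurring in the hypercover remain $0$-regular, so that they too admit single $\Ccal_0$-covers. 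Propagating $0$-regularity through the skeleta of the hypercover, and proving the descent statement $(\mathrm{b}_n)$ in tandem, is where the bulk of the work lies and where the argument meshes with the descent results of the section.
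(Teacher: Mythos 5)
Your proposal only addresses part (10) of the ten\-/part theorem, so even granting its argument it leaves (1)--(9) untouched; I will focus on the part (10) argument. Its skeleton --- induction on $n$, slicing so that the terminal object lies in $\Ccal_0$, reducing to closure of the $(n-1)$-regular objects of $\Ccal_{/X}$ under finite products, and the base case via single covers $U \to P$, $V \to Q$ with $U \times_X V \in \Ccal_0$ --- is sound, and the base case is exactly lemma \ref{lemma estimates regularity}(2) for $n=0$. But the inductive core rests entirely on the deferred clauses $(\mathrm{b}_n)$ and $(\mathrm{c}_n)$, and $(\mathrm{c}_n)$ is where the plan genuinely breaks. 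To build a full semisimplicial hypercover $S_\bullet$ of a merely $(n-1)$-regular object $P$ with terms in $\Ccal_0$, you must at \emph{every} level $m$ choose a single effective epimorphism from an object of $\Ccal_0$ onto the matching object $S(\partial[m])$, which requires $S(\partial[m])$ to be $0$-regular for all $m$. But $S(\partial[m])$ is an iterated fiber product whose regularity degrades by a step at each level: already $S_0 \times_P S_0$ is only guaranteed $(n-2)$-regular when $P$ is $(n-1)$-regular, and by level $m$ one controls roughly $(n-1-m)$-regularity, so $0$-regularity --- hence the existence of a single $\Ccal_0$-cover --- is lost once $m$ exceeds the regularity level. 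You flag exactly this issue ("propagating $0$-regularity through the skeleta") but offer no mechanism to resolve it, and none is available: this is precisely why part (9) of the theorem assumes \emph{full} regularity, its proof using that regular objects are closed under all finite limits, and at stage $n$ of your induction $P$ and $Q$ are arbitrary $(n-1)$-regular objects of $\Ccal_{/X}$, not objects of $\Ccal_0$, so no a priori closure property rescues the construction. At best your method yields hypercovers truncated around level $n$, and then $(\mathrm{b}_n)$ would have to be reworked for truncated objects, which is no longer "soft."

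The paper avoids hypercovers altogether for part (10), and this is the real divergence. Its engine is the one-level \v{C}ech descent statement of lemma \ref{lemma descend regularity}: if $f \colon X' \to Y$ is an effective epimorphism with $X'$ $n$-regular, and $X' \times_Y Z$ is $(n-1)$-regular for every map $Z \to Y$ from a generating subcategory of $(n-1)$-regular objects, then $Y$ is $n$-regular; from this follows lemma \ref{lemma check regular on generating subcat}, which lets one test $n$-regularity of the topos on pairwise products of objects of the generating subcategory alone, rather than on all pairs of $(n-1)$-regular objects. Part (10) then takes three lines: in $\Ccal_{/X}$ the subcategory $(\Ccal_0)_{/X}$ generates, consists of $(n-1)$-regular objects by induction, and is closed under finite products because $\Ccal_0$ is closed under fiber products. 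If you wish to salvage your route, the repair is to descend along the single cover $W = U \times_X V \to P \times_X Q$ using a statement like lemma \ref{lemma descend regularity 2}, reducing to $(n-2)$-regularity of $W \times_{P \times_X Q} W = (U \times_P U) \times_X (V \times_Q V)$ --- but proving that descent statement is exactly the content of the paper's lemmas, so the fix amounts to reconstructing the paper's argument rather than completing yours.
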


The proof of theorem \ref{teo properties regularity} needs some preliminary lemmas.

\begin{lemma} \label{lemma estimates regularity}
Let $\Ccal$ be a topos. Let $n \geq 0$ and let $f: X \rightarrow Y$ be a morphism in $\Ccal$.
\begin{enumerate}[\normalfont (1)]
\item If $Y$ is locally $n$-regular then $X$ is locally $n$-regular. The converse holds provided that $f$ is an effective epimorphism.
\item  If $f$ is $n$-connective and  $X$ is $n$-regular then $Y$ is $n$-regular.
\item If $f$ is $(n+1)$-connective and  $Y$ is $n$-regular then $X$ is $n$-regular.
\item If $f$ is $(n-2)$-truncated, $Y$ is $n$-regular and $X$ is $(n-1)$-regular, then $X$ is $n$-regular.
\end{enumerate}
\end{lemma}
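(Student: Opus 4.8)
The plan is to dispose of part (1) first and unconditionally, and then to prove parts (2)--(4) together by induction on $n$. For part (1), the decisive observation is that $n$-regularity of an object is absolute, i.e. depends only on the slice topos beneath it (as recorded in the remark following definition \ref{definition regular topos}), so that local $n$-regularity of $X$ is equivalent to the assertion that the $n$-regular objects lying over $X$ form a generating family of $\Ccal_{/X}$. Granting this, for the first statement I would take any $W \to X$, compose with $f$ to view $W$ over $Y$, and use local $n$-regularity of $Y$ to cover $W$ by $n$-regular objects $Z_i \to W$; since these come with maps to $X$ and $n$-regularity is absolute, the same family covers $W$ by $n$-regular objects over $X$. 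For the converse, with $f$ an effective epimorphism and $W \to Y$ arbitrary, I would base change to the effective epimorphism $W \times_Y X \to W$, cover $W \times_Y X$ (an object over $X$) by $n$-regular objects, and compose. Only stability of effective epimorphisms under base change and composition is used, and no induction enters.

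Before the main induction I would record the monotonicity statements that $n$-regular implies $(n-1)$-regular and locally $n$-regular implies locally $(n-1)$-regular. These are immediate once one notes that closure of the $(n-1)$-regular objects of $\Ccal_{/X}$ under finite products forces the terminal object $\mathrm{id}_X$ (the empty product) to be $(n-1)$-regular, i.e. forces $X$ to be $(n-1)$-regular. I would also recall the standard facts that a $k$-connective map is an effective epimorphism for $k \geq 0$, that a $1$-connective map is exactly one inducing an equivalence on $0$-truncations, that connectivity and truncatedness are stable under base change, and that the diagonal $\Delta_f$ of an $(n+1)$-connective (resp. $(n-2)$-truncated) map is $n$-connective (resp. $(n-3)$-truncated). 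The case $n=-1$ of (2)--(4) is vacuous, while the base case $n=0$ must be argued by hand, since there $0$-regularity is given by its concrete description rather than the inductive clause: for (2) one pulls a jointly effectively epimorphic family on $Y$ back along $f$, applies $0$-regularity of $X$, and pushes forward by cancellation of effective epimorphisms; for (3) one composes a family on $X$ with $f$, applies $0$-regularity of $Y$, and uses that $f$ is an equivalence on $0$-truncations; and for (4) the map $f$ is an equivalence.

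For the inductive step at $n \geq 1$, I would unwind the assertion ``$X$ is $n$-regular'' into the requirements that $X$ be locally $(n-1)$-regular and that the $(n-1)$-regular objects of $\Ccal_{/X}$ be closed under finite products. In all three parts the local condition is supplied by part (1) together with monotonicity. The empty product is handled either directly from the hypotheses (in case (4), where $X$ is assumed $(n-1)$-regular) or by the level $n-1$ instance of the relevant part applied to $f$ itself, after lowering its connectivity index by monotonicity. For binary products $A \times_X B$ of $(n-1)$-regular objects the decisive move is to compare with the product $A \times_Y B$ formed over $Y$: the canonical map $A \times_X B \to A \times_Y B$ is a base change of the diagonal $\Delta_f$, so its connectivity (in cases (2), (3)) or truncatedness (in case (4)) is controlled by that of $f$. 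In case (2) one instead pulls $A,B$ back to $X$, multiplies there using $n$-regularity of $X$, and transports the result back along the $n$-connective map; in case (3) one multiplies over $Y$ using $n$-regularity of $Y$ and transports along the $n$-connective map $\Delta_f$-base change; in case (4) one first notes that $A \times_X B$ is $(n-2)$-regular because $X$ is $(n-1)$-regular, and then upgrades it to $(n-1)$-regular via the $(n-3)$-truncated comparison map and part (4) at level $n-1$.

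The main obstacle is the precise bookkeeping of connectivity and truncatedness indices: the argument hinges on the numerical coincidences that $\Delta_f$ is exactly $n$-connective when $f$ is $(n+1)$-connective and exactly $(n-3)$-truncated when $f$ is $(n-2)$-truncated, so that each comparison map lands in exactly the hypotheses of part (3) or part (4) one level down. I would organize the induction so that the level $n$ instance of each part invokes only level $n-1$ of the parts, never another part at level $n$, which keeps the argument free of circularity, and I would take care to argue the genuinely concrete case $n=0$ from the definition of $0$-regularity rather than from the inductive clause.
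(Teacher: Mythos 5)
Your proposal is correct and follows essentially the same route as the paper's proof: part (1) via transport of generating families along the (forgetful/base-change) functors, a joint induction for (2)--(4) whose base case $n=0$ uses the concrete description of $0$-regularity (your ``equivalence on $0$-truncations'' for (3) is the paper's equivalence of subobject posets along $1$-connective maps), and an inductive step comparing $X_0 \times_X X_1$ with $X_0 \times_Y X_1$ via base change of the diagonal $X \rightarrow X \times_Y X$, with exactly the connectivity/truncatedness bookkeeping you describe. Your explicit monotonicity remark (the empty product forcing $n$-regular $\Rightarrow$ $(n-1)$-regular) is used only implicitly in the paper, and making it explicit is a harmless clarification.
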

\begin{proof}
The forward direction of part (1) follows readily from the definitions. To prove the converse, we observe that the image of the forgetful functor $f_!: \Ccal_{/X} \rightarrow \Ccal_{/Y}$ is a generating subcategory, and hence $f_!$ maps generating subcategories to generating subcategories.

We now prove parts (2) and (3) jointly by induction on $n$. The case $n = 0$ of (3) follows from the fact that pullback along $1$-connective morphisms induces equivalences between posets of subobjects.  Consider now the case $n = 0$ of (2). Assume given a jointly effectively epimorphic family $g_\alpha: Y_\alpha \rightarrow Y$. Then the base changed family $g'_\alpha: X \times_Y Y_\alpha \rightarrow X$ is jointly effectively epimorphic. Since $X$ is $0$-regular then there exists an index $\alpha$ such that $g'_\alpha$ is an effective epimorphism. The composition of $g'_\alpha$ with the effective epimorphism $f$ is an effective epimorphism which factors through $f_\alpha$, and hence $f_\alpha$ is an effective epimorphism. 
 
 Assume now that $n \geq 1$ and that (2) and (3) are known  for $n-1$. We first prove (2). Applying part (1) we see that $Y$ is locally $(n-1)$-regular. Furthermore, by inductive hypothesis we have that $Y$ is $(n-1)$-regular. It remains to show that if  $Y_0 \rightarrow Y$ and $Y_1 \rightarrow Y$ are two $(n-1)$-regular objects with maps to $Y$ then $Y_0 \times_Y Y_1$ is $(n-1)$-regular. Let $X_0$ and $X_1$ be the base changes of $Y_0$ and $Y_1$ to $X$. There are $n$-connective maps $X_0 \rightarrow Y_0$ and $X_1 \rightarrow Y_1$. By inductive hypothesis we see that $X_0$ and $X_1$ are $(n-1)$-regular. Since $X$ is $n$-regular we have that $X_0  \times_X X_1$ is $(n-1)$-regular. Now $Y_0 \times_Y Y_1$ receives an $n$-connective map from $X_0 \times_X X_1$, and is therefore $(n-1)$-regular by our inductive hypothesis.
 
 We now prove (3). Applying part (1) we see that $X$ is locally $(n-1)$-regular. Furthermore, by inductive hypothesis we have that $X$ is $(n-1)$-regular. It remains to show that if $ X_0 \rightarrow X$ and $X_1 \rightarrow X$ are two $(n-1)$-regular objects with maps to $X$, then $X_0 \times_X X_1$ is $(n-1)$-regular. Since $Y$ is $n$-regular we have that $X_0 \times_Y X_1$ is $(n-1)$-regular. The claim now follows from the inductive hypothesis using the fact that the map $X_0 \times_X X_1 \rightarrow X_0 \times_Y X_1$ is a base change of the $n$-connective morphism $X \rightarrow X\times_Y X$.

  We now prove part (4). We argue by induction on $n$. The case $n = 0$ is clear. Assume now that $n \geq 1$ and that the assertion is known for $n-1$. Applying part (1) we see that $X$ is locally $(n-1)$-regular. It remains to show that if $X_0 \rightarrow X$ and $X_1 \rightarrow X$ are two $(n-1)$-regular objects with maps to $X$, then $X_0 \times_X X_1$ is $(n-1)$-regular. Since $X$ is $(n-1)$-regular we have that $X_0 \times_X X_1$ is $(n-2)$-regular, and since $Y$ is $n$-regular we have that $X_0 \times_Y X_1$ is $(n-1)$-regular. Our claim now follows by inductive hypothesis using the fact that  $X_0 \times_X X_1 \rightarrow X_0 \times_Y X_1$ is a base change of the $(n-3)$-truncated morphism $X \rightarrow X \times_Y X$.
  \end{proof}

\begin{lemma}\label{lemma regularity depends on truncationss}
Let $n \geq 0$ and let $F^*: \Ccal \rightarrow \Dcal$ be a colimit preserving, left exact functor of topoi. Assume that $\Ccal$ and $\Dcal$ are locally $(n-1)$-regular and that $F^*$ restricts to an equivalence between the full subcategories on the $n$-truncated objects. Then an object $X$ in $\Ccal$ is $n$-regular if and only if $F^*X$ is $n$-regular.
\end{lemma}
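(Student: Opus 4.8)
The plan is to argue by induction on $n \geq 0$, proving the statement first for $n$-truncated $X$ and then deducing the general case; the base case $n=0$ is treated separately at the end. For the reduction to the truncated case I would invoke \ref{lemma estimates regularity}: since the unit $X \rightarrow \tau_{\le n} X$ is $(n+1)$-connective, parts (2) and (3) of that lemma show that $X$ is $n$-regular if and only if $\tau_{\le n}X$ is $n$-regular, and likewise in $\Dcal$. As $F^*$ preserves effective epimorphisms, connective morphisms and $n$-truncated objects, the map $F^*X \rightarrow F^*\tau_{\le n}X$ is $(n+1)$-connective with $n$-truncated target, so $\tau_{\le n}F^* X \simeq F^* \tau_{\le n} X$; chaining these equivalences reduces the statement to the case where $X$ is $n$-truncated.

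So assume $X$ is $n$-truncated and consider the induced functor $\overline{F} \colon \Ccal_{/X} \rightarrow \Dcal_{/F^*X}$, $(Y \rightarrow X) \mapsto (F^*Y \rightarrow F^*X)$. Because colimits and effective epimorphisms in a slice are computed by the forgetful functor, and finite products are fibre products over the base, $\overline{F}$ is again colimit preserving and left exact. The key point is that, since $X$ is $n$-truncated, every $(n-1)$-truncated map $Y \rightarrow X$ has $n$-truncated source, so $F^*$ restricts to an equivalence between the $(n-1)$-truncated objects of $\Ccal_{/X}$ and those of $\Dcal_{/F^*X}$; that is, $\overline{F}$ satisfies the hypotheses of the lemma one level down. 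I would also record two easy facts: $(W \rightarrow X)$ is $k$-regular in $\Ccal_{/X}$ if and only if $W$ is $k$-regular in $\Ccal$ (since $(\Ccal_{/X})_{/(W \rightarrow X)} \simeq \Ccal_{/W}$), and local regularity is inherited by slices and monotone — if $\Ccal$ is locally $m$-regular then so is each $\Ccal_{/X}$ (cover the total space and push down), and every $m$-regular object is covered by $(m-1)$-regular ones (cover the terminal object of its slice, whose local $(m-1)$-regularity is part of the definition of $m$-regularity), whence locally $m$-regular implies locally $(m-1)$-regular. In particular both $\Ccal_{/X}$ and $\Dcal_{/F^*X}$ are locally $(n-2)$-regular, so the inductive hypothesis applies to $\overline{F}$.

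Now I would unwind the definition of $n$-regularity. By the inheritance fact the condition that $\Ccal_{/X}$ be locally $(n-1)$-regular holds automatically (from $\Ccal$ being locally $(n-1)$-regular), and likewise for $F^*X$; hence $X$ is $n$-regular if and only if the $(n-1)$-regular objects of $\Ccal_{/X}$ are closed under finite products, and similarly for $F^* X$. It remains to transfer this product condition across $\overline{F}$, and here I expect the main obstacle: $\overline{F}$ need not be essentially surjective, so one cannot simply pull back $(n-1)$-regular objects of $\Dcal_{/F^*X}$. I would resolve this by reducing to truncated representatives. Since $k$-truncation preserves finite products (a product of $(k+1)$-connective maps is again $(k+1)$-connective), an object is $(n-1)$-regular if and only if its $(n-1)$-truncation is, and $\tau_{\le n-1}(A \times_X B) \simeq \tau_{\le n-1}A \times_X \tau_{\le n-1}B$; thus closure under products need only be tested on $(n-1)$-truncated, $(n-1)$-regular objects $A',B'$, for which $A' \times_X B'$ is itself $(n-1)$-truncated. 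On such objects $\overline{F}$ is an equivalence and, by the inductive hypothesis, preserves and detects $(n-1)$-regularity; since it also preserves finite products and $(n-1)$-truncatedness, the product condition in $\Ccal_{/X}$ matches exactly the one in $\Dcal_{/F^*X}$, completing the inductive step.

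Finally the base case $n=0$ must be handled on its own, since $0$-regularity is defined through subobjects rather than by the inductive clause. After reducing to $0$-truncated $X$ as above, every subobject of $X$ is again $0$-truncated, and $F^*$ — being an equivalence on $0$-truncated objects that preserves monomorphisms and unions (images of coproducts) — induces an isomorphism of subobject lattices compatible with unions; hence $X$ is $0$-regular if and only if $F^* X$ is. The hardest part of the whole argument is the product-condition transfer in the inductive step, and the device that makes it go through is precisely the reduction to $(n-1)$-truncated representatives afforded by the fact that truncation commutes with finite products.
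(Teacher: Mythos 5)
Your proposal is correct and follows essentially the same route as the paper's proof: reduce to $n$-truncated $X$ via lemma \ref{lemma estimates regularity}, induct on $n$, and transfer the finite-product condition on $(n-1)$-regular objects by testing it only on $(n-1)$-truncated representatives, where the hypothesis gives an equivalence and the inductive hypothesis identifies $(n-1)$-regularity. The only difference is bookkeeping: the paper slices once at the start to make $X$ terminal and then runs the induction with the original functor $F^*$, whereas you carry the sliced functor $\overline{F}\colon \Ccal_{/X} \rightarrow \Dcal_{/F^*X}$ through each inductive step and verify its hypotheses there (including the base case via subobject lattices, which the paper dispatches as immediate since $0$-regularity only references $(-1)$-truncated objects).
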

\begin{proof}
By lemma \ref{lemma estimates regularity}, we may replace $X$ by $\tau_{\leq n}(X)$ and thus assume that $X$ is $n$-truncated. Replacing $\Ccal$ by $\Ccal_{/X}$ and $\Dcal$ by $\Dcal_{/F^*X}$, we may further reduce to the case when $X$ is terminal.

 We argue by induction on $n$. The case $n = 0$ is clear since the definition of $0$-regularity only makes reference to $(-1)$-truncated objects. Assume now that $n \geq 1$ and that the lemma is known for $n-1$. It follows from lemma \ref{lemma estimates regularity} that an object in $\Ccal$ is $(n-1)$-regular if and only if its image in $\Ccal_{\leq n-1}$  is $(n-1)$-regular. Hence $\Ccal$ is $n$-regular if and only if the collection of $(n-1)$-regular $(n-1)$-truncated objects in $\Ccal$ is closed under finite products. Similarly, $\Dcal$ is $n$-regular if and only if the collection of $(n-1)$-regular $(n-1)$-truncated objects in $\Dcal$ is closed under finite products. The lemma now follows directly by the inductive hypothesis.
\end{proof}

\begin{lemma}\label{lemma descend regularity}
Let $\Ccal$ be a topos and let $n \geq 0$. Assume given a generating subcategory $\Ccal_0$ of $\Ccal$ such that every object in $\Ccal_0$ is $(n-1)$-regular. Let $f: X \rightarrow Y$ be an effective epimorphism in $\Ccal$ where $X$ is $n$-regular. Then $Y$ is $n$-regular if and only if for every $Z$ in $\Ccal_0$ and every morphism $Z \rightarrow Y$ the object $X \times_Y Z$ is $(n-1)$-regular. 
\end{lemma}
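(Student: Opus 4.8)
The plan is to argue by induction on $n$, taking the statement of the lemma for $n-1$ (over all topoi and all admissible generating subcategories) as the inductive hypothesis. I will use throughout that the $m$-regularity of an object depends only on its slice topos, together with the observation that $m$-regularity implies $(m-1)$-regularity: since the empty product is an instance of a finite product, an $m$-regular object $W$ makes the terminal object of $\Ccal_{/W}$ an $(m-1)$-regular object, and that terminal object is $W$ itself. The base case $n=0$ is immediate: the right-hand condition is vacuous (every object is $(-1)$-regular), and $Y$ is $0$-regular by part (2) of lemma \ref{lemma estimates regularity}, as $f$ is an effective epimorphism, hence $0$-connective, with $0$-regular source.

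For the forward direction I would argue as follows. Suppose $Y$ is $n$-regular, and let $Z\in\Ccal_0$ with a map $Z\to Y$. Then $X\to Y$ is an $(n-1)$-regular object of $\Ccal_{/Y}$ (because $X$ is $n$-regular, hence $(n-1)$-regular) and $Z\to Y$ is an $(n-1)$-regular object of $\Ccal_{/Y}$ (because $Z$ lies in $\Ccal_0$). Since $Y$ is $n$-regular, the $(n-1)$-regular objects of $\Ccal_{/Y}$ are closed under finite products, so their product $X\times_Y Z\to Y$ is $(n-1)$-regular, as desired.

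The reverse direction is the main content, and I would organize it into three applications of the inductive hypothesis. First, applying the lemma for $n-1$ to the effective epimorphism $f\colon X\to Y$ shows that $Y$ is $(n-1)$-regular: the source $X$ is $(n-1)$-regular, $\Ccal_0$ is a generating subcategory of $(n-2)$-regular objects, and the required condition holds because each $X\times_Y Z$ is $(n-1)$-regular (hence $(n-2)$-regular) by assumption. Second, I would show that $X\times_Y A$ is $(n-1)$-regular for every $(n-1)$-regular $A\to Y$. As $A$ is $0$-regular it admits an effective epimorphism $Z_A\to A$ with $Z_A\in\Ccal_0$, and base changing along $X\times_Y A\to A$ yields an effective epimorphism $X\times_Y Z_A\to X\times_Y A$ with $(n-1)$-regular source. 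To apply the lemma for $n-1$ it remains to check that, for $G\in\Ccal_0$ over $X\times_Y A$, the fiber product $(X\times_Y Z_A)\times_{X\times_Y A}G\simeq G\times_A Z_A$ is $(n-2)$-regular; this holds because $A$ is $(n-1)$-regular, so the $(n-2)$-regular objects of $\Ccal_{/A}$ are closed under finite products, and both $G\to A$ and $Z_A\to A$ are $(n-2)$-regular.

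Finally, given $(n-1)$-regular objects $A,B\to Y$, the objects $X\times_Y A\to X$ and $X\times_Y B\to X$ are $(n-1)$-regular in $\Ccal_{/X}$, so their product $X\times_Y A\times_Y B\to X$ is $(n-1)$-regular because $X$ is $n$-regular. The base-changed effective epimorphism $X\times_Y(A\times_Y B)\to A\times_Y B$ then has $(n-1)$-regular source, and since $(X\times_Y(A\times_Y B))\times_{A\times_Y B}G\simeq X\times_Y G$ is $(n-1)$-regular by assumption, the lemma for $n-1$ gives that $A\times_Y B$ is $(n-1)$-regular. Combined with the first step, which supplies the empty product, this shows that the $(n-1)$-regular objects of $\Ccal_{/Y}$ are closed under finite products; as $\Ccal_{/Y}$ is automatically locally $(n-1)$-regular (it is generated by the $(n-1)$-regular objects $Z\to Y$ with $Z\in\Ccal_0$), we conclude that $Y$ is $n$-regular. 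The one genuine obstacle is the repeated verification of the inductive condition, namely that the relevant fiber products are $(n-2)$-regular; the key point is that, by pasting of pullback squares, each such fiber product simplifies either to a product of $(n-2)$-regular objects over the $(n-1)$-regular object $A$, or directly to one of the objects $X\times_Y G$ governed by the hypothesis.
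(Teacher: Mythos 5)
Your proof is correct and takes essentially the same route as the paper's: induction on $n$, with the key intermediate claim that $X \times_Y A$ is $(n-1)$-regular for each $(n-1)$-regular $A \rightarrow Y$ (established by the same reduction to the fiber product $G \times_A Z_A$ over the $(n-1)$-regular base $A$), followed by a final application of the inductive hypothesis to the base-changed effective epimorphism $X \times_Y A \times_Y B \rightarrow A \times_Y B$, using the $n$-regularity of $X$. The only differences are cosmetic; for instance, you make explicit the implication from $m$-regularity to $(m-1)$-regularity via the empty product, which the paper uses tacitly, and you verify the final $(n-2)$-regularity condition directly from the lemma's hypothesis rather than from the $(n-1)$-regularity of $Y$.
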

\begin{proof}
The only if direction is clear from the definitions. We will prove the if direction by induction on $n$. The case $n = 0$ follows from part (2) of lemma \ref{lemma estimates regularity}. Assume now that $n \geq 1$ and that the assertion is known for $n-1$. By the inductive hypothesis we have that $Y$ is $(n-1)$-regular. Furthermore, applying part (1) of lemma \ref{lemma estimates regularity} we see that $Y$ is locally $(n-1)$-regular. It remains to show that if $Y_0 \rightarrow Y$ and $Y_1 \rightarrow Y$ are two $(n-1)$-regular objects with maps to $Y$, then $Y_0 \times_Y Y_1$ is $(n-1)$-regular. 

We first claim that $Y_0 \times_Y X$ is $(n-1)$-regular. Since $Y_0$ is $0$-regular we may pick an effective epimorphism $Y'_0 \rightarrow Y_0$ with $Y'_0$ in $\Ccal_0$. Then the induced map $Y'_0 \times_Y X \rightarrow Y_0 \times_Y X$ is an effective epimorphism. By our assumption, $Y'_0 \times_Y X$ is $(n-1)$-regular. Furthermore, given a map $Z \rightarrow Y_0 \times_Y X$ with $Z$ in $\Ccal_0$, we have
\[
 Z \times_{Y_0 \times_Y X} (Y'_0 \times_Y X) = Z \times_{Y_0} Y'_0
\]
which is $(n-2)$-regular since $Y_0$ is $(n-1)$-regular. By our inductive hypothesis we see that $Y_0 \times_Y X$ is $(n-1)$-regular. Similarly, we conclude that $Y_1 \times_Y X$ is also $(n-1)$-regular. 

Since $X$ is $n$-regular we have that $Y_0 \times_Y Y_1 \times_Y X$ is also $(n-1)$-regular. The base change of $f$ induces an effective epimorphism $Y_0 \times_Y Y_1 \times_Y X \rightarrow Y_0 \times_Y Y_1$. Applying our inductive hypothesis once more, we reduce to showing that for every map $Z \rightarrow Y_0 \times_Y Y_1$ with $Z$ in $\Ccal_0$, the fiber product
\[
Z \times_{Y_0 \times_Y Y_1} (Y_0 \times_Y Y_1 \times_Y X)
\]
is $(n-2)$-regular. Indeed, the above is equivalent to $Z \times_Y X$ and is $(n-2)$-regular since $Y$ is $(n-1)$-regular.
\end{proof}

\begin{lemma}\label{lemma descend regularity 2}
Let $\Ccal$ be a topos and let $n \geq 0$. Let $f: X \rightarrow Y$ be an effective epimorphism in $\Ccal$, where $X$ is $n$-regular. Then $Y$ is $n$-regular if and only if $X \times_Y X$ is $(n-1)$-regular.
\end{lemma}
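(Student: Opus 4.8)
The plan is to deduce both directions from lemma \ref{lemma descend regularity}, which reduces the $n$-regularity of $Y$ to a condition on fiber products $X \times_Y Z$ as $Z$ ranges over a generating subcategory of $(n-1)$-regular objects. For the \emph{only if} direction I would first record that an $n$-regular object is automatically $(n-1)$-regular: for $n \geq 1$ the $(n-1)$-regular objects of $\Ccal_{/X}$ are closed under finite products, hence contain the empty product, namely the terminal object of $\Ccal_{/X}$, so that $X$ itself is $(n-1)$-regular (the case $n=0$ being vacuous). Granting this, if $Y$ is $n$-regular then $(f: X \to Y)$ is an $(n-1)$-regular object of $\Ccal_{/Y}$, and since $Y$ is $n$-regular the collection of $(n-1)$-regular objects of $\Ccal_{/Y}$ is closed under finite products; the product of $(f: X \to Y)$ with itself is $X \times_Y X$, which is therefore $(n-1)$-regular.

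For the \emph{if} direction I would replace $\Ccal$ by $\Ccal_{/Y}$, so that $Y$ becomes terminal; all the hypotheses and the conclusion depend only on slices of $\Ccal$ that are unaffected by this replacement. The heart of the argument is the choice of generating subcategory. Since $X$ is $n$-regular, $\Ccal_{/X}$ is locally $(n-1)$-regular, so its $(n-1)$-regular objects form a generating subcategory $\mathcal{G}$. As $f$ is an effective epimorphism, the forgetful functor $f_!: \Ccal_{/X} \to \Ccal$ carries generating subcategories to generating subcategories (as in the proof of part (1) of lemma \ref{lemma estimates regularity}); hence $\Ccal_0 := f_!(\mathcal{G})$ is a generating subcategory of $\Ccal$ consisting of $(n-1)$-regular objects, each of which comes equipped with a map to $X$.

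It then remains, by lemma \ref{lemma descend regularity}, to verify that $X \times_Y Z$ is $(n-1)$-regular for every $Z = f_!(U \to X)$ in $\Ccal_0$. Here there is a canonical identification $X \times_Y Z \simeq (X \times_Y X) \times_X U$, where $X \times_Y X \to X$ is the second projection; both $X \times_Y X$ and $U$ are $(n-1)$-regular objects of $\Ccal_{/X}$ (the former by hypothesis, the latter by construction), and since $X$ is $n$-regular these are closed under finite products, the relevant product being exactly $(X \times_Y X) \times_X U$. Thus $X \times_Y Z$ is $(n-1)$-regular, and lemma \ref{lemma descend regularity} yields that $Y$ is $n$-regular. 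I expect the main obstacle to be precisely this choice of $\Ccal_0$: a generating subcategory of \emph{all} $(n-1)$-regular objects would not come with maps to $X$, so one could not rewrite $X \times_Y Z$ as a fiber product over $X$ and exploit the hypothesis on $X \times_Y X$ together with the product-closure furnished by the $n$-regularity of $X$; pushing the generators of $\Ccal_{/X}$ forward along $f_!$ is what makes this possible.
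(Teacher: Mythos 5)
Your proof is correct and follows essentially the same route as the paper's: the same reduction to $Y$ terminal, the same generating subcategory $\Ccal_0 = f_!(\mathcal{G})$ obtained by pushing forward the $(n-1)$-regular objects of $\Ccal_{/X}$, the same rewriting $X \times Z \simeq Z \times_X (X \times X)$, and the same appeal to lemma \ref{lemma descend regularity} and to the $n$-regularity of $X$. The only difference is that you spell out the ``only if'' direction (which the paper dismisses as clear), and your argument there is fine, granting the standing convention that closure under finite products includes the empty product, which is what makes $n$-regularity imply $(n-1)$-regularity throughout the paper.
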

\begin{proof}
The only if direction is clear. We now prove the if direction. Replacing $\Ccal$ by $\Ccal_{/Y}$ we may reduce to the case when $Y$ is terminal. Let $\Ccal_0$ be the image under the forgetful functor $\Ccal_{/X} \rightarrow \Ccal$ of the collection of $(n-1)$-regular objects of $\Ccal_{/X}$. As in the proof of lemma \ref{lemma estimates regularity} part (1) we see that $\Ccal_0$ is generating. Applying lemma \ref{lemma descend regularity} we may reduce to showing that if $Z$ belongs to $\Ccal_0$ then $Z \times X$ is $(n-1)$-regular. Since $Z$ belongs to $\Ccal_0$, there exists a map $Z \rightarrow X$, and we may write $Z \times X = Z \times_X (X \times X)$. By our assumption we have that $X \times X$ is $(n-1)$-regular. Our claim now follows from the fact that $X$ is $n$-regular.
\end{proof}

\begin{lemma}\label{lemma check regular on generating subcat}
Let $\Ccal$ be a topos and let $n \geq 0$. Let $\Ccal_0$ be a generating subcategory and assume that every object of $\Ccal_0$ is $(n-1)$-regular. Then $\Ccal$ is $n$-regular if and only if it is $0$-regular and for every pair of objects $X, Y$ in $\Ccal_0$, the product $X \times Y$ is $(n-1)$-regular.
\end{lemma}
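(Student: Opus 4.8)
\emph{Proof proposal.} The two hypotheses in the statement are exactly the nullary and binary instances of closure under finite products at level $n-1$, so the plan is to show they suffice to recover the full definition of $n$-regularity. First I would record the monotonicity principle that an $m$-regular object is $(m-1)$-regular: $m$-regularity of $X$ asserts that the $(m-1)$-regular objects of $\Ccal_{/X}$ are closed under finite products, and the empty product is the terminal object of $\Ccal_{/X}$, whose $(m-1)$-regularity is precisely the statement that $X$ is $(m-1)$-regular. The ``only if'' direction is then immediate: if $\Ccal$ is $n$-regular, iterating monotonicity shows $\Ccal$ is $0$-regular, while closure under finite products applied to the objects $X, Y \in \Ccal_0$ (which are $(n-1)$-regular by hypothesis) shows that $X \times Y$ is $(n-1)$-regular.

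For the ``if'' direction I would argue by induction on $n$, the case $n = 0$ being vacuous. Assuming the lemma for $n-1$, I observe that every object of $\Ccal_0$ is $(n-1)$-regular, hence $(n-2)$-regular, and every product $X \times Y$ with $X, Y \in \Ccal_0$ is $(n-1)$-regular, hence $(n-2)$-regular; thus the inductive hypothesis applies and shows that $\Ccal$ is $(n-1)$-regular. In particular its terminal object is $(n-1)$-regular, which disposes of the nullary case of closure under products. Moreover $\Ccal$ is automatically locally $(n-1)$-regular, since $\Ccal_0$ is a generating family consisting of $(n-1)$-regular objects. Hence it remains only to verify that the $(n-1)$-regular objects of $\Ccal$ are closed under binary products.

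So let $A$ and $B$ be $(n-1)$-regular. Being $0$-regular, each admits an effective epimorphism from an object of $\Ccal_0$, say $A' \to A$ and $B' \to B$; then $A' \times B' \to A \times B$ is an effective epimorphism whose source is $(n-1)$-regular by hypothesis. By lemma \ref{lemma descend regularity 2} it therefore suffices to check that the fiber product $(A' \times B') \times_{A \times B} (A' \times B')$ is $(n-2)$-regular. The key computation is the identification
\[
(A' \times B') \times_{A \times B} (A' \times B') \simeq (A' \times_A A') \times (B' \times_B B'),
\]
which reduces the problem to two fiber products of the previous type. Applying the ``only if'' direction of lemma \ref{lemma descend regularity 2} to $A' \to A$ and $B' \to B$ (using that $A$ and $B$ are already known to be $(n-1)$-regular) shows that $A' \times_A A'$ and $B' \times_B B'$ are each $(n-2)$-regular, and since $\Ccal$ is $(n-1)$-regular their product is again $(n-2)$-regular. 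This completes the induction.

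The main obstacle is the backward direction, and within it the passage from the generating subcategory $\Ccal_0$ to arbitrary $(n-1)$-regular objects: one cannot prove closure under products directly, but must descend along effective epimorphisms from $\Ccal_0$ twice --- once to reduce $A \times B$ to $A' \times B'$, and once more, via the product decomposition of the relative \v{C}ech object displayed above, to reduce the resulting fiber products to the hypothesis on pairwise products in $\Ccal_0$. Setting this up requires first securing the $(n-1)$-regularity of $\Ccal$ from the inductive hypothesis, since closure of the $(n-2)$-regular objects under products is exactly what makes the final step go through.
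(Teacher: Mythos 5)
Your proof is correct, and its overall skeleton matches the paper's: induction on $n$, with local $(n-1)$-regularity coming for free from the generating family, $(n-1)$-regularity of $\Ccal$ from the inductive hypothesis (the paper, like you, silently uses the monotonicity ``$m$-regular $\Rightarrow$ $(m-1)$-regular'' via the empty product; making it explicit is a point in your favor), and the remaining work being closure of $(n-1)$-regular objects under binary products. Where you diverge is the descent step. The paper replaces one factor at a time: it picks $X' \to X$ with $X' \in \Ccal_0$ and applies lemma \ref{lemma descend regularity}, the test-object criterion, using the computation $Z \times_{X \times Y}(X' \times Y) = Z \times_X X'$ for $Z \in \Ccal_0$ to reduce $X \times Y$ to $X' \times Y$, and then repeats for the second factor. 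You instead replace both factors simultaneously and apply the \v{C}ech criterion of lemma \ref{lemma descend regularity 2}, with the identification $(A' \times B') \times_{A \times B} (A' \times B') \simeq (A' \times_A A') \times (B' \times_B B')$ doing the work; the two diagonal factors are $(n-2)$-regular because $A$ and $B$ are $(n-1)$-regular, and their product is handled by the already-established $(n-1)$-regularity of $\Ccal$. Your route is more symmetric and needs only the simpler diagonal lemma, at the cost of one extra invocation of the ambient $(n-1)$-regularity; the paper's route stays within slice computations over a single factor and uses only the regularity of $X$ and $Y$ themselves in the verification, which is marginally more economical but requires the finer test-object lemma (which is, in any case, the engine behind lemma \ref{lemma descend regularity 2}). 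All quantifier bookkeeping in your argument checks out: the inductive step has $n \geq 1$, so $A$ and $B$ are $0$-regular and admit effective epimorphisms from $\Ccal_0$, and both applications of lemma \ref{lemma descend regularity 2} occur at level $n-1 \geq 0$ with the source $A' \times B'$ being $(n-1)$-regular exactly by the hypothesis on pairwise products in $\Ccal_0$.
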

\begin{proof}
The only if direction is clear so we focus on the if direction. We argue by induction on $n$. The case $n = 0$ is clear, so assume that $n \geq 1$ and that the assertion is known for $n-1$. Our assumptions guarantee that $\Ccal$ is locally $(n-1)$-regular and the inductive hypothesis implies that $\Ccal$ is $(n-1)$-regular. It remains to show that if $X$ and $Y$ are two $(n-1)$-regular objects of $\Ccal$ then $X \times Y$ is $(n-1)$-regular. Pick an effective epimorphism $X' \rightarrow X$ with $X'$ in $\Ccal_0$. Then the induced map $X' \times Y \rightarrow X \times Y$ is an effective epimorphism. Furthermore, for every morphism $Z \rightarrow X \times Y$ with $Z$ in $\Ccal_0$ we have $Z \times_{X \times Y} (X' \times Y) = Z \times_X X'$ which is $(n-2)$-regular by virtue of the fact that $X$ is $(n-1)$-regular. Applying lemma \ref{lemma descend regularity} we may reduce to showing that $X' \times Y$ is $(n-1)$-regular. Replacing $X$ by $X'$ if necessary we may now assume that $X$ belongs to $\Ccal_0$. A similar argument shows that we may further reduce to the case when $Y$ belongs to $\Ccal_0$. The desired assertion now follows from our hypothesis.
\end{proof}

\begin{proof}[Proof of theorem \ref{teo properties regularity}]
Part (1) follows as in the proof of lemma \ref{lemma estimates regularity} part (1).  
 We now prove part (2) . We will show that if $X$ is $n$-regular for some $n \geq 0$ then $X$ is $n$-coherent, by induction on $n$. The case $n = 0$ is clear. Assume now that $n \geq 1$ and the assertion is known to hold for $n-1$. Since $X$ is $(n-1)$-regular and locally $(n-1)$-regular we see by the inductive hypothesis that $X$ is $(n-1)$-coherent and locally $(n-1)$-coherent. To show that $X$  is $n$-coherent it remains to show that if $U\rightarrow X$ and $V\rightarrow X$ are $(n-1)$-coherent objects with maps to  $X$, then $U \times_X V$ is $(n-1)$-coherent. Pick an effective epimorphism $f: \coprod_\alpha U_\alpha \rightarrow U$, where $U_\alpha$ is $(n-1)$-regular for all $\alpha$. Since $U$ is $(n-1)$-coherent, it is in particular quasicompact, so we may assume that the covering family is finite. By induction, $\coprod_\alpha U_\alpha$ is $(n-1)$-coherent, and hence $f$ is relatively $(n-2)$-coherent.  Applying \cite{SAG} proposition A.2.1.3, we may reduce to showing that $\coprod_\alpha U_\alpha \times_X V$ is $(n-1)$-coherent. This is the same as $\coprod_\alpha (U_\alpha \times_X V)$, so replacing $U$ by $U_\alpha$ we may now reduce to the case when $U$ is $(n-1)$-regular. Similarly, we may further reduce to the case when $V$ is $(n-1)$-regular. The fact that $X$ is $n$-regular implies that $U \times_X V$ is $(n-1)$-regular, and hence $(n-1)$-coherent by inductive hypothesis.
 
 Part (3) follows from part (2) together with \cite{SAG} proposition A.2.3.1. Part (4) follows directly from the definitions. We now prove part (5). Assume first that $X$ is regular and let $k \geq -2$. Then the map $X \rightarrow \tau_{\leq k}(X)$ is $(k+1)$-connective, so by lemma \ref{lemma estimates regularity} part (2) we have that $\tau_{\leq k}(X)$ is $(k+1)$-regular. Applying lemma \ref{lemma estimates regularity} part (4) inductively we see that $\tau_{\leq k}(X)$  is $n$-regular for all $n$, and hence $\tau_{\leq k}(X)$ is regular. Conversely, assume that $\tau_{\leq k}(X)$ is regular for all $k$. Applying lemma \ref{lemma estimates regularity} part (3) to the $(k+1)$-connective map $X \rightarrow \tau_{\leq k}(X)$ we see that $X$ is $k$-regular. This holds for all $k$, and therefore $X$ is regular.
 
 Part (6) is a direct consequence of lemma \ref{lemma descend regularity 2}.  We now prove part (7). It suffices to show that an object in $\Ccal$ is $n$-regular if and only if its image in $\Ccal^\hyp$ is $n$-regular, for all $n \geq -1$. We prove this by induction on $n$. The case $n = -1$ is vacuous. Assume now that $n \geq 0$ and that the assertion is known for $n-1$. Since $\Ccal$ is regular, the class of $(n-1)$-regular objects in $\Ccal$ is generating, and therefore its image in $\Ccal^\hyp$ is a generating subcategory of $\Ccal^\hyp$. By inductive hypothesis, we see that $\Ccal^\hyp$ is locally $(n-1)$-regular. Our claim now follows by an application of lemma \ref{lemma regularity depends on truncationss}.
 
   We now prove part (8). We prove that $X$ is $n$-regular for all $n \geq 0$, by induction on $n$. The case $n = 0$ follows from the fact that $X_0$ is $0$-regular and the map $X_0 \rightarrow X$ is an effective epimorphism. Assume now that $n \geq 1$ and that the assertion is known for $n-1$. By lemma \ref{lemma descend regularity 2} we may reduce to showing that $X_0 \times_X X_0$ is $(n-1)$-regular. Let $X_\bullet^+$ be the semisimplicial object obtained from $X_\bullet$ by composing with the functor $\Delta^\op_{\text{s}} \rightarrow \Delta^\op_{\text{s}}$ sending $[m]$ to $[m] \star [0]$. Observe that we have  a projection $X_\bullet^+ \rightarrow X_\bullet$ which fits into a morphism of augmented semisimplicial objects whose value on the cone point is the projection $X_0 \rightarrow X$. Applying a combination of \cite{SAG} theorem A.5.4.1,  lemma A.5.5.3 and lemma A.5.5.4 we see that $X_\bullet^+ \times_{X_\bullet} X_\bullet^+$ is a Kan semisimplicial object, and furthermore the canonical map
   \[
   | X_\bullet^+ \times_{X_\bullet} X_\bullet^+| \rightarrow X_0 \times_X X_0
   \]
   is $\infty$-connective. Applying lemma \ref{lemma estimates regularity} part (2) we reduce to showing that  $| X_\bullet^+ \times_{X_\bullet} X_\bullet^+|$ is $(n-1)$-regular. This follows by our inductive hypothesis, in light of part (4).
   
The if direction of (9) follows from part (8). We now prove the only if direction. Let $X$ be a regular object of $\Ccal$. Replacing $\Ccal$ by $\Ccal_{/X}$ we may reduce to the case when $X = 1$ is the terminal object. We construct a hypercover $S_\bullet : \Delta^{\op}_{\text{s}} \rightarrow \Ccal_0$ inductively. Let $m \geq 0$ and assume that the restriction of $S_\bullet$ to $\Delta^{\op}_{\text{s}, < m}$ has been constructed. Combining (4) with the fact that the terminal object of $\Ccal$ is regular, we deduce that the collection of regular objects in $\Ccal$ is closed under finite limits. It follows that the matching object $S(\partial [m])$ is regular.  Since $\Ccal_0$ is generating, we may pick an effective epimorphism $f: U \rightarrow S(\partial [m])$ with $U$ in $\Ccal_0$.  The inductive step concludes by setting $S_m = U$, with attaching map given by $f$.

Finally, we prove part (10). We will show that every object of $\Ccal_0$ is $n$-regular for all $n \geq 0$, by induction on $n$. The case $n = 0$ is clear, so assume that $n \geq 1$ and that the assertion is known for $n-1$. Let $X$ be an object in $\Ccal_0$. Then $(\Ccal_0)_{/X}$ is a generating subcategory of $\Ccal_{/X}$ consisting of $(n-1)$-regular objects. By lemma \ref{lemma check regular on generating subcat}, to show that $X$ is $n$-regular it suffices to show that $(\Ccal_0)_{/X}$ is closed under finite products inside $\Ccal_{/X}$. This follows directly from the fact that $\Ccal_0$ is closed under fiber products.  
  \end{proof}


 \subsection{Singleton Grothendieck topologies}\label{subsection singleton}
 
 We now discuss a class of Grothendieck topologies which give rise to regular topoi.

\begin{definition}\label{definition singleton topology}
Let $\Ccal$ be a category with pullbacks. We say that a Grothendieck topology on $\Ccal$ is singleton if every covering sieve contains a morphism which itself generates a covering sieve.
\end{definition}

\begin{example}
Let $\ccal$ be a regular category. Then the regular topology on $\ccal$   is singleton.
\end{example}

Recall that if $\ccal$ is a category equipped with a Grothendieck topology then a morphism in $\Pcal(\ccal)$ is said to be a local effective epimorphism  if its projection to $\Sh(\ccal)$ is an effective epimorphism. Definition \ref{definition singleton topology} may be rephrased by saying that  the topology on $\ccal$ is singleton if and only if every covering sieve contains a local effective epimorphism.

\begin{proposition}\label{prop singleton topology}
Let $\Ccal$ be a category with pullbacks, equipped with a subcanonical singleton Grothendieck topology. Then every representable sheaf is regular. In particular, $\Sh(\Ccal)$ is locally regular and is regular provided that $\Ccal$ admits a terminal object.
\end{proposition}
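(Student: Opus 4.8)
The plan is to deduce the statement from part (10) of theorem \ref{teo properties regularity}, applied to the topos $\Sh(\Ccal)$ together with the full subcategory $\Ccal_0 \subseteq \Sh(\Ccal)$ spanned by the representable sheaves. Two of the three hypotheses of that part are formal. First, $\Ccal_0$ is a generating subcategory, since every sheaf is a colimit of representables and hence receives an effective epimorphism from a coproduct of objects of $\Ccal_0$. Second, $\Ccal_0$ is closed under fiber products: the Yoneda embedding $\Ccal \to \Pcal(\Ccal)$ preserves pullbacks, representable presheaves are sheaves because the topology is subcanonical, and the inclusion $\Sh(\Ccal) \hookrightarrow \Pcal(\Ccal)$ preserves limits; combining these, a fiber product of representable sheaves computed in $\Sh(\Ccal)$ agrees with its fiber product in $\Pcal(\Ccal)$ and is therefore again representable. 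It thus remains to verify the remaining hypothesis, namely that every representable sheaf is $0$-regular.

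This is the heart of the argument and the only place where the singleton condition intervenes. Fix $c$ in $\Ccal$. By the very definition of $0$-regularity (definition \ref{definition regular topos}), it suffices to show that if $\{U_\alpha \hookrightarrow h_c\}$ is a family of subobjects whose union is $h_c$, then $U_\alpha = h_c$ for some $\alpha$. I would invoke the standard identification of the poset of subobjects of $h_c$ in $\Sh(\Ccal)$ with the poset of closed sieves on $c$, under which $U_\alpha$ corresponds to a closed sieve $S_\alpha$ and joins are computed as the closure of the union of sieves. The hypothesis that the union of the $U_\alpha$ equals $h_c$ then becomes the statement that the sieve $\bigcup_\alpha S_\alpha$ is a covering sieve of $c$. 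The singleton condition of definition \ref{definition singleton topology} now supplies a morphism $g: d \to c$ belonging to $\bigcup_\alpha S_\alpha$ which itself generates a covering sieve. Choosing $\alpha$ with $g \in S_\alpha$, the sieve $S_\alpha$ contains the covering sieve generated by $g$ and is therefore itself covering; being both closed and covering, it is the maximal sieve, so that $U_\alpha = h_c$, as desired.

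With all three hypotheses in place, part (10) of theorem \ref{teo properties regularity} yields that every object of $\Ccal_0$, that is, every representable sheaf, is regular. Since $\Ccal_0$ is generating, the regular objects of $\Sh(\Ccal)$ then form a generating family, so $\Sh(\Ccal)$ is locally regular. Finally, if $\Ccal$ admits a terminal object $1$, then $h_1$ is a terminal object of $\Sh(\Ccal)$ because the Yoneda embedding preserves limits; as $h_1$ is representable it is regular, whence $\Sh(\Ccal)$ is regular. I expect the main obstacle to be the $0$-regularity step of the second paragraph, where one must correctly match the lattice-theoretic union of subobjects with the covering sieve condition in order to bring the singleton hypothesis to bear.
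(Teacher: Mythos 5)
Your proposal is correct and takes essentially the same route as the paper: both reduce via theorem \ref{teo properties regularity} part (10) to showing that representable sheaves are $0$-regular (with the same formal verifications of generation and closure under fiber products), and both then deploy the singleton hypothesis to extract a single morphism landing in one subobject. The only difference is one of language in that key step — you identify subobjects of a representable with closed sieves and argue lattice-theoretically, while the paper forms the presheaf coproduct of the subobjects and uses its reformulation of the singleton condition in terms of local effective epimorphisms — but these are equivalent implementations of the same idea.
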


\begin{proof}
Since $\Ccal$ admits pullbacks, an application of theorem \ref{teo properties regularity} part (10) reduces us to showing that every representable sheaf is $0$-regular. Let $X$ be an object of $\Ccal$ and let $X_\alpha$ be a cover of $X$ by subobjects. Let $X'$ be the disjoint union of the objects $X_\alpha$ computed in $\Pcal(\Ccal)$. Then the  projection $X' \rightarrow X$ is a local effective epimorphism. Since the topology on $\Ccal$ is singleton, there exists an object $Y$ in $\Ccal$ and a local effective epimorphism $p: Y \rightarrow X$  which factors through $X'$. Then $p$ factors through $X_\alpha$ for some $\alpha$, so we see that the inclusion $X_\alpha \rightarrow X$ is a local effective epimorphism, and hence an isomorphism.
\end{proof}

\begin{proposition}\label{prop describe hypercompletes}
Let $\Ccal$ be a category with pullbacks, equipped with a subcanonical singleton Grothendieck topology. Let $F$ be a presheaf on $\ccal$.
\begin{enumerate}[\normalfont (1)]
\item $F$ is a sheaf if and only if for every local effective epimorphism $f: X_0 \rightarrow X$ in $\Ccal$ with \v{C}ech nerve $X_\bullet$, the map $F(X) \rightarrow \Tot F(X_\bullet)$ is an isomorphism. 
\item $F$   is a hypercomplete sheaf if and only if for every object $X$ in $\Ccal$ and every semisimplicial hypercover $X_\bullet$ of $X$ inside $\Ccal$, the map $F(X) \rightarrow \Tot F(X_\bullet)$ is an equivalence.
\end{enumerate}
\end{proposition}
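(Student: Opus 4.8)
The plan is to reformulate both assertions as locality conditions in the presheaf $\infty$-topos $\Pcal(\Ccal)$ for explicit classes of maps, and then to identify the localizations these classes generate with $\Sh(\Ccal)$ and its hypercompletion. Write $j$ for the Yoneda embedding. Since the topology is subcanonical and $\Ccal$ has pullbacks, $j$ preserves the finite limits in sight, so that Čech nerves and semisimplicial hypercovers formed in $\Ccal$ agree with those formed in $\Sh(\Ccal)$; moreover $F(X) = \operatorname{Map}_{\Pcal(\Ccal)}(j(X), F)$ for every presheaf $F$ and object $X$.

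For part (1) the forward direction is immediate: if $F$ is a sheaf and $f\colon X_0 \to X$ is a local effective epimorphism with Čech nerve $X_\bullet$, then $f$ becomes an effective epimorphism in $\Sh(\Ccal)$, whence $j(X) \simeq |j(X_\bullet)|$ there; applying $\operatorname{Map}_{\Sh(\Ccal)}(-,F)$ and using that each $j(X_n)$ is a sheaf gives $F(X) \simeq \Tot F(X_\bullet)$. The converse rests on the observation that in the $\infty$-topos $\Pcal(\Ccal)$ the realization $|j(X_\bullet)|$ is the image of $j(X_0) \to j(X)$, that is, the sieve $S_f$ generated by $f$, and that $|j(X_\bullet)| \to j(X)$ is the monomorphism $S_f \hookrightarrow j(X)$. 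Thus the Čech descent condition for $f$ is exactly the statement that $F$ is local with respect to $S_f \hookrightarrow j(X)$, since $\operatorname{Map}(|j(X_\bullet)|, F) = \Tot F(X_\bullet)$.

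Part (1) therefore reduces to showing that a presheaf is a sheaf if and only if it is local with respect to the set $S' = \{\, S_f \hookrightarrow j(X)\,\}$, where $f$ ranges over local effective epimorphisms. Each member of $S'$ is a monomorphism, and because local effective epimorphisms are stable under base change so is $S'$; hence the left exact localization it generates is topological and coincides with the sheaf topos for a Grothendieck topology $\tau'$ whose covering sieves are those generated by the principal sieves $S_f$. The crux is the identification $\tau' = \tau$: each $S_f$ is $\tau$-covering because $f$ is a local effective epimorphism, and conversely the singleton hypothesis says precisely that every $\tau$-covering sieve contains some such $S_f$, so $\tau$ is generated by the $S_f$ and $\tau' = \tau$. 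This identification, together with the image computation above, is the heart of part (1).

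For part (2) the forward direction is again easy: by definition \ref{definition hypercover} the augmentation $|j(X_\bullet)| \to j(X)$ of a semisimplicial hypercover is $\infty$-connective in $\Sh(\Ccal)$, and a hypercomplete sheaf inverts $\infty$-connective maps, giving $F(X) \simeq \Tot F(X_\bullet)$. For the converse I would argue in two steps. First, $F$ is a sheaf: the underlying semisimplicial object of the Čech nerve of a local effective epimorphism is a semisimplicial hypercover, and since the totalization of a cosimplicial object may be computed over the wide subcategory $\Delta_{\text{s}} \subseteq \Delta$ of injective maps (see \cite{SAG} §A.5), the hypothesis furnishes the Čech descent condition, so $F$ is a sheaf by part (1). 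Second, $F$ is hypercomplete: here I would invoke the description of hypercomplete sheaves as those satisfying descent against all hypercovers, and use that—because the topology is singleton and every representable is regular, hence generating, by proposition \ref{prop singleton topology}—hyperdescent may be tested on hypercovers whose terms are single objects of $\Ccal$; via the same computation of totalizations these are detected by the semisimplicial hypercovers of the hypothesis. The reduction of arbitrary hypercovers to those with terms in $\Ccal$, which leans on proposition \ref{prop singleton topology} together with the semisimplicial methods developed for theorem \ref{teo properties regularity}, is the step I expect to require the most care.
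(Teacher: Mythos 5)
Your part (1) is correct but runs along a genuinely different track from the paper. You identify $|j(X_\bullet)|$ with the sieve $S_f$ generated by $f$ and then invoke the machinery of topological localizations: the pullback-stable family $S' = \{S_f \hookrightarrow j(X)\}$ generates a topological localization of $\Pcal(\Ccal)$, whose associated Grothendieck topology is generated by the $S_f$ and, by the singleton hypothesis, coincides with the given topology. This is a legitimate proof, at the cost of citing the classification of topological localizations. The paper instead argues directly and elementarily: given a covering sieve $U \hookrightarrow X$, the singleton hypothesis produces a local effective epimorphism $f: X_0 \to X$ factoring through $U$ with image $V \subseteq U$; the hypothesis handles $\Hom(X,F) \to \Hom(V,F)$, and the comparison $\Hom(U,F) \to \Hom(V,F)$ is checked on each $X' \to U$ with $X'$ representable, using that $X' \times_U V$ is the image of the base change $f'$, again a local effective epimorphism. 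Your route buys a conceptual one-liner modulo HTT \S 6.2.2; the paper's is self-contained. Either is acceptable.

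In part (2), however, your converse has a genuine gap at exactly the decisive step, which you flag but do not resolve, and the mechanism you propose does not suffice as stated. Granting that hypercomplete sheaves are those satisfying descent against all hypercoverings, your plan is to refine an arbitrary hypercover by one with terms in $\Ccal$. Two problems arise. First, while such refinements do exist over a representable base (the matching objects stay regular, by parts (4) and (9) of theorem \ref{teo properties regularity}, so $0$-regularity lets one choose single representable covers level by level), descent along a refinement does not formally imply descent along the hypercover being refined: from $\Hom(Y,F) \to \Hom(|U_\bullet|,F) \to \Hom(|S_\bullet|,F)$ with equivalent composite one cannot conclude the first map is an equivalence without a further cofinality-type argument --- this is the hard content of the Dugger--Hollander--Isaksen refinement theorem, and you supply no replacement for it. The paper sidesteps this entirely: it suffices to show that an $\infty$-connective map $\alpha: F \to F'$ between sheaves both satisfying the semisimplicial hyperdescent hypothesis is an equivalence (apply this to the hypercompletion of $F$). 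One proves $\alpha(Y)$ is $n$-connective for all $n$ by induction, the inductive step following from the case $n=0$ applied to the diagonal $F \to F \times_{F'} F$. For essential surjectivity, given $\eta: Y \to F'$, the object $Y \times_{F'} F$ is regular --- its truncations agree with those of the representable $Y$ since $\alpha$ is $\infty$-connective, so proposition \ref{prop singleton topology} and part (5) of theorem \ref{teo properties regularity} apply --- whence part (9) furnishes a semisimplicial hypercover $Y_\bullet$ of $Y \times_{F'} F$ with values in $\Ccal$; by theorem \ref{theorem relate kan and hypercover topos} this is also a hypercover of $Y$, and descent for both $F$ and $F'$ against $Y_\bullet$ produces the required lift, because $|Y_\bullet| \to Y$ factors through $Y \times_{F'} F$. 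Note that this argument never needs to refine a general hypercover: the only hypercovers used are of the regular objects $Y \times_{F'} F$, which is precisely what makes the reduction you anticipated unnecessary. To complete your version you would have to either prove an $\infty$-categorical refinement-cofinality statement or restructure along the paper's connectivity induction.
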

\begin{proof}
The only if direction in both parts is clear. We now prove the if direction of (1). Let $X$ be an object of $\ccal$ and let $U \rightarrow X$ be a covering sieve on $X$, which we think about as a subobject of $X$ inside $\Pcal(\ccal)$. We wish to show that the restriction map $\Hom_{\Pcal(\ccal)}(X , F) \rightarrow \Hom_{\Pcal(\ccal)}(U, F)$ is an isomorphism. Since the topology on $\ccal$ is singleton, there is a local effective epimorphism $f: X_0 \rightarrow X$ in $\ccal$ which factors through $U$. Let $V$ be the image of $f$ computed in $\Pcal(\ccal)$. Then our assumption guarantees that the restriction map $\Hom_{\Pcal(\ccal)}(X , F) \rightarrow \Hom_{\Pcal(\ccal)}(V, F)$  is an isomorphism. We may thus reduce to showing that the restriction map  $\Hom_{\Pcal(\ccal)}(U , F) \rightarrow \Hom_{\Pcal(\ccal)}(V, F)$ is an isomorphism. To do so it is enough to show that for every map $X' \rightarrow U$ with $X'$ in $\ccal$ the restriction map $\Hom_{\Pcal(\ccal)}(X' , F) \rightarrow \Hom_{\Pcal(\ccal)}(X' \times_U V, F)$ is an isomorphism. Note that $X' \times_U V$ is the image of the base change $f': X'_0 \rightarrow X'$ of $f$ computed in $\Pcal(\ccal)$. The desired claim now follows from our assumptions since $f'$ is a local effective epimorphism.

We now prove the if direction of (2). Let $f: X_0 \rightarrow X$ be a local effective epimorphism in $\Ccal$. Then the semisimplicial object underlying the \v{C}ech nerve of $f$ is a hypercover of $X$, and hence $F$ satisfies descent with respect to the sieve generated by $f$. Combined with (1) this implies that $F$ is a sheaf. 

It remains to show that $F$ is hypercomplete. To do so it is enough to show that if $\alpha: F \rightarrow F'$ is an $\infty$-connective morphism of sheaves on $\ccal$ such that  for every object $X$ in $\Ccal$ and every semisimplicial hypercover $X_\bullet$ of $X$ inside $\Ccal$ the maps $F(X) \rightarrow \Tot F(X_\bullet)$ and $F'(X) \rightarrow \Tot F'(X_\bullet)$ are equivalences, then $\alpha$ is an isomorphism. To do so it suffices to show that for every object $Y$ in $\ccal$ the map of spaces $\alpha(Y) : F(Y) \rightarrow F'(Y)$ is $n$-connective for all $n \geq 0$. We argue by induction on $n$. Assume for the moment that the case $n = 0$ has been proven. Then the inductive step follows as a consequence of the case $n = 0$ together with the inductive hypothesis applied to the diagonal $F \rightarrow F \times_{F'} F$. We may thus reduce to showing that for every object $Y$ in $\ccal$ the map of spaces $\alpha(Y) : F(Y) \rightarrow F'(Y)$ is essentially surjective. 

Fix a morphism $\eta: Y \rightarrow F'$. Our task is to provide a lift of $\eta$ to $F$. Combining proposition \ref{prop singleton topology} with part (5) of theorem \ref{teo properties regularity} we deduce that $Y \times_{F'} F$ is a regular object of $\Sh(\ccal)$. Applying part (9) of theorem \ref{teo properties regularity} we may construct a hypercover $Y_\bullet$ of $Y \times_{F'} F$ which factors through $\ccal$. By theorem \ref{theorem relate kan and hypercover topos} we have that $Y_\bullet$ also defines a hypercover of $Y$. Let $Z$ be the geometric realization of $Y_\bullet$ in $\Pcal(\ccal)$. Our assumptions on $F$ and $F'$ guarantee that the restriction maps $F(Y) \rightarrow F(Z)$ and $F'(Y) \rightarrow F'(Z)$ are isomorphisms, so we may reduce to showing that the composite map $Z \rightarrow Y \rightarrow F'$ lifts to $F$, which follows from the fact that the map $Z \rightarrow Y$ factors through $Y \times_{F'} F$.
\end{proof}

We now specialize the above discussion to the case of the regular topology on a regular category. 

\begin{proposition}\label{prop regular in sheaf topos}
Let $\Ccal$ be a regular category. Then $\Sh(\Ccal)$ and $\Sh(\Ccal)^\hyp$ are regular and locally regular topoi. Furthermore, for an object $X$ in $\Sh(\Ccal)$, the following are equivalent:
\begin{enumerate}[\normalfont (1)]
\item $X$ is regular.
\item There exists a semisimplicial hypercover of $X$ with values in $\Ccal$.
\item $\tau_{\leq k}(X)$ belongs to $\Ccal^\ex$ for all $k \geq -2$.
\end{enumerate}
\end{proposition}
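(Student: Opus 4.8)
The plan is to first establish that $\Sh(\Ccal)$ and $\Sh(\Ccal)^\hyp$ are regular and locally regular topoi. Since the regular topology on a regular category $\Ccal$ is subcanonical and singleton (as recorded in the example following definition \ref{definition singleton topology}), proposition \ref{prop singleton topology} immediately yields that every representable sheaf is regular and that $\Sh(\Ccal)$ is regular and locally regular. Here I would use that a regular category, being finitely complete, has a terminal object, so the ``regular'' (not just ``locally regular'') conclusion applies. For $\Sh(\Ccal)^\hyp$, I would invoke part (7) of theorem \ref{teo properties regularity}, which tells us that the hypercompletion of a regular topos is again regular, and moreover that an object is regular in $\Sh(\Ccal)$ if and only if its image in the hypercompletion is regular; local regularity of $\Sh(\Ccal)^\hyp$ follows since the representables, now known to be regular, remain a generating family after hypercompletion.

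Next I would prove the equivalence of (1), (2), and (3). The implication $(1) \Rightarrow (2)$ is essentially a citation: since every object of the generating subcategory $\Ccal \subseteq \Sh(\Ccal)$ is regular by the first paragraph, part (9) of theorem \ref{teo properties regularity} applies verbatim, giving that a regular object admits a semisimplicial hypercover by objects of $\Ccal$. Conversely, $(2) \Rightarrow (1)$ follows from part (8) of theorem \ref{teo properties regularity}, since the terms of the hypercover, being representable, are regular. Thus (1) and (2) are equivalent with almost no additional work.

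The substantive part is connecting these to condition (3), which characterizes regularity in terms of the truncations landing in the exact completion $\Ccal^\ex$. My approach would be to use part (5) of theorem \ref{teo properties regularity}: since $\Sh(\Ccal)$ is regular, $X$ is regular if and only if $\tau_{\leq k}(X)$ is regular for every $k \geq -2$. This reduces the problem to identifying, for each fixed $k$, the $k$-truncated regular objects of $\Sh(\Ccal)$ with the objects of $\Ccal^\ex$. I expect this identification to be the main obstacle, and I would handle it by appealing to the $(n,1)$-categorical correspondence of theorem \ref{theorem regular n1 are}, which relates regular $(n,1)$-topoi to exact $(n,1)$-categories; applied level by level in the truncation degree, this should match the $k$-truncated regular objects of $\Sh(\Ccal)$ with the exact completion. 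Concretely, a $k$-truncated regular object admits a hypercover by representables by the equivalence $(1) \Leftrightarrow (2)$ already established, and its realization as a quotient of an equivalence relation (or higher groupoid) presents it as an object of the exact completion $\Ccal^\ex$; conversely every object of $\Ccal^\ex$ arises this way and is therefore regular by part (8). The care required is in verifying that the exact completion $\Ccal^\ex$, as defined earlier in the paper, is precisely the category of truncated regular sheaves, and in keeping track of the truncation level so that the inductive structure of the regularity definition lines up with the tower of truncations.
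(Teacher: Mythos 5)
Your first two paragraphs match the paper's proof exactly: regularity and local regularity of $\Sh(\Ccal)$ come from proposition \ref{prop singleton topology} applied to the subcanonical singleton regular topology, the hypercomplete case comes from theorem \ref{teo properties regularity} part (7), and the equivalence $(1) \Leftrightarrow (2)$ is parts (9) and (8) of theorem \ref{teo properties regularity}, as you say. The problem is in your treatment of condition (3), where you have correctly identified the substantive step but your proposed argument for it does not work. First, the appeal to theorem \ref{theorem regular n1 are} is circular: in the paper that theorem appears later, and the proof of its part (1) explicitly cites the present proposition (together with proposition \ref{prop equiv regularity in C o Ctruncated}) to identify an exact $(n,1)$-category with the truncated regular objects of its sheaf topos. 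Moreover, even setting aside the circularity, that theorem concerns exact $(n,1)$-categories, whereas here $\Ccal$ is merely regular and the whole content is to relate $\Ccal^\ex$ to the truncated regular sheaves --- which is exactly the identification you defer to ``the care required,'' so nothing is gained.

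Your concrete fallback --- a $k$-truncated regular $X$ admits a semisimplicial hypercover by representables, and ``its realization \dots presents it as an object of $\Ccal^\ex$'' --- also has a genuine gap. The closure defining $\Ccal^\ex$ is under geometric realizations of \emph{groupoid} objects, not arbitrary semisimplicial realizations, and the realization of a hypercover only recovers $X$ up to an $\infty$-connective map (i.e., after hypercompletion or truncation), so the hypercover does not directly exhibit $X$ as lying in $\Ccal^\ex$. The missing idea is the paper's induction on the truncation level $k$: choose an effective epimorphism $f: X_0 \rightarrow X$ with $X_0$ representable (using $0$-regularity), form the \v{C}ech nerve $X_\bullet$ of $f$ --- which \emph{is} a groupoid object, with $X = |X_\bullet|$ --- and reduce to showing $X_1 = X_0 \times_X X_0$ lies in $\Ccal^\ex$. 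This is where the induction bites: $X_1$ is regular by part (4) of theorem \ref{teo properties regularity}, and since $X$ is $k$-truncated the map $X_1 \rightarrow X_0 \times X_0$ is $(k-1)$-truncated; one then applies the inductive hypothesis \emph{inside the slice}, using the equivalence $\Sh(\Ccal_{/X_0 \times X_0}) = \Sh(\Ccal)_{/X_0 \times X_0}$, and transports the conclusion back along the forgetful functor, which preserves geometric realizations of groupoid objects. Without this slice-topos induction you have no way to handle the kernel object $X_1$, which is in general not representable; your sketch simply asserts the quotient presentation exists.
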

\begin{proof}
The fact that $\Sh(\Ccal)$ is regular and locally regular is a direct consequence of proposition \ref{prop singleton topology}, since the regular topology on $\Ccal$ is subcanonical and singleton. The fact that the same holds for $\Sh(\Ccal)^\hyp$ follows from theorem \ref{teo properties regularity} part (7). The equivalence between properties (1) and (2) follows from a combination of proposition \ref{prop singleton topology} and part (9) of theorem  \ref{teo properties regularity}.

 We will finish the proof by showing that properties (1) and (3) are equivalent. By part (5) of theorem \ref{teo properties regularity} we may reduce to showing that if $X$ is $k$-truncated then $X$ is regular if and only if it belongs to $\Ccal^\ex$. The if direction follows from the fact that $\Ccal^\ex$ is generated by $\Ccal$ under geometric realizations of groupoids and regular objects are closed under geometric realizations of groupoids (theorem \ref{teo properties regularity} part (8)).  It remains to show that if $X$ is a $k$-truncated regular object then $X$ belongs to $\Ccal^\ex$.

We argue by induction on $k$. The case $k = -2$ is clear, so assume that $k \geq -1$ and that the assertion is known for $k-1$. Since $X$ is $0$-regular, we may pick an object $X_0$ in $\Ccal$ and an effective epimorphism $f: X_0 \rightarrow X$.  Let $X_\bullet$ be the \v{C}ech nerve of $f$. We have $X = |X_\bullet|$, so it will suffice to show that $X_\bullet$ factors through $\Ccal^\ex$. Since $X_\bullet$ is a groupoid object and $\Ccal^\ex$ is closed under finite limits in $\Sh(\Ccal)$, we may reduce to showing that $X_0$ and $X_1$ belong to $\Ccal^\ex$. The fact that $X_0$ belongs to $\Ccal^\ex$ is clear, so it remains to show that $X_1$ belongs to $\Ccal^\ex$. 

We have $X_1 = X_0 \times_X X_0$ and therefore $X_1$ is regular. Furthermore, since $X$ is $k$-truncated we see that the map $X_1 \rightarrow X_0 \times X_0$ is $(k-1)$-truncated. Observe that the inclusion $\Ccal_{/X_0 \times X_0} \rightarrow \Sh(\Ccal)_{/X_0 \times X_0}$ extends to an equivalence $\Sh(\Ccal_{/X_0 \times X_0}) = \Sh(\Ccal)_{/X_0 \times X_0}$. We may thus apply the inductive hypothesis to conclude that $X_1$ belongs to the closure of $\Ccal_{/X_0 \times X_0}$ under geometric realizations of groupoid objects in $\Sh(\Ccal)_{/X_0 \times X_0}$. The proof finishes by observing that the forgetful functor $\Sh(\Ccal)_{/X_0 \times X_0} \rightarrow \Sh(\Ccal)$ preserves geometric realizations of groupoid objects.
\end{proof}


\subsection{Regular \texorpdfstring{$(n,1)$}{(n,1)}-topoi}\label{subsection n1 topoi}

Let $n \geq 0$ be an integer. Then definition \ref{definition regular topos} makes sense whenever $\ccal$ is an $(n,1)$-topos instead of a topos. Our next goal is to study the resulting theory of regular $(n,1)$-topoi.

\begin{proposition}\label{prop equiv regularity in C o Ctruncated}
Let $n \geq 0$. Let $\ccal$ be a topos and assume that $\ccal_{\leq n-1}$ is a generating subcategory for $\ccal$. Let $X$ be an object of $\ccal_{\leq n-1}$ and let $k \geq 0$. The following are equivalent:
\begin{enumerate}[\normalfont (1)]
\item $X$ is $k$-regular as an object of the $(n,1)$-topos $\ccal_{\leq n-1}$.
\item $X$ is $k$-regular as an object of the topos $\ccal$.
\end{enumerate}
\end{proposition}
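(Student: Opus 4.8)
The plan is to argue by induction on $k$, after reducing to the case where $X$ is a terminal object. The elementary observation underlying everything is that for $X$ in $\ccal_{\leq n-1}$ there is an identification $(\ccal_{\leq n-1})_{/X} = (\ccal_{/X})_{\leq n-1}$: a morphism $W \to X$ between $(n-1)$-truncated objects is itself $(n-1)$-truncated, and conversely any $(n-1)$-truncated object of $\ccal_{/X}$ has $(n-1)$-truncated total space since the map from $X$ to the terminal object is $(n-1)$-truncated. As both notions of $k$-regularity of $X$ depend only on the respective slices, and as the hypothesis that $\ccal_{\leq n-1}$ generate $\ccal$ passes to $\ccal_{/X}$ (covering an object $Y \to X$ by $(n-1)$-truncated objects of $\ccal$ exhibits them as $(n-1)$-truncated objects of $\ccal_{/X}$ mapping to $X$), I may replace $\ccal$ by $\ccal_{/X}$ and assume $X$ terminal. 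The claim becomes that $\ccal$ is $k$-regular if and only if the $(n,1)$-topos $\ccal_{\leq n-1}$ is $k$-regular. The case $k = 0$ is immediate, since $0$-regularity refers only to the poset of subobjects of the terminal object, which is the same in $\ccal$ and in $\ccal_{\leq n-1}$.

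For the inductive step I invoke the statement for $k-1$, applied to $\ccal$ itself: it identifies the $(k-1)$-regular objects of $\ccal_{\leq n-1}$ with the $(n-1)$-truncated objects of $\ccal$ that are $(k-1)$-regular in $\ccal$. Write $\ccal_0$ for this class. Finite products in $\ccal_{\leq n-1}$ agree with those in $\ccal$, since the inclusion preserves limits and products of $(n-1)$-truncated objects are $(n-1)$-truncated. The easier implication is that $\ccal_{\leq n-1}$ being $k$-regular forces $\ccal$ to be $k$-regular: the hypothesis gives that $\ccal_0$ generates $\ccal_{\leq n-1}$ and is closed under products, whence $\ccal_0$ generates $\ccal$, and since every object of $\ccal_0$ is $(k-1)$-regular I may apply lemma \ref{lemma check regular on generating subcat} to $\ccal$ with the generating subcategory $\ccal_0$. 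It then remains only to check that $\ccal$ is $0$-regular (the base case) and that products of objects of $\ccal_0$ are $(k-1)$-regular, both of which are in hand.

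For the reverse implication I assume $\ccal$ is $k$-regular and must show that $\ccal_0$ is closed under products and generates $\ccal_{\leq n-1}$. Closure under products is immediate, as a product of $(k-1)$-regular objects of $\ccal$ is again $(k-1)$-regular and stays $(n-1)$-truncated. The substantive point is that $\ccal_0$ generates $\ccal_{\leq n-1}$, i.e. that $\ccal_{\leq n-1}$ is locally $(k-1)$-regular. Given an $(n-1)$-truncated object $Z$, local $(k-1)$-regularity of $\ccal$ furnishes a covering $\coprod_\beta W_\beta \to Z$ by $(k-1)$-regular objects; replacing each $W_\beta$ by its $(n-1)$-truncation relative to $Z$ yields an $(n-1)$-truncated covering of $Z$ along an $n$-connective map. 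When $k \leq n+1$ this map is $(k-1)$-connective, so by lemma \ref{lemma estimates regularity} part (2) the truncated objects remain $(k-1)$-regular, which finishes this case.

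The remaining range $k > n+1$ is where I expect the real difficulty: there the $n$-connective truncation map only guarantees that each $\tau_{\leq n-1}W_\beta$ is $n$-regular. The plan is to reduce this case to the stabilization statement that an $(n-1)$-truncated, $n$-regular object of a topos generated by its $(n-1)$-truncated objects is automatically regular, hence $(k-1)$-regular for every $k$; granting it, the truncated covering above is by regular objects and local $(k-1)$-regularity follows. To prove the stabilization statement I would work in the slice over such an object, construct there a generating subcategory of $(n-1)$-truncated $0$-regular objects closed under fiber products, and combine theorem \ref{teo properties regularity} part (10), to upgrade its objects to regular ones, with theorem \ref{teo properties regularity} part (8), to propagate regularity along a hypercover. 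The delicate point—and the main obstacle of the proof—is arranging this family so that the fiber products preserve $0$-regularity, which is precisely where the ambient $n$-regularity must be used.
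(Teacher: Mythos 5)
Your argument agrees with the paper's own proof up to and including the case $k \leq n+1$: the identification $(\ccal_{\leq n-1})_{/X} = (\ccal_{/X})_{\leq n-1}$, the induction on $k$ applied simultaneously to all $(n-1)$-truncated objects, the use of lemma \ref{lemma check regular on generating subcat} for the direction from $\ccal_{\leq n-1}$ to $\ccal$, and the truncate-a-cover argument with lemma \ref{lemma estimates regularity} part (2) are all exactly the paper's steps. The genuine gap is the case $k > n+1$, which you leave as a plan rather than a proof, and the plan does not go through as sketched. Theorem \ref{teo properties regularity} part (10) requires a generating family of $0$-regular objects closed under fiber products, and fiber products of $0$-regular objects are in general not $0$-regular (the fiber product can be initial, and the initial object is never $0$-regular) --- this is precisely the obstacle you yourself flag, and no mechanism for overcoming it is offered. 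Moreover, your ``stabilization statement'' as formulated carries no ambient regularity hypothesis; the analogous statement the paper does prove (inside the proof of proposition \ref{prop regular topos vs regular n topos}) assumes the ambient topos is $(n+1)$-regular, and in the paper's logical order that proposition is itself a consequence of the present one, so importing it here would also risk circularity.

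What actually closes the case $k \geq n+2$ in the paper is an ingredient already present in your toolkit but unused: lemma \ref{lemma estimates regularity} part (4). In your notation, after truncating the cover you have the $(n-1)$-truncated object $\tau_{\leq n-1} W_\beta$, which is $n$-regular by part (2) applied to the $n$-connective truncation map. Since this object is $(n-1)$-truncated, its map to the terminal object (in the paper, which does not reduce to the terminal case, the map to $X$) is $(n-1)$-truncated, hence $(m-2)$-truncated for every $m \geq n+1$; and the target is $k$-regular by the standing hypothesis, hence $m$-regular for every $m \leq k$, because $m$-regularity implies $(m-1)$-regularity via the empty product in the closure condition. Applying part (4) inductively for $m = n+1, \dots, k$ upgrades $\tau_{\leq n-1} W_\beta$ from $n$-regular to $k$-regular, in particular $(k-1)$-regular, which is exactly what your covering argument needs. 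The point you missed is that the object being mapped into is the very one assumed $k$-regular, so no free-standing stabilization statement --- and none of the machinery of theorem \ref{teo properties regularity} parts (8) and (10) --- is required.
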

\begin{proof}
 We first show that (1) implies (2). We argue by induction on $k$. The case $k = 0$ follows directly from the definitions. Assume now that $k > 0$ and that the assertion is known for $k-1$.  Our assumptions on $\ccal$ imply that the topos $\ccal_{/X}$ is generated by the category of $(k-1)$-regular objects of the $(n,1)$-topos  $(\ccal_{\leq n-1})_{/X} = (\ccal_{/X})_{\leq n-1}$. Applying lemma \ref{lemma check regular on generating subcat} together with the inductive hypothesis we reduce to showing that for every pair of maps $X_0 \rightarrow X \leftarrow X_1$ in $\ccal_{\leq n-1}$ where $X_0$ and $X_1$ are $(k-1)$-regular objects of $\ccal_{\leq n-1}$ the object $X_0 \times_X X_1$ is $(k-1)$-regular in $\ccal_{\leq n-1}$. This follows from the fact that $X$ is $k$-regular in $\ccal_{\leq n-1}$

We now show that (2) implies (1). As before, the case $k = 0$ is clear, so we assume that $k > 0$ and that the assertion is known for $k-1$. Using that (1) implies (2) together with the inductive hypothesis we deduce that the full subcategory of $(\ccal_{\leq n-1})_{/X}$ on the $(k-1)$-regular objects is closed under finite limits. We may now reduce to showing that $X$ is locally $(k-1)$-regular as an object of $\ccal_{\leq n-1}$. Since $X$ is locally $(k-1)$-regular as an object of $\ccal$, it is enough to show that if $Y$ is a $(k-1)$-regular object of $\ccal_{/X}$ then $\tau_{\leq n-1}(Y)$ is a $(k-1)$-regular object of $\ccal_{\leq n-1}$. By the inductive hypothesis it suffices to prove that $\tau_{\leq n-1}(Y)$ is a $(k-1)$-regular object of $\ccal$. The case $k \leq n+1$ follows from part (2) of lemma \ref{lemma estimates regularity}. Assume now that $k \geq n+2$. Then by  lemma \ref{lemma estimates regularity} we have that $\tau_{\leq n-1}(Y)$ is an $n$-regular object of $\ccal$. Part (4) of lemma \ref{lemma estimates regularity} now implies that $\tau_{\leq n-1}(Y)$ is a $k$-regular object of $\ccal$.
\end{proof}

\begin{proposition}\label{prop regular topos vs regular n topos}
Let $n \geq 0$. Let $\ccal$ be a topos and assume that $\ccal_{\leq n-1}$ is a generating subcategory for $\ccal$. The following are equivalent:
\begin{enumerate}[\normalfont (1)]
\item The topos $\ccal$ is regular and locally regular.
\item The topos $\ccal$ is $(n+1)$-regular.
\item The $(n,1)$-topos $\ccal_{\leq n-1}$ is regular and locally regular.
\item The $(n,1)$-topos $\ccal_{\leq n-1}$ is $(n+1)$-regular.
\end{enumerate}
\end{proposition}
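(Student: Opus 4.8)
The plan is to deduce the four conditions from one another using the already-established proposition \ref{prop equiv regularity in C o Ctruncated}, which identifies $k$-regularity in $\ccal_{\leq n-1}$ with $k$-regularity in $\ccal$ for truncated objects and all $k \geq 0$. Applying it to the terminal object (which is $(-1)$-truncated, hence lies in $\ccal_{\leq n-1}$) immediately yields $(2) \Leftrightarrow (4)$, and more generally shows that $\ccal$ is regular if and only if $\ccal_{\leq n-1}$ is regular. The implications $(1) \Rightarrow (2)$ and $(3) \Rightarrow (4)$ are trivial, since regularity includes $(n+1)$-regularity. It therefore remains to prove $(2) \Rightarrow (1)$ and $(1) \Rightarrow (3)$; these, together with the equivalence $(2) \Leftrightarrow (4)$ and the two trivial implications, close the cycle and give all four equivalences. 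I would also record at the outset the monotonicity principle that $m$-regular implies $(m-1)$-regular: since the collection of $(m-1)$-regular objects of $\ccal_{/X}$ is closed under finite products it contains the terminal object $\mathrm{id}_X$, which says exactly that $X$ is $(m-1)$-regular.

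The heart of the argument is $(2) \Rightarrow (1)$. Assuming $\ccal$ is $(n+1)$-regular, I would first produce a good generating family: let $\ccal_0$ be the collection of $(n-1)$-truncated, $n$-regular objects of $\ccal$. Since $\ccal$ is locally $n$-regular, the $n$-regular objects generate, and applying part (2) of lemma \ref{lemma estimates regularity} to the $n$-connective truncation maps $U \rightarrow \tau_{\leq n-1}(U)$ shows that their truncations are again $n$-regular; combined with the hypothesis that $\ccal_{\leq n-1}$ generates, one checks that $\ccal_0$ is a generating subcategory of $\ccal$. Moreover $\ccal_0$ is closed under finite products: a finite product of $(n-1)$-truncated objects is $(n-1)$-truncated, and a finite product of $n$-regular objects is $n$-regular because $\ccal$ is $(n+1)$-regular (the empty product gives $1 \in \ccal_0$ by monotonicity).

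The main obstacle is that knowing $\ccal_0$ consists of $n$-regular objects does not by itself give regularity, and the natural bootstrap is circular: to push the terminal object from $k$-regular to $(k+1)$-regular via lemma \ref{lemma check regular on generating subcat} one needs the generators to be $k$-regular, while to push a generator $Z$ from $k$-regular to $(k+1)$-regular via part (4) of lemma \ref{lemma estimates regularity} one needs the terminal object to be $(k+1)$-regular. I would resolve this by interleaving the two lemmas in a single induction on $k \geq n$, proving that every object of $\ccal_0$ is $k$-regular. The base case $k = n$ holds by definition. For the inductive step, I first apply lemma \ref{lemma check regular on generating subcat} to the generating family $\ccal_0$: its objects are $k$-regular by hypothesis, their pairwise products lie in $\ccal_0$ hence are $k$-regular, and $\ccal$ is $0$-regular by monotonicity, so $\ccal$ is $(k+1)$-regular. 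Now for each $Z \in \ccal_0$ the structure map $Z \rightarrow 1$ is $(n-1)$-truncated, hence $(k-1)$-truncated since $k \geq n$; applying part (4) of lemma \ref{lemma estimates regularity} with the $(k+1)$-regular terminal object and the $k$-regular $Z$ shows that $Z$ is $(k+1)$-regular. This completes the induction, so every object of $\ccal_0$, and in particular the terminal object, is regular; thus $\ccal$ is regular, and since $\ccal_0$ is a generating family of regular objects, $\ccal$ is locally regular, giving $(1)$.

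Finally, for $(1) \Rightarrow (3)$ I would transfer along the truncation. The regular parts match by proposition \ref{prop equiv regularity in C o Ctruncated} applied to the terminal object. For local regularity, given a regular object $U$ of $\ccal$, part (5) of theorem \ref{teo properties regularity} shows $\tau_{\leq n-1}(U)$ is regular in $\ccal$, hence regular in $\ccal_{\leq n-1}$ by proposition \ref{prop equiv regularity in C o Ctruncated}; since the regular objects generate $\ccal$ and every object of $\ccal_{\leq n-1}$ is $(n-1)$-truncated, applying $\tau_{\leq n-1}$ to a covering by regular objects shows these truncations generate $\ccal_{\leq n-1}$. Hence $\ccal_{\leq n-1}$ is regular and locally regular, completing the cycle.
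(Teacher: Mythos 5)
Your proposal is correct and follows essentially the same route as the paper: the trivial implications plus proposition \ref{prop equiv regularity in C o Ctruncated} handle $(1)\Leftrightarrow(3)$ and $(2)\Leftrightarrow(4)$ (the paper additionally invokes part (5) of theorem \ref{teo properties regularity} for $(1)\Leftrightarrow(3)$), and the core step $(2)\Rightarrow(1)$ is the same interleaved induction on $k \geq n$ over the $(n-1)$-truncated $n$-regular objects, alternating lemma \ref{lemma check regular on generating subcat} with part (4) of lemma \ref{lemma estimates regularity}. You merely spell out details the paper leaves implicit, such as why this family generates and why its closure under finite products follows from $(n+1)$-regularity.
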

\begin{proof}
Note that (1) implies (2) and (3) implies (4). Conditions (2) and (4) are equivalent thanks to proposition \ref{prop equiv regularity in C o Ctruncated}. The equivalence between (1) and (3) follows from another application of proposition \ref{prop equiv regularity in C o Ctruncated} together with part (5) of theorem \ref{teo properties regularity}. 

We will finish the proof by showing that (2) and (4) imply (1). It will be enough for this to show that every $n$-regular $(n-1)$-truncated object of $\ccal$ is regular. We will do so by showing that if $k \geq n$ then every $n$-regular $(n-1)$-truncated object of $\ccal$ is $k$-regular. We argue by induction on $k$. Assume that $k > n$ and that the assertion is known for $k-1$. By part (4) of lemma \ref{lemma estimates regularity} it is enough to show that $\ccal$ is $k$-regular. The case $k = n+1$ is in our assumptions, while the case $k > n+1$ follows from our inductive hypothesis by applying lemma \ref{lemma check regular on generating subcat} with the subcategory of $(n-1)$-truncated $n$-regular objects of $\ccal$.
\end{proof}

Combining propositions \ref{prop regular topos vs regular n topos} and \ref{prop regular in sheaf topos}, we see that for every exact $(n,1)$-category $\ccal$ the $(n,1)$-topos $\Sh(\ccal)_{\leq n-1}$ is regular. Our next goal is to show that this provides a one to one correspondence between exact $(n,1)$-categories and regular $(n,1)$-topoi.

\begin{notation}
Let $n \geq 0$. We denote by $(n,1)\kr\Top$ the category whose objects are $(n,1)$-topoi and whose morphisms are geometric functors. For each $(n,1)$-topos $\ccal$ we denote by $\ccal^\reg$ the full subcategory of $\ccal$ on the regular objects.
\end{notation}

\begin{remark}\label{remark univ prop sh less n}
Let $n \geq 0$ and let $\Ccal$ be a regular $(n,1)$-category. Then for every $(n,1)$-topos $\Tcal$, restriction along the inclusion $\ccal \rightarrow \Sh(\ccal)_{\leq n-1}$ induces an equivalence between the category of geometric morphisms of topoi  $\Sh(\Ccal)_{\leq n-1} \rightarrow \Tcal$ and the category of regular functors $\Ccal \rightarrow \Tcal$. In particular, the assignment $\ccal \mapsto \Sh(\Ccal)_{\leq n-1} $ assembles into a functor $(n,1)\kr\Cat_{\ex} \rightarrow (n,1)\kr\Top$.
\end{remark}

\begin{theorem}\label{theorem regular n1 are}
Let $n \geq 0$. 
\begin{enumerate}[\normalfont (1)]
\item Let $\ccal$ be an exact $(n,1)$-category. Then the inclusion $\ccal \rightarrow \Sh(\ccal)_{\leq n-1}$ restricts to an equivalence $\ccal = (\Sh(\ccal)_{\leq n-1})^\reg$.
\item Let $\Tcal$ be a regular $(n,1)$-topos. Then $\Tcal^\reg$ is an small exact $(n,1)$-category, the inclusion $\Tcal^\reg \rightarrow \Tcal$ is regular, and the induced functor $\Sh(\Tcal^\reg)_{\leq n-1} \rightarrow \Tcal$ is an equivalence.
\item The assignment $\ccal \mapsto \Sh(\ccal)_{\leq n-1}$ induces an equivalence between $(n,1)\kr\Cat_{\ex}$ and the subcategory of $(n,1)\kr\Top$ on the regular $(n,1)$-topoi and geometric functors which preserve regular objects.
\end{enumerate} 
\end{theorem}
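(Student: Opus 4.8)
The plan is to establish the three parts in order, using part (1) as the engine that drives parts (2) and (3). For part (1), I would first observe that since $\ccal$ is exact it carries a subcanonical singleton (regular) topology, so proposition \ref{prop singleton topology} makes every representable regular in $\Sh(\ccal)$; as the objects of $\ccal$ are $(n-1)$-truncated, proposition \ref{prop equiv regularity in C o Ctruncated} shows they remain regular in the $(n,1)$-topos $\Sh(\ccal)_{\leq n-1}$, so the (fully faithful) inclusion lands in $(\Sh(\ccal)_{\leq n-1})^\reg$. For the reverse containment, a regular object $X$ of $\Sh(\ccal)_{\leq n-1}$ is $(n-1)$-truncated and, by proposition \ref{prop equiv regularity in C o Ctruncated}, regular in $\Sh(\ccal)$; proposition \ref{prop regular in sheaf topos} then yields $X=\tau_{\leq n-1}(X)\in\ccal^\ex$, and since $\ccal$ is exact the $(n-1)$-truncated objects of $\ccal^\ex$ coincide with $\ccal$, so $X\in\ccal$.

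For part (2) I would first verify that $\Tcal^\reg$ is a small exact $(n,1)$-category: it is closed under finite limits by theorem \ref{teo properties regularity} part (4) together with regularity of the terminal object; it is essentially small because its objects are almost compact by theorem \ref{teo properties regularity} part (3); and every $(n-1)$-truncated groupoid object in it is effective in the ambient topos $\Tcal$ with regular quotient, since the associated hypercover has regular terms (pullbacks of regular objects, theorem \ref{teo properties regularity} part (4)) so theorem \ref{teo properties regularity} part (8) applies. The same argument shows image factorizations stay inside $\Tcal^\reg$, whence the inclusion $\Tcal^\reg\rightarrow\Tcal$ is regular. The heart of the matter is the claimed equivalence. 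Here I would recognize $\Tcal^\reg$ as a generating subcategory of $\Tcal$ closed under finite limits, and observe that its canonical topology is singleton: every object of $\Tcal^\reg$ is $0$-regular, so any jointly effectively epimorphic family with regular target contains a single effective epimorphism, and thus the canonical topology coincides with the regular topology. The standard recovery of an $(n,1)$-topos as the $(n-1)$-truncated sheaves on a generating, finite-limit-closed subcategory equipped with its canonical topology then exhibits the restricted Yoneda functor $\Tcal\rightarrow\Sh(\Tcal^\reg)_{\leq n-1}$ as an equivalence, whose inverse is precisely the geometric functor of remark \ref{remark univ prop sh less n} induced by the inclusion.

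For part (3) the functor $\ccal\mapsto\Sh(\ccal)_{\leq n-1}$ of remark \ref{remark univ prop sh less n} lands in regular $(n,1)$-topoi by propositions \ref{prop regular topos vs regular n topos} and \ref{prop regular in sheaf topos}, and it carries a regular functor to a geometric functor preserving regular objects: on the generating subcategory $\ccal$ the image lies in the regular objects, and since every regular object is built from $\ccal$ by geometric realizations (theorem \ref{teo properties regularity} parts (8)--(9)), which a colimit-preserving geometric functor respects, all regular objects are preserved. Full faithfulness on morphisms follows by combining remark \ref{remark univ prop sh less n} with part (1): geometric functors $\Sh(\ccal)_{\leq n-1}\rightarrow\Sh(\ccal')_{\leq n-1}$ preserving regular objects correspond to regular functors $\ccal\rightarrow(\Sh(\ccal')_{\leq n-1})^\reg=\ccal'$, i.e.\ to morphisms in $(n,1)\kr\Cat_\ex$. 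Essential surjectivity is exactly part (2). Hence the functor is an equivalence onto the subcategory of regular-object-preserving geometric functors.

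The main obstacle I anticipate is the equivalence asserted in part (2); the remaining content is largely a bookkeeping assembly of the preceding results. The two delicate points there are that $\Tcal^\reg$ is genuinely exact -- effectivity of groupoids is inherited from $\Tcal$, but one must confirm through theorem \ref{teo properties regularity} part (8) that the resulting quotients (and images) stay regular -- and the identification of the canonical topology on $\Tcal^\reg$ with the regular one, which is what both makes the comparison applicable and forces the comparison functor to agree with the geometric functor of remark \ref{remark univ prop sh less n}. Proposition \ref{prop describe hypercompletes}, characterizing descent in the singleton setting, is available as a backup should a direct descent argument be preferred to the cited comparison statement.
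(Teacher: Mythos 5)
Your proposal is correct in outline and follows the same skeleton as the paper: part (1) via proposition \ref{prop equiv regularity in C o Ctruncated} together with proposition \ref{prop regular in sheaf topos} and the identification of $\ccal$ with the $(n-1)$-truncated part of $\ccal^\ex$; part (3) as bookkeeping from remark \ref{remark univ prop sh less n} and parts (1)--(2) (your extra hypercover argument for preservation of regular objects is unnecessary, since by part (1) the regular objects of $\Sh(\ccal)_{\leq n-1}$ \emph{are} $\ccal$, which is exactly how the paper concludes). The genuine divergences are in part (2). For smallness, you use almost compactness (theorem \ref{teo properties regularity} part (3)): since regular objects of $\Tcal$ are $(n-1)$-truncated, almost compactness makes them compact in the presentable category $\Tcal$, hence essentially small in number; the paper instead routes through coherence of the hypercompletion and \cite{SAG} proposition A.6.6.1. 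Both work, and yours is arguably leaner. For the key equivalence, you invoke a ``standard recovery'' of an $(n,1)$-topos as $(n-1)$-truncated sheaves on an arbitrary small generating, finite-limit-closed subcategory with its canonical topology. The paper does not have this statement off the shelf in that generality: it only uses the recovery $\Tcal = \Sh(\Tcal^\kappa)_{\leq n-1}$ for the accessible subcategory $\Tcal^\kappa$ (where restricted Yoneda is fully faithful for accessibility reasons), and then bridges from $\Tcal^\kappa$ to $\Tcal^\reg$ by showing $\Sh(\Tcal^\reg)^\hyp \simeq \Sh(\Tcal^\kappa)^\hyp$ via the density lemma \ref{lemma density}, truncating afterwards. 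Your cited recovery statement is true for finite $n$ (truncatedness kills the hyperdescent obstruction that makes it fail for $\infty$-topoi), but its proof for a general generating subcategory is essentially the paper's density argument, so you have black-boxed the one step where the paper does real work; your observation that the canonical topology on $\Tcal^\reg$ is singleton, and coincides with the regular topology, is correct but is input to that comparison rather than a substitute for it.

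One further point to tighten: theorem \ref{teo properties regularity} and its lemmas are stated for topoi, not $(n,1)$-topoi, yet in part (2) you apply parts (3), (4) and (8) ``in the ambient topos $\Tcal$'' --- but $\Tcal$ is not a topos. The paper's opening move in part (2), choosing $\kappa$ and writing $\Tcal = \Sh(\Tcal^\kappa)_{\leq n-1}$, exists precisely to furnish a genuine ambient topos in which to apply these results, with proposition \ref{prop equiv regularity in C o Ctruncated} transferring regularity back and forth. Your argument is repairable by inserting this step, but as written the applications of the theorem are not licensed.
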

\begin{proof}
We begin by proving (1). Applying proposition \ref{prop equiv regularity in C o Ctruncated} we reduce to showing that the inclusion $\ccal \rightarrow \Sh(\ccal)$ induces an equivalence between $\ccal$ and the full subcategory of $\Sh(\ccal)$ on the $(n-1)$-truncated regular objects. This follows from a combination of propositions \ref{prop regular in sheaf topos} and \ref{prop ex completion of ex n cat}.

We now prove (2). Let $\kappa$ be a regular cardinal such that $\Tcal$ is $\kappa$-accessible, the full subcategory $\Tcal^\kappa$ of $\Tcal$ on the $\kappa$-compact objects is closed under finite limits, and $\Tcal^\kappa$ contains $\Tcal^\reg$. Equip $\Tcal^\kappa$ with the canonical topology (so that a sieve is covering if and only if it contains a jointly effectively epimorphic family) so that we have $\Tcal = \Sh(\Tcal^\kappa)_{\leq n-1}$. By propositions  \ref{prop equiv regularity in C o Ctruncated}, $\Tcal^\reg$ is the full subcategory of $\Sh(\Tcal^\kappa)$ on the $(n-1)$-truncated regular objects. Proposition \ref{prop regular topos vs regular n topos} shows that $\Sh(\Tcal^\kappa)$ is a regular and locally regular topos. By theorem \ref{teo properties regularity} we have that $\Sh(\Tcal^\kappa)^\hyp$ is coherent and locally coherent, and $\Tcal^\reg$ embeds inside the full subcategory of $\Sh(\Tcal^\kappa)^\hyp$ on the coherent objects. This implies that $\Tcal^\reg$ is small, by virtue of \cite{SAG} proposition A.6.6.1.  An application of theorem \ref{teo properties regularity} shows that the full subcategory of $\Sh(\Tcal^\kappa)$ on the regular objects is closed under finite limits and geometric realizations of groupoid objects.  The fact that $\Tcal^\reg$ is an exact $(n,1)$-category now follows from example \ref{example truncate exact is n exact}, while the regularity of the inclusion $\Tcal^\reg \rightarrow \Tcal$ is a consequence of remark \ref{remark effective epis in trucation}. 

To finish the proof of (2) it will suffice to show that the canonical map $\Sh(\Tcal^\reg)_{\leq n-1} \rightarrow \Sh(\Tcal^\kappa)_{\leq n-1} = \Tcal$ is an equivalence.  We claim that in fact the induced functor $\Sh(\Tcal^\reg)^\hyp \rightarrow \Sh(\Tcal^\kappa)^\hyp$ is an equivalence. This follows from an application of lemma \ref{lemma density}, using the fact that $\Tcal^\reg$ is a generating subcategory of $\Sh(\Tcal^\kappa)^\hyp$.

It remains to establish (3). The fact that the assignment $\ccal \mapsto \Sh(\ccal)_{\leq n-1}$  factors through the desired subcategory of $(n,1)\kr\Top$ follows at the level of objects from a combination of propositions \ref{prop regular topos vs regular n topos} and \ref{prop regular in sheaf topos}, and at the level of morphisms from part (1). The surjectivity of its corestriction follows from (2). It remains to prove that the corestriction is fully faithful. Let $\ccal$ and $\dcal$ be a pair of exact $(n,1)$-categories. We wish to show that the induced functor  $\Fun^\reg(\ccal, \dcal) \rightarrow \Fun(\Sh(\ccal)_{\leq n-1}, \Sh(\dcal)_{\leq n-1})$ is fully faithful, and its image is the full subcategory of $ \Fun(\Sh(\ccal)_{\leq n-1}, \Sh(\dcal)_{\leq n-1})$ on the geometric functors which preserve regular objects. Applying remark \ref{remark univ prop sh less n} we reduce to showing that a geometric functor $\Sh(\ccal)_{\leq n-1} \rightarrow \Sh(\dcal)_{\leq n-1}$ preserves regular objects if and only if it maps $\ccal$ to $\dcal$. This is a consequence of (1).
\end{proof}

\begin{remark}
In the case $n = 0$, theorem \ref{theorem regular n1 are} provides an equivalence between the category of posets with finite meets and the category of $1$-regular locales. Every such locale is spatial, so these categories are equivalent to the opposite of a full subcategory of the category of topological spaces. The resulting contravariant equivalence is a form of Stone duality.
\end{remark}
 
\ifx\inmain\undefined
\bibliographystyle{myamsalpha2}
\bibliography{References}
\fi


\section{Completeness conditions on exact categories}\label{section completeness}

The goal of this section  is to study various conditions that one may impose on an exact category which describe its behavior ``at $\infty$''.

 We begin in \ref{subsection hypercomplete exact} by introducing the notion of hypercomplete exact category, as a generalization of the notion of hypercompleteness of topoi from \cite{HTT}.  In higher topos theory, one usually defines a topos to be hypercomplete if all its $\infty$-connective morphisms are invertible. While every hypercomplete exact category has this property, not every exact category with this property is hypercomplete. The defining feature of hypercomplete exact categories is instead a correspondence between a class of semisimplicial objects called Kan semisimplicial objects, and a class of augmented semisimplicial objects called hypercovers. 
 
The first main result of this section is theorem \ref{theorem regular hypercomplete topoi}, which provides a one to one correspondence between (small) hypercomplete exact categories and hypercomplete regular and locally regular topoi. We may think of this as a regular version of the correspondence between small hypercomplete pretopoi and hypercomplete coherent and locally coherent topoi from \cite{SAG} appendix A.
 
In \ref{subsection postnikov} we study the notion of Postnikov complete  exact category. This is a joint generalization of the notions of Postnikov complete $\infty$-topos studied in \cite{SAG} appendix A, and (via the connection between exactness and prestability discussed in section \ref{section exact}) of the notion of complete Grothendieck prestable category studied in \cite{SAG} appendix C. We prove here that every exact category $\ccal$ admits a universal regular functor into a Postnikov complete exact category $\widehat{\ccal}$, which we call the Postnikov completion of $\ccal$.
  
Finally, in \ref{subsection hypercompletion} we apply the theory of regular topoi to show that every regular category $\ccal$ admits a universal regular functor into a hypercomplete exact category $\ccal^\hyp$, which we call the hypercompletion of $\ccal$. We show that if $\ccal$ is a presentable exact category then the hypercompletion of $\ccal$ agrees with the  localization of $\ccal$ at the class of $\infty$-connective morphisms; in particular, our notion of hypercompletion specializes in the setting of topoi to the usual definition. We then study the behavior of the hypercompletion in the case when $\ccal$ is an exact $(n,1)$-category. In this case we show that the functor $\ccal \rightarrow \ccal^\hyp$ is fully faithful, and that the assignment $\ccal \mapsto \ccal^\hyp$ provides a fully faithful embedding $(n,1)\kr\Cat_{\ex} \rightarrow \Cat_{\ex}$ whose image admits a concrete description.


\subsection{Hypercomplete exact categories}\label{subsection hypercomplete exact}

We begin by reviewing the notion of (trivial) Kan fibration  of semisimplicial objects from \cite{SAG} appendix A.5.

\begin{notation}\label{notation evaluate semisimplicial object}
Let $\Ccal$ be a finitely complete category and let $X_\bullet$ be a semisimplicial object in $\Ccal$. Since $\Ccal$ is finitely complete, $X_\bullet$ admits a right Kan extension to the opposite of the category of finite semisimplicial spaces. For each finite semisimplicial space $K$ we denote by $X(K)$ the value on $K$ of the right Kan extension of $X_\bullet$.
\end{notation}

\begin{notation}
Let $n \geq 0$. We denote by $\partial [n]$ the boundary of the $n$-simplex, and for each $0 \leq i \leq n$ we denote by $\Lambda^n_i$ the $i$-th horn. We regard $[n]$, $\partial[n]$ and $\Lambda^n_i$ as finite semisimplicial spaces. In particular, for any semisimplicial object $X_\bullet$ in a finitely complete category it makes sense to consider the objects $X([n])$, $X(\partial[n])$ and $X(\Lambda^n_i)$ defined as in notation \ref{notation evaluate semisimplicial object}.
\end{notation}

\begin{definition}\label{definition kan fibrations}
Let $\Ccal$ be a regular category. A morphism $f: X_\bullet  \rightarrow Y_\bullet$ of semisimplicial objects in $\Ccal$ is said to be a Kan fibration if for every $n \geq 1$ and every $0 \leq i \leq n$, the map
\[
X([n]) \rightarrow Y([n]) \times_{Y(\Lambda^n_i ) } X(\Lambda^n_i)
\]
is an effective epimorphism. We say that $f$ is a trivial Kan fibration if for every $n \geq 0$ the map
\[
X([n]) \rightarrow Y([n]) \times_{Y(\partial[n])} X(\partial[n])
\]
is an effective epimorphism.
\end{definition}

The following basic properties are established\footnote{Although stated for topoi, the proofs only require regularity. Alternatively, one may reduce to the case of topoi by composing with the inclusion $\Ccal \rightarrow \Sh(\Ccal)$.} in \cite{SAG} A.5.2.

\begin{proposition}
Let $\Ccal$ be a regular category.
\begin{enumerate}[\normalfont (1)]
\item Trivial Kan fibrations are Kan fibrations.
\item Trivial Kan fibrations and Kan fibrations are stable under composition and base change, and contain isomorphisms.
\item Let $f: X_\bullet  \rightarrow Y_\bullet $ and $g: Y_\bullet \rightarrow Z_\bullet$ be morphisms of semisimplicial objects in $\Ccal$. If $f$ is a trivial Kan fibration and $g \circ f$ is a Kan fibration (resp. trivial Kan fibration) then $g$ is a  Kan fibration (resp. trivial Kan fibration).
\end{enumerate}
\end{proposition}

\begin{definition}
Let $\ccal$ be a regular category. We say that a semisimplicial object $X_\bullet$ in $\ccal$ satisfies the Kan condition if the projection from $X_\bullet$ to the terminal semisimplicial object is a Kan fibration. In this case we will also say that $X_\bullet$ is a Kan semisimplicial object.
\end{definition}

\begin{example}
Let $\ccal$ be a regular category and let $X_\bullet$ be a  groupoid object in $\ccal$. Then the semisimplicial object underlying $X_\bullet$ satisfies the Kan condition.
\end{example}

\begin{remark}
Let $\ccal$ be a regular category and let $X$ be an object in $\ccal$. Then a semisimplicial object in $\ccal_{/X}$ satisfies the Kan condition if and only if is image in $\ccal$ satisfies the Kan condition.
\end{remark}

\begin{definition}\label{definition hypercover}
Let $\ccal$ be a regular category. Let $Y$ be an object in $\ccal$ and $X_\bullet$ be a semisimplicial object in $\ccal_{/Y}$. We say that $X_\bullet$ is a hypercover of $Y$ if the projection from $X_\bullet$ to the terminal semisimplicial object in $\ccal_{/Y}$ is a trivial Kan fibration.
\end{definition}

\begin{remark}
Let $\ccal$ be a regular category and let $Y$ be an object of $\ccal$. Then we will usually identify semisimplicial objects in $\ccal_{/Y}$ with pairs of a semisimplicial object $X_\bullet$ in $\ccal$ and an augmentation $X_\bullet \rightarrow Y$.
\end{remark}

In the context of topoi, hypercovers and Kan semisimplicial objects are related by the following:

\begin{theorem}[\cite{SAG} corollary A.5.6.3]\label{theorem relate kan and hypercover topos}
Let $\ccal$ be a topos and let $X_\bullet \rightarrow Y$ be an augmented semisimplicial object in $\ccal$. The following are equivalent:
\begin{enumerate}[\normalfont (1)]
\item $X_\bullet$ is a hypercover of $Y$.
\item $X_\bullet$ satisfies the Kan condition and the map $|X_\bullet| \rightarrow Y$ is $\infty$-connective.
\end{enumerate}
In particular, every Kan semisimplicial object of $\ccal$ is a hypercover of its geometric realization.
\end{theorem}

We now arrive at the central definition of this section:

\begin{definition}\label{definition hypercomplete exact}
Let $\ccal$ be an exact category. We say that $\ccal$ is hypercomplete if the following conditions are satisfied:
\begin{enumerate}[\normalfont (1)]
\item Every Kan semisimplicial object $X_\bullet$ in $\ccal$ admits a geometric realization, and furthermore $X_\bullet$ is a hypercover of $|X_\bullet|$.
\item Let $X_\bullet$ be a semisimplicial hypercover of an object $Y$ in $\ccal$. Then $Y$ is the geometric realization of $X_\bullet$. 
\end{enumerate}
\end{definition}

\begin{notation}
Let $\Cat_\hyp$ be the full subcategory of $\Cat_\ex$ on the hypercomplete exact categories.
\end{notation}

\begin{example}
Let $\ccal$ be a topos. Then it follows from theorem \ref{theorem relate kan and hypercover topos} that $\ccal$ is hypercomplete in the sense of definition \ref{definition hypercomplete exact} if and only if every $\infty$-connective morphism in $\ccal$ is invertible (that is, if and only if $\ccal$ is hypercomplete in the usual sense).
\end{example}

\begin{remark} 
It turns out that the condition that $X_\bullet$ be a hypercover of $|X_\bullet|$ in item (1) of definition \ref{definition hypercomplete exact} is automatic, see proposition \ref{prop colimit of Kan is hypercover} below. Furthermore, item (2) may be replaced with the condition that $\infty$-connective morphisms in $\ccal$ be invertible, see corollary \ref{coro equivalence hypercomplete}. 
\end{remark}

\begin{remark}
Let $\ccal$ be an exact category. Then applying theorem \ref{theorem relate kan and hypercover topos} to $\Sh(\ccal)$ we deduce that if $X_\bullet$ is a hypercover of an object $Y$ in $\ccal$ then $X_\bullet$ satisfies the Kan condition. If $\ccal$ is hypercomplete then this provides a one to one correspondence between Kan semisimplicial objects and hypercovers in $\ccal$.
\end{remark}

\begin{remark}
Let $\ccal$ be an exact category. Then the assignment $X \mapsto \ccal_{/X}$ assembles into a functor $\ccal^\op \rightarrow \Cat$ which satisfies descent for the regular topology on $\ccal$, as may be seen for instance by using the fact that the assignment $X \mapsto \Sh(\ccal)_{/X}$ maps colimits in $\Sh(\ccal)$ to limits of categories. Assume now that $\ccal$ is hypercomplete. Then the same arguments show that the assignment $X \mapsto \ccal_{/X}$ sends hypercovers in $\ccal$ to totalizations in $\Cat$.
\end{remark}

 Our next goal is to show that the assignment $\ccal \mapsto \Sh(\ccal)^\hyp$ provides a one to one correspondence between hypercomplete exact categories and hypercomplete regular and locally regular topoi.

\begin{notation}
 We denote by $\Top$ the category whose objects are  topoi and whose morphisms are geometric functors. For each topos $\ccal$ we denote by $\ccal^\reg$ the full subcategory of $\ccal$ on the regular objects.
\end{notation}

\begin{theorem}\label{theorem regular hypercomplete topoi}
\hfill
\begin{enumerate}[\normalfont (1)]
\item Let $\ccal$ be a hypercomplete exact category. Then an object of $\Sh(\Ccal)$ belongs to $\Ccal$ if and only if it is hypercomplete and regular.
\item Let $\Tcal$ be a hypercomplete, regular, and locally regular topos. Then $\Tcal^\reg$ is a small hypercomplete exact category, the inclusion $\Tcal^\reg \rightarrow \Tcal$ is regular, and the induced functor $\Sh(\Tcal^\reg)^\hyp \rightarrow \Tcal$ is an equivalence.
\item The assignment $\ccal \mapsto \Sh(\ccal)^\hyp$ induces an equivalence between $\Cat_{\hyp}$ and the subcategory of $\Top$ on the hypercomplete, regular, and locally regular topoi and the geometric functors which preserve regular objects.
\end{enumerate} 
\end{theorem}
\begin{proof}
We begin by proving (1). The fact that every representable sheaf is hypercomplete is a consequence of the fact that $\Ccal$ is hypercomplete, by proposition \ref{prop describe hypercompletes}. The fact that representable sheaves are regular follows directly from proposition \ref{prop singleton topology}. It remains to show that every hypercomplete regular object $X$ in $\Sh(\Ccal)$ belongs to $\Ccal$. By proposition \ref{prop regular in sheaf topos} there exists a semisimplicial hypercover $X_\bullet$ of $X$ by objects in $\Ccal$. Then $X_\bullet$ is a Kan semisimplicial object in $\Ccal$, and it is therefore a hypercover for an object $X'$ in $\Ccal$ (since $\ccal$ is hypercomplete). In $\Sh(\Ccal)$ we have morphisms $|X_\bullet| \rightarrow X$ and $|X_\bullet| \rightarrow X'$  which are $\infty$-connective. Since $X'$ belongs to $\Ccal$ it is hypercomplete. The fact that $X$ is also hypercomplete now implies that $X = X'$, and so $X$ belongs to $\Ccal$, as desired.

We now prove (2). The fact that $\Tcal^\reg$ is hypercomplete regular is a direct consequence of the fact that $\Tcal^\reg$ is closed under finite limits and geometric realizations of Kan semisimplicial objects in $\Tcal$ (theorem \ref{teo properties regularity}). The same facts imply that the inclusion $\Tcal^\reg \rightarrow \Tcal$ is regular and creates effective epimorphisms. The fact that $\Tcal^\reg$ is small follows from the fact that the full subcategory of $\Tcal$ on the coherent objects is small, see \cite{SAG} proposition A.6.6.1.

Let $\kappa$ be a regular cardinal such that $\Tcal$ is $\kappa$-accessible, the full subcategory $\Tcal^\kappa$ of $\Tcal$ on the $\kappa$-compact objects is closed under finite limits, and $\Tcal^\kappa$ contains $\Tcal^\reg$. Equip $\Tcal^\kappa$ with the canonical topology (so that a sieve is covering if and only if it contains a jointly effectively epimorphic family) so that we have $\Tcal = \Sh(\Tcal^\kappa)^\hyp$. To finish the proof of (2) it will suffice to show that the canonical map $\Sh(\Tcal^\reg)^\hyp \rightarrow \Sh(\Tcal^\kappa)^\hyp = \Tcal$ is an equivalence.  This follows from  lemma \ref{lemma density} using the fact that $\Tcal^\reg$ is a generating subcategory for $\Tcal$.

We now prove (3). The fact that the assignment $\ccal \mapsto \Sh(\ccal)^\hyp$  factors through the desired subcategory of $ \Top$ follows at the level of objects from proposition \ref{prop regular in sheaf topos}, and at the level of morphisms from part (1). The surjectivity of its corestriction follows from (2). It remains to prove that the corestriction is fully faithful. Let $\ccal$ and $\dcal$ be a pair of hypercomplete exact categories. We wish to show that the induced functor  $\Fun^\reg(\ccal, \dcal) \rightarrow \Fun(\Sh(\ccal)^\hyp, \Sh(\dcal)^\hyp)$ is fully faithful, and its image is the full subcategory of $ \Fun(\Sh(\ccal)^\hyp, \Sh(\dcal)^\hyp)$ on the geometric functors which preserve regular objects. Applying remark \ref{remark univ prop sh} we reduce to showing that a geometric functor $\Sh(\ccal)^\hyp \rightarrow \Sh(\dcal)^\hyp$ preserves regular objects if and only if it maps $\ccal$ to $\dcal$. This is a consequence of (1).
\end{proof}


\subsection{Postnikov complete exact categories}\label{subsection postnikov}

We now discuss Postnikov completions in the context of exact categories.

\begin{notation}
Let $\Ccal$ be an exact category. We denote by $\widehat{\Ccal}$ the limit $\lim_k \Ccal_{\leq k}$, and by $e: \Ccal \rightarrow \widehat{\Ccal}$ the induced functor. 
\end{notation}

\begin{definition}
Let $\Ccal$ be an exact category. We say that $\Ccal$ is Postnikov complete if the functor $e: \Ccal \rightarrow \widehat{\Ccal}$ is an equivalence. We denote by $\Cat_\comp$ the full subcategory of $\Cat_\ex$ on the Postnikov complete exact categories.
\end{definition}

\begin{proposition}\label{prop postnikov}
Let $\Ccal$ be an exact category. Then:
\begin{enumerate}[\normalfont (1)]
\item $\widehat{\Ccal}$ is a Postnikov complete exact category.
\item $\widehat{\Ccal}$ is hypercomplete.
\item The functor $e: \Ccal \rightarrow \widehat{\Ccal}$ is regular.
\item For every $k \geq -2$ the functor $\Ccal_{\leq k} \rightarrow (\widehat{\Ccal})_{\leq k}$ induced by $e$ is an equivalence.
\end{enumerate}
\end{proposition}
\begin{proof}
We make use of \cite{SAG} theorem A.7.2.4, which shows that the analogous proposition holds in the case of topoi (where regular functors are replaced by geometric morphisms). We identify $\widehat{\Ccal}$ with the full subcategory of the topos $\shchat$ on those objects $X$ such that $\tau_{\leq k}(X)$ belongs to $\Ccal_{\leq k}$ for all $k$. To prove the proposition it suffices to show that $\widehat{\Ccal}$ is closed under finite limits and colimits of Kan semisimplicial objects inside $\shchat$.

The fact that $\widehat{\Ccal}$ contains the terminal object of $\shchat$ follows from the fact that $\Ccal$ contains the final object of $\Sh(\Ccal)$. We now prove that $\widehat{\Ccal}$ is closed under pullbacks. Let $X \rightarrow Y$ and $Y' \rightarrow Y$ be a pair of maps in $\widehat{\Ccal}$. For each $k \geq -2$ we have 
\[
\tau_{\leq k}(X \times_Y Y') = \tau_{\leq k}(\tau_{\leq k+1}(X) \times_{\tau_{\leq k+1}(Y)} \tau_{\leq k+1}(Y')).
\]
Since the projection $\Sh(\Ccal) \rightarrow \shchat$ preserves fiber products and truncations and $\ccal$ is closed under fiber products and truncations inside $\Sh(\ccal)$ we see that the right hand side in the above equation belongs to $\ccal_{\leq k} \subseteq \shchat_{\leq k}$. This holds for all $k$, and therefore $X \times_Y Y'$ belongs to $\widehat{\Ccal}$ as well. This shows that $\widehat{\Ccal}$ is closed under fiber products inside $\shchat$, as desired.

It remains to show that if $X_\bullet$ is a Kan semisimplicial object in $\widehat{\Ccal}$, then $X = |X_\bullet|$ belongs to $\widehat{\Ccal}$. In other words, we have to show that for every $n \geq -1$, the truncation $\tau_{\leq n-1}(X)$ belongs to $\Ccal_{\leq {n-1}}$. Combining proposition \ref{prop regular in sheaf topos} with lemma \ref{lemma estimates regularity} we may reduce to showing that $\tau_{\leq n}(X)$ is an $n$-regular object of $\Sh(\Ccal)$ for every $n \geq -1$. We will prove this assertion by induction on $n$. The case $n = -1$ is clear, so assume that $n \geq 0$ and that the assertion is known for $n-1$.

Let $X_\bullet^+$ be the semisimplicial object obtained from $X_\bullet$ by composing with the functor $\Delta^\op_{\text{s}} \rightarrow \Delta^\op_{\text{s}}$ sending $[m]$ to $[m] \star [0]$. Observe that we have a projection $X^+_\bullet \rightarrow X_\bullet$ which fits into a morphism of augmented semisimplicial objects whose value on the cone point is the projection $X_0 \rightarrow X$. Applying a combination of \cite{SAG} theorem A.5.4.1, lemma A.5.5.3 and lemma A.5.5.4 we see that $X_\bullet^+ \times_{X_\bullet} X^+_\bullet$ is a Kan semisimplicial object, and furthermore the canonical map $|X_\bullet^+ \times_{X_\bullet} X_\bullet^+| \rightarrow X_0 \times_X X_0$ is an isomorphism. Combining the fact that $\widehat{\Ccal}$ is closed under fiber products in $\shchat$ together with our inductive hypothesis we may conclude that $\tau_{\leq n-1}(X_0 \times_X X_0)$ is an $(n-1)$-regular object of $\Sh(\Ccal)$.  Note that we  have an $n$-connective morphism
\[
X_0 \times_X X_0 \rightarrow \tau_{\leq n}(X_0) \times_{\tau_{\leq n}(X)} \tau_{\leq n}(X_0)
\]
and so an application of lemma \ref{lemma estimates regularity} shows that the right hand side is $(n-1)$-regular as well. The fact that $\tau_{\leq n}(X)$ is $n$-regular now follows from lemma \ref{lemma descend regularity 2}.
\end{proof}

\begin{corollary}
Let $\ccal$ be a Postnikov complete exact category. Then $\ccal$ is hypercomplete.
\end{corollary}
\begin{proof}
Follows directly from part (2) of proposition \ref{prop postnikov}.
\end{proof}

The category $\widehat{\Ccal}$ is called the Postnikov completion of $\Ccal$. The terminology is justified by the following:

\begin{proposition}\label{proposition univer prop postnikov}
Let $\Ccal$ be an exact category and let $\Dcal$ be a Postnikov complete exact category. Then precomposition with $e$ induces an equivalence $\Fun^\reg(\widehat{\Ccal}, \Dcal) = \Fun^\reg(\Ccal, \Dcal)$.
\end{proposition}
\begin{proof}
Note that the assignment $\Ccal \mapsto \widehat{\Ccal}$ assembles into a functor $Q: \Cat_\ex \rightarrow \Cat$. We claim that $Q$ factors through $\Cat_{\ex}$. This is true objectwise by proposition \ref{prop postnikov}. Assume now given a regular functor of exact categories $F: \Ccal \rightarrow \Dcal$, and denote by $F^*: \Sh(\Ccal )\rightarrow \Sh(\Dcal)$ the induced geometric morphism. We have a commutative square
\[
\begin{tikzcd}
\widehat{\Ccal} \arrow{r}{Q(F)} \arrow{d}{} & \widehat{\Dcal} \arrow{d}{} \\
\widehat{\Sh(\ccal)} \arrow{r}{Q(F^*)} \arrow{r}{} &  \widehat{\Sh(\dcal)}
\end{tikzcd}
\]
where the upper vertical arrows create finite limits and geometric realization of groupoid objects, as shown in the proof of proposition \ref{prop postnikov}. The functor $Q(F^*)$ is a geometric morphism of topoi by \cite{SAG} theorem A.7.2.4. It now follows that $Q(F)$ is a regular functor, and hence $Q$ factors through $\Cat_{\ex}$, as desired.

Denote by  $L: \Cat_\ex \rightarrow \Cat_\ex$  the corestriction of $Q$. By part (3) of proposition \ref{prop postnikov} we have a natural transformation $\eta: \id \rightarrow L$ whose value on each exact category $\Ccal$ is the functor $e: \Ccal \rightarrow \widehat{\Ccal}$. We claim that $L$ is a localization functor with image $\Cat_\comp$. The fact that the image of $L$ is contained in $\Cat_\comp$ follows from part (1) of proposition \ref{prop postnikov}. The fact that $\eta$ is an equivalence when restricted to $\Cat_\comp$ follows from the definition of Postnikov completeness. Finally, the fact that $L(\eta)$ is an equivalence follows from part (4) of proposition \ref{prop postnikov}.

It now follows that the functor $\Fun^\reg(\widehat{\Ccal}, \Dcal) \rightarrow \Fun^\reg(\Ccal, \Dcal)$ of precomposition with $e$ induces an equivalence on spaces of objects. To show that it is an equivalence it remains to show that it also induces an equivalence on spaces of arrows. This follows from the fact that the functor $\Fun^\reg(\widehat{\ccal}, \Fun([1], \dcal)) \rightarrow \Fun^\reg(\ccal, \Fun([1],\dcal))$ of precomposition with $e$ induces an equivalence on spaces of objects, since $\Fun([1], \dcal)$ is a Postnikov complete exact category.
\end{proof}

A Postnikov complete exact category $\ccal$ is completely determined by the sequence of exact $(n,1)$-categories $\ccal_n = \ccal_{\leq n-1}$ together with the equivalences $\ccal_n = (\ccal_{n+1})_{\leq n-1}$. Our next goal is to show that any such sequence gives rise to a Postnikov complete exact category.

\begin{construction}\label{construction inverse tower}
Let $\Cat_\lex$ be the subcategory of $\Cat$ on the finitely complete categories and functors which preserve finite limits, and for each $n \geq 0$ let $(n,1)\kr\Cat_\lex$ be the full subcategory of $\Cat_\lex$ on the finitely complete $(n,1)$-categories. We have a diagram of inclusions as follows:
\[
\Cat_\lex \hookleftarrow \ldots \hookleftarrow  (2,1)\kr\Cat_{\lex} \hookleftarrow (1,1)\kr\Cat_\lex \hookleftarrow (0,1)\kr\Cat_\lex
\]
Each of the above functors admits a right adjoint given by passage to full subcategories of ($n$-)truncated objects, so we have a diagram of categories
\[
\Cat_\lex \rightarrow \ldots \rightarrow  (2,1)\kr\Cat_{\lex} \rightarrow (1,1)\kr\Cat_\lex \rightarrow (0,1)\kr\Cat_\lex.
\]
Passing to the subcategories on the exact categories and exact $(n,1)$-categories we obtain a diagram
\[
\Cat_\ex \rightarrow \ldots \rightarrow  (2,1)\kr\Cat_{\ex} \rightarrow (1,1)\kr\Cat_\ex \rightarrow (0,1)\kr\Cat_\ex.
\]
\end{construction}

\begin{proposition}\label{proposition bounded and complete vs truncated}
Let $p: \Cat_\ex \rightarrow \lim (n,1)\kr \Cat_\ex$ be the functor induced from the diagram of construction \ref{construction inverse tower}.
\begin{enumerate}[\normalfont (1)]
\item The functor $p$ admits a fully faithful left adjoint whose image is $\Cat_\ex^\bounded$.
\item The functor $p$ admits  a fully faithful right adjoint whose image is $\Cat_\comp$.
\end{enumerate} 
\end{proposition}
\begin{proof}
The restriction of $p$ to $\Cat_{\ex}^\bounded$ is conservative, so to prove (1) it will suffice to show that $p$ has a fully faithful left adjoint which factors through $\Cat_{\ex}^\bounded$.  Let $(\ccal_n)$ be an object in $\lim (n,1)\kr\Cat_\ex$. In other words, $(\ccal_n)$ is a sequence of exact $(n,1)$-categories such that $\ccal_n = (\ccal_{n+1})_{\leq n-1}$ for all $n \geq 0$. It follows from  \cite{HTT} proposition 5.5.7.11 that the projection $\Cat_\lex  \rightarrow \lim (n,1)\kr \Cat_\lex$  admits a left adjoint which sends $(\ccal_n)$ to $\ccal = \colim \ccal_n$. Note that for each $n \geq 0$ we have $\ccal_{\leq n-1} = \ccal_n$.  We may now reduce to showing the following:
\begin{enumerate}[\normalfont (i)]
\item $\ccal$ is the colimit of $\ccal_n$ inside $\Cat_\reg$.
\item $\ccal$ is a bounded exact category.
\end{enumerate}
Since the inclusions $\ccal_n \rightarrow \ccal$ are left exact and every object in $\ccal_n$ is truncated we deduce that every object in $\ccal$ is truncated. Let $f: X \rightarrow Y$ be a morphism in $\ccal$ with \v{C}ech nerve $X_\bullet$ and let $n\geq 0$ be such that $X$ and $Y$ belong to $\ccal_n$.  The fact that the inclusion $\ccal_n \rightarrow \ccal$ is left exact implies that $X_\bullet$ also factors through $\ccal_n$.  Since the inclusions $\ccal_m \rightarrow \ccal_{m+1}$ preserve geometric realizations of \v{C}ech nerves we have that $X_\bullet$ admits a geometric realization in $\ccal$, which belongs to $\ccal_n$.

Assume now given a base change $f': X' \rightarrow Y'$  for $f$ with \v{C}ech nerve $X'_\bullet$. Enlarging $n$ if necessary we may assume that $X'$ and $Y'$ belong to $\ccal_n$, which implies that $X_\bullet$ factors through $\ccal_n$. Since $\ccal_n$ is regular we have that $|X'_\bullet| = Y' \times_Y |X_\bullet|$, which implies that $\ccal$ is regular. To finish the proof of (i) we only need to observe that the class of effective epimorphisms in $\ccal$ is the union of the classes of effective epimorphisms on $\ccal_n$, which follows from the above description of the geometric realization of \v{C}ech nerves in $\ccal$. 

To prove (ii) it only remains to show that every groupoid object $X_\bullet$ in $\ccal$ is effective. Let $n\geq 0$ be such that $X_0$ and $X_1$ belong to $\ccal_n$ and $X_\bullet$ is $n$-efficient. Since $\ccal_n$ is closed under finite limits inside $\ccal$ we see that $X_\bullet$ factors through $\ccal_n$. The fact that $X_\bullet$ is a \v{C}ech nerve now follows from the fact that $\ccal_n$ is an exact $(n,1)$-category.

We now prove (2). By proposition \ref{proposition univer prop postnikov}, the inclusion $\Cat_\comp \rightarrow \Cat_\ex$ admits a left adjoint given by Postnikov completion. It will therefore suffice to show the following:
\begin{enumerate}[\normalfont (a)]
\item For each exact category $\ccal$, the induced map $p(\ccal) \rightarrow p(\widehat{\ccal})$ is an isomorphism.
\item The restriction of $p$ to $\Cat_\comp$ is an equivalence.
\end{enumerate}
Item (a) follows from proposition \ref{prop postnikov}. We now prove (b). The restriction of $p$ to $\Cat_\comp$ is conservative, so it suffices to show that it admits a fully faithful left adjoint. It follows from the first part of the proof that $p|_{\Cat_\comp}$ admits a left adjoint given by the composition
\[
\lim (n,1)\kr \Cat_\ex = \Cat_\ex^\bounded \hookrightarrow \Cat_\ex \xrightarrow{\widehat{(-)}} \Cat_\comp.
\]
We may thus reduce to showing that for every pair of bounded exact categories $\ccal, \dcal$ the canonical functor $\Fun^\reg(\ccal, \dcal) \rightarrow \Fun^\reg(\widehat{\ccal}, \widehat{\dcal})$ is an equivalence. This factors as the composition
\[
\Fun^\reg(\ccal, \dcal) \rightarrow \Fun^\reg(\ccal, \widehat{\dcal}) = \Fun^\reg(\widehat{\ccal}, \widehat{\dcal})
\]
so it is enough to show that the first arrow in the above composition is an equivalence. Since $\ccal$ is bounded it suffices to show that the map $\dcal \rightarrow \widehat{\dcal}$ induces an equivalence between the full subcategories  of $\dcal$ and $\widehat{\dcal}$ on the truncated objects, which follows from proposition \ref{prop postnikov}.
\end{proof}

 Proposition \ref{prop postnikov} and its proof may be used to deduce some useful general facts about Kan semisimplicial objects and hypercovers in exact categories.
 
\begin{proposition}\label{prop colimit of Kan is hypercover}
Let $\Ccal$ be an exact category and let $X_\bullet$ be a Kan semisimplicial object in $\Ccal$. Assume that $X_\bullet$ admits a colimit $X$. Then $X_\bullet$ is a hypercover of $X$.
\end{proposition}
\begin{proof}
Since the functor $e: \Ccal \rightarrow \widehat{\Ccal}$ preserves finite limits and creates effective epimorphisms, it suffices to show that the Kan semisimplicial object $e(X_\bullet)$ is a hypercover of $e(X)$. Since all truncations $\Ccal \rightarrow \Ccal_{\leq k}$ are left adjoints we have that $e$ preserves all colimits that exist in $\Ccal$, and in particular $e(X)$ is the colimit of $e(X_\bullet)$. Recall from the proof of proposition \ref{prop postnikov} that the inclusion $\widehat{\ccal} \rightarrow \shchat$ creates finite limits and geometric realizations of Kan semisimplicial objects. It follows that $e(X)$ is the colimit of the Kan semisimplicial object $e(X_\bullet)$ in $\shchat$. The result now follows from an application of theorem \ref{theorem relate kan and hypercover topos}.
\end{proof}

\begin{proposition}\label{prop hypercover infty connective}
Let $\Ccal$ be an exact category and let $X_\bullet$ be a hypercover of an object $Y$ in $\Ccal$. If $X_\bullet$ admits a colimit $X$ then the induced map $X \rightarrow Y$ is $\infty$-connective.
\end{proposition}
\begin{proof}
We apply proposition \ref{prop colimit of Kan is hypercover} to deduce that $X_\bullet$ is a hypercover of both $X$ and $Y$. This remains true after composing with the inclusion $\Ccal \rightarrow \Sh(\Ccal)$. Inside $\Sh(\Ccal)$ we have a commutative triangle
\[
\begin{tikzcd}
{|X_\bullet|} \arrow{r}{} \arrow{dr}{} & X \arrow{d}{} \\ {}  & Y
\end{tikzcd}
\]
where the horizontal and diagonal arrows are $\infty$-connective by virtue of \cite{SAG} corollary A.5.3.3. It follows that the right vertical arrow is $\infty$-connective, as desired.
\end{proof}

\begin{corollary}\label{coro equivalence hypercomplete}
Let $\ccal$ be an exact category. Then $\ccal$ is hypercomplete if and only if the following conditions are satisfied:
\begin{enumerate}[\normalfont (a)]
\item $\ccal$ admits geometric realizations  of Kan semisimplicial objects.
\item Every $\infty$-connective morphism in $\ccal$ is invertible.
\end{enumerate}
\end{corollary}
\begin{proof}
The if direction  follows from propositions \ref{prop colimit of Kan is hypercover} and \ref{prop hypercover infty connective}. To prove the only if direction we only need to show that if $f: X \rightarrow Y$ is an $\infty$-connective morphism in $\ccal$ then $f$ is invertible. Let $X_\bullet$ be the constant semisimplicial object of $\ccal$ with value $X$. Then $X_\bullet$ is a hypercover of $X$, and since $f$ is $\infty$-connective we have that $X_\bullet$ is also a hypercover of $Y$. The fact that $f$ is invertible now follows from the fact that $\ccal$ is hypercomplete.
\end{proof}
  
  
\subsection{Hypercompletion}\label{subsection hypercompletion}

We now study the procedure of hypercompletion of regular categories.

\begin{proposition}\label{prop univ prop hypercompl}
Let $\Ccal$ be a regular category and let $j$ be the composite functor
\[
\Ccal \hookrightarrow \Sh(\Ccal)^\reg \rightarrow (\Sh(\Ccal)^\hyp)^\reg.
\] 
\begin{enumerate}[\normalfont (1)]
\item $(\Sh(\Ccal)^\hyp)^\reg$ is a small hypercomplete exact category.
\item The functor $j$ is regular.
\item Let $\Dcal$ be a hypercomplete exact category. Then precomposition with $j$ induces an equivalence $\Fun^{\reg}((\Sh(\Ccal)^\hyp)^\reg , \Dcal) = \Fun^\reg(\ccal, \dcal)$.
\end{enumerate}
\end{proposition}
\begin{proof}
Part (1) follows from theorem \ref{theorem regular hypercomplete topoi}. Part (2) follows from the fact that the projection $\Sh(\ccal) \rightarrow \Sh(\ccal)^\hyp$ is regular, since the inclusions $\ccal \rightarrow \Sh(\ccal)$, $\Sh(\ccal)^\reg \rightarrow \Sh(\ccal)$ and $(\Sh(\Ccal)^\hyp)^\reg \rightarrow \Sh(\ccal)^\hyp$ create finite limits and effective epimorphisms. 

 We now prove (3). Composition with the inclusion $\Dcal \rightarrow \Sh(\Dcal)^\hyp$ induces a commutative square of categories
\[
\begin{tikzcd}
\Fun^\reg((\Sh(\Ccal)^\hyp)^\reg, \Dcal) \arrow{r}{} \arrow{d}{} & \Fun^\reg(\Ccal, \Dcal) \arrow{d}{} \\
\Fun^\reg((\Sh(\Ccal)^\hyp)^\reg, \Sh(\Dcal)^\hyp) \arrow{r}{} & \Fun^\reg(\Ccal, \Sh(\Dcal)^\hyp).
\end{tikzcd}
\]
We claim that this is a pullback square of categories. Since the vertical arrows are fully faithful, it suffices to show that if a regular functor $F: (\Sh(\Ccal)^\hyp)^\reg \rightarrow \Sh(\Dcal)^\hyp$ satisfies that $F\circ j$ factors through $\Dcal$, then $F$ factors through $\Dcal$. Let $X$ be an object in $(\Sh(\Ccal)^\hyp)^\reg$. By proposition \ref{prop regular in sheaf topos} there exists a hypercover of $X$ by a Kan semisimplicial object $X_\bullet$ in $\Ccal$. Then $F(X)$ has a hypercover by objects in $\Dcal$, namely $F(j(X_\bullet))$. It follows that $F(X)$ is regular and hypercomplete, and hence it belongs to $\Dcal$ by theorem \ref{theorem regular hypercomplete topoi}.

We may now reduce to showing that the bottom horizontal arrow is an equivalence. Applying remark \ref{remark univ prop sh}, we may reduce to showing that precomposition with the inclusion $(\Sh(\Ccal)^\hyp)^\reg \rightarrow \Sh(\Ccal)^\hyp$ induces an equivalence between the category of regular functors $(\Sh(\Ccal)^\hyp)^\reg \rightarrow \Sh(\Dcal)^\hyp$  and the category of geometric morphisms $\Sh(\Ccal)^\hyp \rightarrow \Sh(\Dcal)^\hyp$. This follows by another application of remark \ref{remark univ prop sh}, in light of theorem \ref{theorem regular hypercomplete topoi}.
\end{proof}

\begin{corollary}
The inclusion $\Cat_\hyp \rightarrow \Cat_\reg$ admits a left adjoint that sends each regular category $\ccal$ to $(\Sh(\ccal)^\hyp)^\reg$. 
\end{corollary}

The category $(\Sh(\Ccal)^\hyp)^\reg$ appearing in proposition \ref{prop univ prop hypercompl} is called the hypercompletion of $\ccal$.  Our next goal is to show that this terminology is compatible with the existing terminology for topoi: in other words, if $\ccal$ is a topos then the hypercompletion of $ \ccal$ defined (after a change in universe) in this  sense  coincides with the localization of $\ccal$ at the class of $\infty$-connective morphisms. In fact, we will show that this holds more generally whenever $\ccal$ is a presentable exact category.

\begin{proposition}\label{prop properties infty connective}
Let $\Ccal$ be a presentable exact category.
\begin{enumerate}[\normalfont (1)]
\item For every $n \geq -1$ the class of $n$-connective morphisms in $\ccal$ is saturated, of small generation, and stable under base change.
\item The class of $\infty$-connective morphisms  in $\ccal$ is strongly saturated, of small generation, and stable under base change.
\end{enumerate}   
\end{proposition}
\begin{proof}
We first prove (1). The fact that  the class of $n$-connective morphisms is saturated and stable under base change follows from proposition \ref{proposition 0 truncated are exact}.  It remains to prove small generation. Observe that the full subcategory of $\Ccal^{[1]}$ on the $(n-1)$-truncated morphisms is reflexive. The corresponding localization functor $L_n$ sends each arrow $f: X \rightarrow Y$  to the $(n-1)$-truncated arrow featuring in the $n$-connective-$(n-1)$-truncated factorization of $f$. Since $\Ccal$ is accessible, it has accessible finite limits, and hence $L_n$ is in fact an accessible localization. It follows that the preimage under $L_n$ of the invertible arrows is accessible. This consists of the full subcategory of $\ccal^{[1]}$ on the $n$-connective arrows.  It follows that the class of $n$-connective arrows is of small generation, as desired.

It remains to prove (2). The fact that the class of $\infty$-connective morphisms is saturated, of small generation, and stable under base change follows from (1), since these properties are stable under intersections. The strong saturation follows from the fact that if $f: X \rightarrow Y$ and $g: Y \rightarrow Z$ are two composable arrows such that $g \circ f$ and $g$ are $k$-connective, then $f$ is $(k-1)$-connective.
\end{proof}

\begin{definition}\label{definition hypercompletion presentable}
Let $\Ccal$ be a presentable exact category. We say that an object $X$ in $\Ccal$ is hypercomplete if for every  $\infty$-connective morphism $f: Y \rightarrow Z$ in $\ccal$ the map $\Hom_\ccal(Z, X) \rightarrow \Hom_\ccal(Y, X)$ of precomposition  with $f$ is an isomorphism. We denote by $\Ccal^\hyp$ the full subcategory of $\Ccal$ on the hypercomplete objects. 
\end{definition}

\begin{proposition}\label{prop hypercompletion when presentable exact}
Let $\Ccal$ be a presentable exact category. Then:
\begin{enumerate}[\normalfont (1)]
\item The inclusion $\Ccal^\hyp \rightarrow \Ccal$ admits a left adjoint $L$ that makes $\Ccal^\hyp$ into a left exact accessible localization of $\Ccal$. In particular, $\Ccal^\hyp$ is presentable and exact.
\item For every $k \geq -2$ the induced functor $L_{\leq k}: \Ccal_{\leq k} \rightarrow \Ccal^\hyp_{\leq k}$ is an equivalence.
\item Let $n \geq -1$. Then a morphism $f: X \rightarrow Y$ in $\Ccal$ is $n$-connective if and only if $L(f)$ is $n$-connective.
\end{enumerate}
\end{proposition}
\begin{proof}
Part (1) is a direct consequence of proposition \ref{prop properties infty connective}. Part (2) is equivalent to the assertion that the inclusion $\Ccal^\hyp_{\leq k} \rightarrow \Ccal_{\leq k}$ is an equivalence, which follows from the fact that $k$-truncated objects are hypercomplete. 

We finish by proving part (3). The case $n = -1$ is clear. The general case may be reduced to the case $n = 0$ by induction. Consider now the case $n = 0$. The only if direction is a direct consequence of the fact that $L$ is left exact and colimit preserving. To prove the converse, we have to show that if a morphism $f: X \rightarrow Y$ is such that $L(f)$ is an effective epimorphism in $\Ccal^\hyp$, then $f$ is an effective epimorphism in $\Ccal$. Observe that $f$ and $L(f)$ fit into a commutative square
\[
\begin{tikzcd}
X \arrow{d}{f} \arrow{r}{\eta_X} & L(X) \arrow{d}{L(f)}\\
Y \arrow{r}{\eta_Y} & L(Y)
\end{tikzcd}
\] 
where the horizontal arrows are $\infty$-connective. It thus suffices to show that $L(f)$ is an effective epimorphism in $\Ccal$. Let $U$ be the image of $L(f)$, computed in $\Ccal$. Then the map $U \rightarrow L(Y)$ is $(-1)$-truncated, and in particular it is right orthogonal to all $\infty$-connective morphisms. Since $L(Y)$ is hypercomplete we have that $U$ is also hypercomplete. The fact that $L(f)$ is an effective epimorphism in $\Ccal^\hyp$ now implies that $U = L(Y)$, and hence $L(f)$ is an effective epimorphism in $\Ccal$, as desired. 
\end{proof}

\begin{proposition}\label{prop univer prop hyp presentable}
Let $\ccal$ be a presentable exact category. Then:
\begin{enumerate}[\normalfont (1)]
\item The exact category $\ccal^\hyp$ is hypercomplete.
\item  Let $\dcal$ be a (possibly large) hypercomplete exact category. Then  precomposition with the localization $L: \ccal \rightarrow \ccal^\hyp$ induces an equivalence $\Fun^\reg(\ccal^\hyp, \dcal) = \Fun^\reg(\ccal, \dcal)$.
\end{enumerate}
\end{proposition}
\begin{proof}
Part (1) follows from corollary \ref{coro equivalence hypercomplete}, using parts (1) and (3) from proposition \ref{prop hypercompletion when presentable exact}. We now prove (2). Another application of parts (1) and (3) from proposition \ref{prop hypercompletion when presentable exact} implies that a functor $\ccal^\hyp \rightarrow \dcal$ is regular if and only if its restriction along $L$ is regular. We may thus reduce to showing that every regular functor $\ccal \rightarrow \dcal$ factors through $L$. This follows from the fact that regular functors preserve $\infty$-connective morphisms, together with corollary \ref{coro equivalence hypercomplete}.
\end{proof}

In light of propositions  \ref{prop univ prop hypercompl} and \ref{prop univer prop hyp presentable}, the assignment $\ccal \mapsto \ccal^\hyp$ from definition \ref{definition hypercompletion presentable} admits the following extension to arbitrary regular categories:

\begin{definition} \label{definition hypercompletion}
Let $\Ccal$ be a regular category. We denote by $\Ccal^\hyp$ the category $(\Sh(\Ccal)^\hyp)^\reg$. We call $\ccal^\hyp$ the hypercompletion of $\ccal$.
\end{definition}

\begin{remark}
In definition \ref{definition hypercompletion} we are implicitly assuming $\ccal$ to be small (with the understanding that this definition applies to arbitrary categories after a change in universe, see \ref{subsection size management}). Since $\ccal^\hyp$ is defined in terms of the topos of small sheaves on $\ccal$, its definition depends a priori on the boundary between small and large. It turns out however that $\ccal^\hyp$ is independent of this, since the category of small hypercomplete sheaves on $\ccal$ is closed under small colimits and limits inside the category of large hypercomplete sheaves.
\end{remark}

While the functor $\ccal \rightarrow \ccal^\hyp$ is a localization whenever $\ccal$ is presentable and exact, that is not the case in general. Our next goal is to study its behavior in the case when $\ccal$ is an exact $(n,1)$-category.

\begin{proposition}\label{proposition hyp of bounded}
Let $\Ccal$ be a bounded regular category.
\begin{enumerate}[\normalfont (1)]
\item The functor $j: \Ccal \rightarrow \Ccal^\hyp$ is fully faithful.
\item Assume that $\ccal$ is exact. Then $j$ induces an equivalence $\ccal = (\ccal^\hyp)_{< \infty}$.
\item Let $n \geq 0$ and assume that $\ccal$ is an exact $(n,1)$-category. Then $j$ induces an equivalence $\ccal = (\ccal^\hyp)_{\leq n-1}$.
\end{enumerate}
\end{proposition}
\begin{proof}
Since every object of $\Ccal$ is truncated, the inclusion $\Ccal \rightarrow \Sh(\Ccal)$ factors through the full subcategory of $\Sh(\ccal)$ on the truncated objects. Every truncated object is hypercomplete, so this inclusion in fact factors through $\ccal^\hyp$, and (1) follows. To prove (2) we need to show that if $\ccal$ is exact then every truncated object of $\ccal^\hyp$ is in the image of $j$. This is a consequence of proposition \ref{prop regular in sheaf topos}. To prove (3) we need to show that if $\ccal$ is an exact $(n,1)$-category then every $(n-1)$-truncated object of  $\ccal^\hyp$ is in the image of $j$. Applying proposition \ref{prop regular in sheaf topos} we may reduce to showing that every $(n-1)$-truncated object of $\ccal^\ex$ belongs to $\ccal$, which is a consequence of proposition \ref{prop ex completion of ex n cat}.
\end{proof}

\begin{definition}
Let $\Ccal$ be a regular category. We say that $\Ccal$ is eventually complicial if for every object $X$ there exists an effective epimorphism $X' \rightarrow X$ where $X'$ is truncated.
\end{definition}

\begin{proposition}\label{proposition eventually complicial is hyp}
\hfill
\begin{enumerate}[\normalfont (1)]
\item Let $\ccal$ be a bounded regular category. Then $\ccal^\hyp$ is eventually complicial.
\item Let $\dcal$ be an eventually complicial hypercomplete exact category. Then the inclusion $\iota: \dcal_{<\infty} \rightarrow \dcal$ induces an equivalence  $(\dcal_{< \infty})^\hyp = \dcal$.
\item Let $n \geq 0$ and let $\ccal$ be a regular $(n,1)$-category. Then $\ccal^\hyp$ is $(n-1)$-complicial.
\item Let $n \geq 0$ and let $\dcal$ be an $(n-1)$-complicial hypercomplete exact category. Then the inclusion $\iota_{n-1}: \dcal_{\leq n-1} \rightarrow \dcal$ induces an equivalence  $(\dcal_{\leq n-1})^\hyp = \dcal$.
\end{enumerate}
\end{proposition}

\begin{proof}
If $\ccal$ is a regular category, then every object of $\ccal^\hyp$ is regular in $\Sh(\ccal)$ and in particular it receives an effective epimorphism from an object in $\ccal$. Parts (1) and (3) follow  directly from this and the definitions. We now prove (2). It suffices to show that the functor $\iota^*: \Sh(\dcal_{<\infty})^\hyp \rightarrow \Sh(\dcal)^\hyp$ induced by $\iota$ is an equivalence. Since $\dcal$ is eventually complicial we have that $\dcal_{<\infty}$ is a generating subcategory of $\Sh(\dcal)^\hyp$, and in particular the same is true about the image of $\iota^*$. We may thus reduce to showing that $\iota^*$ is fully faithful. This follows from lemma \ref{lemma fully faithfulness ex and hyp}.

We now prove (4). Since $\dcal$ is $k$-complicial, it is eventually complicial, so part (2) applies. An application of proposition \ref{prop image is complicial} now shows
\[
(\dcal_{\leq k})^\hyp = ((\dcal_{\leq k})^\ex)^\hyp = (\dcal_{<\infty})^\hyp =\dcal
\]
as desired.
\end{proof}

\begin{corollary}\label{corollary fully faithful hypercomplete on n1}
\hfill
\begin{enumerate}[\normalfont (1)]
\item The hypercompletion function $\Cat^{\bounded}_\ex \rightarrow \Cat_\hyp$ is fully faithful, and its image consists of the eventually complicial hypercomplete exact categories.
\item Let $n \geq 0$. Then the hypercompletion functor $(n,1)\kr\Cat_\ex \rightarrow \Cat_\hyp$ is fully faithful, and its image consists of the $(n-1)$-complicial hypercomplete exact categories.
\end{enumerate}
\end{corollary}
\begin{proof}
We give a proof of (1), with (2) being analogous. Since regular functors map truncated objects to truncated objects, the hypercompletion functor $\Cat^{\normalfont \text{b}}_\ex \rightarrow \Cat_{\hyp}$ admits a right adjoint that sends each hypercomplete exact category $\dcal$ to $\dcal_{< \infty}$. Proposition \ref{proposition hyp of bounded} then shows that the unit of the adjunction is an isomorphism. The description of the image of hypercompletion is given by proposition \ref{proposition eventually complicial is hyp}.
\end{proof}

We finish this section with some examples of hypercompletions.

\begin{example}\label{example hypercompletion sheaves}
Let $n \geq 1$ and let $\ccal$ be an $(n,1)$-category equipped with a Grothendieck topology. Then $\Sh(\ccal)^\hyp$ is an $(n-1)$-complicial hypercomplete exact category. Applying proposition \ref{proposition eventually complicial is hyp} we obtain an equivalence
\[
\Sh(\ccal)^\hyp = (\Sh(\ccal)_{\leq n-1})^\hyp.
\]
In other words, $\Sh(\ccal)^\hyp$ is the hypercompletion of the exact $(n,1)$-category of $(n-1)$-truncated sheaves on $\ccal$.
\end{example}

\begin{example}\label{example hypercompletion diagrams}
Let $n \geq 1$ and let $\ccal$ be an $(n,1)$-category. Then specializing example \ref{example hypercompletion sheaves} to the case of the trivial Grothendieck topology we obtain an equivalence
\[
\Fun(\ccal, \Spc) = \Fun(\ccal, \Spc_{\leq n-1})^\hyp.
\]
\end{example}

\begin{warning}
Examples \ref{example hypercompletion sheaves} and \ref{example hypercompletion diagrams} do not hold in the case $n = 0$. In fact, every poset with finite meets is a hypercomplete exact category, and in particular hypercompletion acts as the identity on exact $(0,1)$-categories.
\end{warning}

\begin{example}\label{example hypercompletion of finite product theory}
Let $\ccal$ be a category with finite products and let $\Fun^\times(\ccal, \Spc)$  be the full subcategory of $\Fun(\ccal, \Spc)$ on those functors that preserve finite products. The evaluation functors $\Fun^\times(\ccal, \Spc) \rightarrow \Spc$ are jointly conservative, left exact, and preserve geometric realizations. It follows from this that  $\Fun^\times(\ccal, \Spc) $ is exact, and furthermore a morphism in $\Fun^\times(\ccal, \Spc) $ is $\infty$-connective if and only if its image under all evaluation functors is $\infty$-connective. Since $\Spc$ is hypercomplete we deduce, by corollary \ref{coro equivalence hypercomplete}, that $\Fun^\times(\ccal, \Spc)$ is a hypercomplete exact category.

Assume now that $\ccal$ is an $(n,1)$-category for some $n \geq 0$. Then $\Fun^\times(\ccal,\Spc)$  is an $(n-1)$-complicial hypercomplete exact category.  Applying proposition \ref{proposition eventually complicial is hyp} we obtain an equivalence
\[
\Fun^\times(\ccal, \Spc) = (\Fun^\times(\ccal, \Spc)_{\leq n-1})^\hyp = \Fun^\times(\ccal, \Spc_{\leq n-1})^\hyp.
\]
\end{example}

\begin{example}
Let $\ccal$ be a regular category and let $X$ be an object of $\ccal$. Then the forgetful functor $(\ccal^\hyp)_{/X} \rightarrow \Ccal^\hyp$ creates pullbacks and geometric realizations of groupoid objects, so we have that $(\ccal^\hyp)_{/X}$ is an exact category. The forgetful functor also creates effective epimorphisms and geometric realizations of Kan semisimplicial objects, so we have that $(\ccal^\hyp)_{/X}$ is hypercomplete.

  Assume now that $\ccal$ is an exact $(n,1)$-category for some $n \geq 0$. Then $\ccal^\hyp$ is $(n-1)$-complicial, and hence the same thing holds for $(\ccal^\hyp)_{/X}$. Applying propositions \ref{proposition hyp of bounded} and  \ref{proposition eventually complicial is hyp} we obtain an equivalence
  \[
  (\ccal^\hyp)_{/X} = (((\ccal^\hyp)_{/X})_{\leq n-1})^\hyp = (((\ccal^\hyp)_{\leq n-1})_{/X})^\hyp = (\ccal_{/X})^\hyp.
  \]
\end{example}

\ifx\inmain\undefined
\bibliographystyle{myamsalpha2}
\bibliography{References}
\fi


\section{Barr's embedding theorem}

Let $\ccal$ be a regular $(1,1)$-category. Then the bifunctor of evaluation $\ccal \times \Fun^\reg(\ccal, \Set) \rightarrow \Set$ induces a regular functor
\[
\ccal \rightarrow \Fun(\Fun^\reg(\ccal, \Set), \Set).
\]
One form of the Barr embedding theorem states that the above functor is fully faithful. A theorem of Makkai asserts that in the case when $\ccal$ is $1$-exact the image of this embedding consists of those functors  which preserve small products and filtered colimits.

 The goal of this section is to extend these results to the setting of higher category theory. Our main theorem states that for any bounded regular category $\ccal$ the canonical functor 
 \[
 \ccal \rightarrow \Fun(\Fun^\reg(\ccal, \Spc), \Spc)
\]
is fully faithful,  and provides a concrete description of its image.

We begin in \ref{subsection statement} by giving a precise statement of our result, and studying some consequences. We show here how one recovers the classical results of Barr and Makkai, as well as variants that apply to regular $(n,1)$-categories as well as Postnikov complete exact categories.

We give the proof of our embedding result in \ref{subsection proof embedding}. Our argument makes essential use of the category $\Pro(\ccal)$ of pro-objects in $\ccal$, whose properties we study in \ref{subsection weak topology}. As we shall see, the regular topology on $\ccal$ extends to a Grothendieck topology on $\Pro(\ccal)$, which we use to construct a hypercomplete exact category $\Pro(\ccal)^+$ containing $\Pro(\ccal)$. The relevance of $\Pro(\ccal)^+$ for our purposes is given by the fact that $\Fun^\reg(\ccal, \Spc)$ may be identified with the opposite of the full subcategory of $\Pro(\ccal)^+$ on the projective objects.


 \subsection{Statement of the theorem} \label{subsection statement}

We begin by giving a formulation of the embedding theorem.

\begin{notation}\label{notation upsilon}
Let $\ccal$ be a regular category. We denote by 
\[
\Upsilon_\ccal: \ccal \rightarrow \Fun(\Fun^\reg(\ccal, \Spc), \Spc)
\] the functor induced from the evaluation functor $\ccal \times \Fun^\reg(\ccal, \Spc) \rightarrow \Spc$.
\end{notation}

\begin{remark}\label{remark upsilon regular}
Let $\ccal$ be a regular category. Then the functor $\Upsilon_\ccal$ from notation \ref{notation upsilon} is regular.
\end{remark}

\begin{remark}
Let $f: \ccal \rightarrow \dcal$ be a regular functor between regular categories. Then we have commutative square of categories
\[
\begin{tikzcd}
\ccal \arrow{r}{\Upsilon_\ccal} \arrow{d}{f} & \Fun(\Fun^\reg(\ccal, \Spc), \Spc)\arrow{d}{}\\
\dcal \arrow{r}{\Upsilon_\dcal} &  \Fun(\Fun^\reg(\dcal, \Spc), \Spc)
\end{tikzcd}
\]
where the right vertical arrow is given by restriction along the functor $\Fun^\reg(\dcal, \Spc) \rightarrow \Fun^\reg(\ccal, \Spc)$ of restriction along $f$.
\end{remark}

\begin{theorem}\label{theorem barr embedding}
 Let $\ccal$ be a regular category.
\begin{enumerate}[\normalfont (1)]
\item The category $\Fun^\reg(\ccal, \Spc)$ is accessible and admits small products and filtered colimits.
\item Assume that $\ccal$ is bounded. Then the functor $\Upsilon_\ccal$ from notation \ref{notation upsilon} is fully faithful.
\item Assume that $\ccal$ is exact and bounded. Then a functor $F: \Fun^\reg(\ccal, \Spc) \rightarrow  \Spc$ belongs to the image of $\Upsilon_\ccal$ if and only if it preserves small products and filtered colimits, and factors through $\Spc_{\leq k}$ for some $k \geq 0$.
\end{enumerate}  
 \end{theorem}

The proof of theorem \ref{theorem barr embedding} will be given in \ref{subsection proof embedding}. We now discuss some consequences.
 
  \begin{corollary}
 Let $n \geq 0$ and let $\ccal$ be a regular $(n,1)$-category. 
\begin{enumerate}[\normalfont (1)]
\item The category $\Fun^\reg(\ccal, \Spc_{\leq n-1})$ is accessible and admits small products and filtered colimits.
\item The functor $\ccal \rightarrow \Fun(\Fun^\reg(\ccal, \Spc_{\leq n-1}), \Spc_{\leq n-1})$ induced by evaluation is fully faithful.
\item Assume that $\ccal$ is an exact $(n,1)$-category. Then a functor $F: \Fun^\reg(\ccal, \Spc_{\leq n-1}) \rightarrow \Spc_{\leq n-1}$ belongs to the image of the embedding from (2) if and only if it preserves small products and filtered colimits.
\end{enumerate} 
  \end{corollary}
\begin{proof}
 Since $\ccal$ is an $(n,1)$-category, composition with the inclusion $\Spc_{\leq n-1} \rightarrow \Spc$ induces an equivalence $\Fun^\reg(\ccal, \Spc_{\leq n-1}) = \Fun^\reg(\ccal, \Spc)$. Item (1) now follows directly from theorem \ref{theorem barr embedding}. Let $\Upsilon'_\ccal$ be the composite functor
\[
\ccal \rightarrow \Fun(\Fun^\reg(\ccal, \Spc_{\leq n-1}), \Spc_{\leq n-1}) \rightarrow \Fun(\Fun^\reg(\ccal, \Spc_{\leq n-1}), \Spc),
\]
 where the second morphism is given by composition with the inclusion $\Spc_{\leq n-1} \rightarrow \Spc$. The corollary will follow if we show the following:
 \begin{enumerate}[\normalfont (1')]
   \setcounter{enumi}{1}
\item $\Upsilon'_\ccal$ is fully faithful.
\item Assume that $\ccal$ is an exact $(n,1)$-category. Then a functor $F: \Fun^\reg(\ccal, \Spc_{\leq n-1}) \rightarrow \Spc$ belongs to the image of $\Upsilon'_\ccal$ if and only if it preserves small products and filtered colimits and factors through $\Spc_{\leq n-1}$.
\end{enumerate} 
Observe that the composition
\[
\ccal \xrightarrow{\Upsilon'_\ccal}  \Fun(\Fun^\reg(\ccal, \Spc_{\leq n-1}), \Spc) = \Fun(\Fun^\reg(\ccal, \Spc), \Spc)
\]
is equivalent to $\Upsilon_\ccal$. Item (2') now follows directly from theorem \ref{theorem barr embedding}. We now prove (3'). Consider the commutative square
\[
\begin{tikzcd}
\ccal \arrow{d}{} \arrow{r}{\Upsilon_\ccal} & \Fun(\Fun^\reg(\ccal, \Spc), \Spc) \arrow{d}{} \\
\ccal^\ex \arrow{r}{\Upsilon_{\ccal^\ex}} & \Fun(\Fun^\reg(\ccal^\ex, \Spc), \Spc) 
\end{tikzcd}
\]
where the vertical arrows are induced by the inclusion $\iota: \ccal \rightarrow \ccal^\ex$. Theorem \ref{teo properties completion} implies that restriction along $\iota$ induces an equivalence $\Fun^\reg(\ccal^\ex, \Spc) = \Fun^\reg(\ccal, \Spc)$. By proposition \ref{prop image is complicial} we have that $\ccal^\ex$ is bounded, and hence we may apply   theorem \ref{theorem barr embedding} to deduce that $\Upsilon_{\ccal^\ex}$ is fully faithful and identifies $(\ccal^\ex)_{\leq n-1}$ with the category of functors $\Fun^\reg(\ccal^\ex, \Spc) \rightarrow \Spc$ which preserve small products and filtered colimits and factor through $\Spc_{\leq n-1}$. Item (3') now follows from proposition \ref{prop ex completion of ex n cat}.
\end{proof}

 \begin{corollary}\label{corollary embed into presheaves}
  Let $\ccal$ be a regular category.
  \begin{enumerate}[\normalfont (1)]
  \item Assume that $\ccal$ is bounded. Then there exists a (small) category $\Ical$ and a fully faithful regular functor $\ccal \rightarrow \Pcal(\Ical)$.
  \item Let $n \geq 0$ and assume that $\ccal$ is an $(n,1)$-category.  Then there exists a (small) $(n,1)$-category $\Jcal$ and a fully faithful regular functor $\ccal \rightarrow \Pcal(\Jcal)_{\leq n-1}$.
\end{enumerate}   
 \end{corollary}
 \begin{proof}
  Combining theorem \ref{theorem barr embedding} and remark \ref{remark upsilon regular} we see that $\Upsilon_\ccal$ is fully faithful, regular, and factors through the full subcategory of $\Fun(\Fun^\reg(\ccal, \Spc), \Spc)^\acc$ of $\Fun(\Fun^\reg(\ccal, \Spc), \Spc)$ on the accessible functors. Since $\ccal$ is small we may pick a regular cardinal $\kappa$ such that $\Fun^\reg(\ccal, \Spc)$ is $\kappa$-accessible and the image of $\Upsilon_\ccal$ consists of $\kappa$-accessible functors. Part (1) now follows by setting $\Ical$ to be the full subcategory of $\Fun^\reg(\ccal, \Spc)$ on the $\kappa$-compact objects, with the inclusion  $\ccal \rightarrow \Pcal(\Ical)$ being given as the composition of $\Upsilon_\ccal$ with the functor $\Fun(\Fun^\reg(\ccal, \Spc), \Spc) \rightarrow \Pcal(\Ical)$ of restriction along the inclusion $\Ical \rightarrow \Fun^\reg(\ccal, \Spc)$.
  
  We now prove (2). Since $\ccal$ is bounded part (1) applies, so we may choose a category $\Ical$ and a fully faithful regular functor $\ccal \rightarrow \Pcal(\Ical)$. Let $\Jcal$ be the full subcategory of $\Pcal(\Ical)_{\leq n-1}$ on those objects of the form $\tau_{\leq n-1}(X)$ for some $X$ in $\Ical$. Item (2) now follows from the fact that we have an equivalence $\Pcal(\Ical)_{\leq n-1} = \Pcal(\Jcal)_{\leq n-1}$.
 \end{proof}

 The functor $\Upsilon_\ccal$ is not fully faithful for a general regular category $\ccal$. In the case when $\ccal$ is exact, the fully faithfulness of $\Upsilon_\ccal$ implies that every object $X$ of $\ccal$ is the limit of its Postnikov tower $\tau_{\leq -1}(X) \leftarrow \tau_{\leq 0}(X) \leftarrow \tau_{\leq 1}(X) \leftarrow \ldots$. Our next corollary shows that this is in fact a sufficient condition for the fully faithfulness of $\Upsilon_\ccal$:
 
 \begin{corollary}\label{coro upsilon when postnikov are limits}
 Let $\ccal$ be an exact category. Assume that every object of $\ccal$ is the limit of its Postnikov tower.
\begin{enumerate}[\normalfont (1)]
\item The functor $\Upsilon_\ccal: \ccal \rightarrow \Fun(\Fun^\reg(\ccal, \Spc), \Spc)$ is fully faithful.
\item There exists a (small) category $\Ical$ and a fully faithful regular functor $\ccal \rightarrow \Pcal(\Ical)$.
\end{enumerate} 
 \end{corollary}
 \begin{proof}
We first prove (1). We have a commutative square of categories
 \[
 \begin{tikzcd}
 \ccal \arrow{r}{\Upsilon_\ccal} & \Fun(\Fun^\reg(\ccal, \Spc), \Spc) \\
 \ccal_{< \infty} \arrow{u}{} \arrow{r}{\Upsilon_{\ccal_{<\infty}}} & \arrow{u}{} \Fun(\Fun^\reg(\ccal_{<\infty}, \Spc), \Spc).
 \end{tikzcd}
 \]
 The bottom horizontal arrow is fully faithful thanks to theorem \ref{theorem barr embedding}. Since $\Spc$ is Postnikov complete we have equivalences 
 \[
 \Fun^\reg(\ccal, \Spc) = \Fun^\reg(\widehat{\ccal}, \Spc) = \Fun^\reg(\smash{\widehat{(\ccal_{<\infty})}}, \Spc) = \Fun^\reg(\ccal_{<\infty},\Spc).
 \]
 It follows that the right vertical arrow in the above commutative square is an equivalence, and consequently $\Upsilon_\ccal$ is fully faithful when restricted to $\ccal_{<\infty}$. Assume now given two arbitrary objects $X, Y$ of $\ccal$. Using the fact that $\Upsilon_\ccal$ is regular we have
 \begin{align*}
 \Hom_{ \Fun(\Fun^\reg(\ccal, \Spc), \Spc) }(\Upsilon_\ccal(X), \Upsilon_\ccal(Y) )   &= \lim   \Hom_{ \Fun(\Fun^\reg(\ccal, \Spc), \Spc) }(\tau_{\leq k}(\Upsilon_\ccal(X)), \tau_{\leq k}(\Upsilon_\ccal(Y)))\\ &= \lim   \Hom_{ \Fun(\Fun^\reg(\ccal, \Spc), \Spc) }(\Upsilon_\ccal(\tau_{\leq k}(X)), \Upsilon_\ccal(\tau_{\leq k}(Y)))\\&= \lim \Hom_{\ccal}(\tau_{\leq k}(X), \tau_{\leq k}(Y) ). 
 \end{align*}
 The above agrees with $\Hom_{\ccal}(X, Y)$ since $Y$ is the limit of its Postnikov tower.
 
 We now prove (2). By theorem \ref{theorem barr embedding}, the functor $\Upsilon_{\ccal_{<\infty}}$ factors through the full subcategory of $ \Fun(\Fun^\reg(\ccal_{<\infty}, \Spc), \Spc)$ on the accessible functors. It follows that $\Upsilon_{\ccal}$ maps truncated objects to accessible functors. Assume now given an arbitrary object $X$ in $\ccal$. We have 
 \[
 \Upsilon_{\ccal}(X) = \lim \Upsilon_{\ccal}(X)_{\leq k} = \lim \Upsilon_\ccal(X_{\leq k}).
 \]
 Since accessible functors are closed under small limits we deduce that $\Upsilon_{\ccal}$ factors through the full subcategory of $\Fun(\Fun^\reg(\ccal, \Spc), \Spc)$ on the accessible functors. Let $\kappa$ be a regular cardinal such that $\Fun^\reg(\ccal, \Spc)$ is $\kappa$-accessible and the image of $\Upsilon_\ccal$ consists of $\kappa$-accessible functors. Then part (2) follows by letting $\Ical$ be the full subcategory of  $\Fun^\reg(\ccal, \Spc)$ on the $\kappa$-compact objects, with the inclusion $\ccal \rightarrow \Pcal(\Ical)$ being given as the composition of $\Upsilon_\ccal$ with the functor of restriction along the inclusion $\Ical \rightarrow \Fun^\reg(\ccal, \Spc)$.
 \end{proof}

 \begin{corollary}
 Let $\ccal$ be a Postnikov complete exact category. Then the functor $\Upsilon_\ccal: \ccal \rightarrow \Fun(\Fun^\reg(\ccal, \Spc), \Spc)$ is fully faithful, and a functor $F:\Fun^\reg(\ccal, \Spc) \rightarrow  \Spc$ belongs to the image of $\Upsilon_\ccal$ if and only if it preserves small products and filtered colimits.
 \end{corollary}
 \begin{proof}
 By corollary \ref{coro upsilon when postnikov are limits}, the functor $\Upsilon_\ccal$ is fully faithful. It remains to identify its image. We note that $\Fun^\reg(\ccal, \Spc)$ is closed under small products and filtered colimits inside $\Fun(\ccal, \Spc)$, and therefore  every functor in the image of $\Upsilon_\ccal$ preserves small products and filtered colimits. Assume now given a functor $F: \Fun^\reg(\ccal, \Spc)  \rightarrow \Spc$ with these properties.  Since $\ccal$ and $\Fun( \Fun^\reg(\ccal, \Spc), \Spc)$ are Postnikov complete and $\Upsilon_\ccal$ is fully faithful and regular, to prove that $F$ belongs to the image of $\Upsilon_\ccal$ it suffices to show that $\tau_{\leq k}(F)$ belongs to the image of $\Upsilon_\ccal$ for every $k\geq 0$. Replacing $F$ with $\tau_{\leq k}(F)$ we may reduce to the case when $F$ is truncated. Consider now the commutative square of categories
\[
 \begin{tikzcd}
 \ccal \arrow{r}{\Upsilon_\ccal} & \Fun(\Fun^\reg(\ccal, \Spc), \Spc) \\
 \ccal_{< \infty} \arrow{u}{} \arrow{r}{\Upsilon_{\ccal_{<\infty}}} & \arrow{u}{} \Fun(\Fun^\reg(\ccal_{<\infty}, \Spc), \Spc).
 \end{tikzcd}
 \]
Since $\ccal$ is Postnikov complete, the functor of restriction $\Fun^\reg(\ccal, \Spc) \rightarrow \Fun^\reg(\ccal_{<\infty}, \Spc)$ is an equivalence, and hence the right vertical arrow is an equivalence. The corollary now follows from an application of theorem \ref{theorem barr embedding} to $\ccal_{<\infty}$.
  \end{proof}
  

\subsection{The weak regular topology} \label{subsection weak topology}

A fundamental role in the proof of theorem \ref{theorem barr embedding} will be played by the category $\Pro(\ccal)$ of pro-objects on $\ccal$.  We now  make a systematic study of this category.

\begin{proposition}\label{prop commute tot and geometric realiz}
Let $\ccal$ be a category with finite limits. Assume that every object of $\ccal$ is truncated. Then geometric realizations of semisimplicial objects commute with cofiltered limits in $\Pro(\ccal)$.
\end{proposition}
\begin{proof}
Let $X_{\alpha, \bullet}$ be a cofiltered diagram of semisimplicial objects in $\Pro(\ccal)$ and let $X_{\bullet}$ be its limit. We wish to show that $|X_\bullet|$ is the limit of $|X_{\alpha, \bullet}|$. To do so it is enough to show that for every object $Y$ in $\ccal$ the canonical map $\colim \Hom_{\Pro(\ccal)}(|X_{\alpha, \bullet}|, Y) \rightarrow \Hom_{\Pro(\ccal)}( |X_{ \bullet}|, Y) $ is an isomorphism. We have 
\[
 \Hom_{\Pro(\ccal)}( |X_{ \bullet}|, Y) = \Tot  \Hom_{\Pro(\ccal)}( X_\bullet , Y) = \Tot \colim  \Hom_{\Pro(\ccal)}(  X_{\alpha, \bullet}, Y).
 \]
 Since $Y$ is truncated, there exists $k$ such that $\Hom_{\Pro(\ccal)}(  X_{\alpha, n}, Y)$ is $k$-truncated for all $\alpha$ and all $n$.  Using that filtered colimits and totalizations commute in $\Spc_{\leq k}$   we deduce
 \[
 \Hom_{\Pro(\ccal)}( |X_{ \bullet}|, Y)  = \colim   \Tot \Hom_{\Pro(\ccal)}(  X_{\alpha, \bullet}, Y) = \colim \Hom_{\Pro(\ccal)}( | X_{\alpha, \bullet}|, Y)
 \]
 as desired.
\end{proof}

 \begin{proposition}\label{prop pro is regular}
 Let $\ccal$ be a bounded regular category.
 \begin{enumerate}[\normalfont (1)]
 \item The category $\Pro(\ccal)$  is regular.
  \item The inclusion $\ccal \rightarrow \Pro(\ccal)$ is regular.
   \item The class of effective epimorphisms in $\Pro(\ccal)$ is the closure under cofiltered limits of the class of effective epimorphisms in $\ccal$.
 \end{enumerate}
  \end{proposition}
\begin{proof}
We first prove (1). Let $f: X \rightarrow Y$ and $g: Y' \rightarrow Y$ be morphisms in $\Pro(\ccal)$. Let $X_\bullet$ be the \v{C}ech nerve of $f$, and let $X'_\bullet$ be the \v{C}ech nerve of the base change of $f$ along $g$. We wish to show that $|X'_\bullet| = Y' \times_Y |X_\bullet|$. Write the cospan 
\[
g: Y' \rightarrow Y \leftarrow X: f
\]
 as a cofiltered limit of cospans 
\[
g_\alpha: Y'_\alpha \rightarrow Y_\alpha \leftarrow X_\alpha: f_\alpha
\]
 with $X_\alpha, Y_\alpha, Y'_\alpha$ in $\ccal$. For each $\alpha$ let $X_{\alpha,\bullet}$ be the \v{C}ech nerve of $f_\alpha$, and $X'_{\alpha, \bullet}$ be the \v{C}ech nerve of the base change of $f_\alpha$ along $g_\alpha$.  Since $\ccal$ is regular and the inclusion $\ccal \rightarrow \Pro(\ccal)$ preserves finite limits and geometric realizations we have that $|X'_{\alpha,\bullet}| = Y'_\alpha \times_{Y_\alpha} |X_{\alpha,\bullet}|$ for all $\alpha$. Applying proposition \ref{prop commute tot and geometric realiz} we deduce
\[
|X'_\bullet| = |\lim X'_{\alpha, \bullet}| = \lim |X'_{\alpha, \bullet}| = \lim ( Y'_\alpha \times_{Y_\alpha} |X_{\alpha,\bullet}| ) = Y' \times_Y \lim |X_{\alpha, \bullet}| = Y' \times_Y |\lim X_{\alpha, \bullet}|
\]
which agrees with $Y' \times_Y |X_\bullet|$, as desired.

Item (2) follows from the fact that the inclusion $\ccal \rightarrow \Pro(\ccal)$ preserves finite limits and geometric realizations.  We now prove (3). The assertion  that effective epimorphisms in $\Pro(\ccal)$ are closed under cofiltered limits is equivalent to the assertion that cofiltered limits in $\Pro(\ccal)$ are regular, which is a consequence of proposition \ref{prop commute tot and geometric realiz}. 

It remains to show that effective epimorphisms in $\Pro(\ccal)$ are generated under cofiltered limits by effective epimorphisms in $\ccal$.  
 Let $f: X \rightarrow Y$ be an effective epimorphism in $\Pro(\ccal)$.  Write $Y$ as the cofiltered limit of a diagram of objects $Y_\alpha$ in $\ccal$. Using the fact that cofiltered limits commute with finite colimits in $\Pro(\ccal)$ we may write $f$ as the cofiltered limit of its cobase changes $f_\alpha: X_\alpha \rightarrow Y_\alpha$. Since effective epimorphisms are the left class of a factorization system on $\Pro(\ccal)$, we have that $f_\alpha$ is an effective epimorphism for all $\alpha$.  Replacing $f$ with $f_\alpha$ we may now reduce to the case when $Y$ belongs to $\ccal$. In this case $f$ may be written as a cofiltered limit in $\Pro(\ccal)_{/Y}$ of a diagram of objects $f_\beta: X_\beta \rightarrow Y$ in $\ccal_{/Y}$. The effective epimorphism $f$ factors through $f_\beta$ for all $\beta$, so we have that $f_\beta$ is an effective epimorphism in $\ccal$ for all $\beta$, and our claim follows.
 \end{proof}
 
 \begin{definition}\label{definition transfinite precomposition}
 Let $\ccal$ be a large category with small inverse limits and let $S$ be a class of morphisms in $\ccal$. We say that a morphism $f: X \rightarrow Y$ is a transfinite precomposition of morphisms in $S$ if there exists an ordinal $\lambda$ and a functor $\lbrace \alpha: \alpha \leq \lambda \rbrace^\op \rightarrow \ccal$ sending each ordinal $\alpha \leq \lambda$ to an object $X_\alpha$ in $\ccal$, with the following properties:
 \begin{itemize}
 \item For each ordinal $\alpha < \lambda$ the map $X_{\alpha + 1} \rightarrow X_\alpha$ belongs to $S$.
 \item For each limit ordinal $\alpha \leq \lambda$ we have $X_\alpha = \lim_{\beta < \alpha} X_\beta$.
 \item The map $X_\lambda \rightarrow X_0$ is equivalent to $f$.
 \end{itemize} 
 \end{definition}
 
 \begin{definition}
 Let $\ccal$ be a regular category. We say that a morphism $f: X \rightarrow Y$ in $\Pro(\ccal)$ is a weak effective epimorphism if it is a transfinite precomposition of base changes of effective epimorphisms in $\ccal$. The weak regular topology on $\Pro(\ccal)$ is the Grothendieck topology where a sieve is a covering sieve if and only if it contains a weak effective epimorphism. 
 \end{definition}

\begin{proposition}\label{prop subcanonical}
Let $\ccal$ be a bounded regular category. Then every representable presheaf on $\Pro(\ccal)$ is a hypercomplete sheaf for the weak  regular topology.  
\end{proposition}
\begin{proof}
Since $\Pro(\ccal)$ is generated under limits by $\ccal$, and every object of $\ccal$ is truncated, it suffices to show that every object $Z$ in $\ccal$ is a sheaf for the weak regular topology. Proposition \ref{prop pro is regular} implies that every weak effective epimorphism in $\Pro(\ccal)$ is an effective epimorphism. The desired claim now follows from the fact that every object of $\Pro(\ccal)$ is a sheaf for the regular topology.
\end{proof}

\begin{notation}
Let $\ccal$ be a bounded regular category. We denote by $\Pro(\ccal)^+$ the full subcategory of the topos of hypercomplete sheaves of large spaces on $\Pro(\ccal)$ on the regular hypercomplete sheaves. 
\end{notation}

\begin{remark}
Let $\ccal$ be a bounded regular category. Then it follows from propositions \ref{prop singleton topology} and  \ref{prop subcanonical} that  $\Pro(\ccal)^+$ contains $\Pro(\ccal)$. Furthermore, an application of theorem \ref{theorem regular hypercomplete topoi} shows that  $\Pro(\ccal)^+$ is a hypercomplete exact category, and its inclusion into the topos of sheaves is regular. We note that every weak effective epimorphism in $\Pro(\ccal)$ is an effective epimorphism when regarded as a morphism in $\Pro(\ccal)^+$, and in particular the inclusion $\ccal \rightarrow \Pro(\ccal)^+$ is regular.
\end{remark}

Our next goal is to study a class of objects of $\Pro(\ccal)$ called weakly projective. As the name suggests, this is a weakening of the notion of projective object, which makes sense in any regular category.
 
 \begin{definition}\label{definition projective}
 Let $\ccal$ be a regular category. An object $P$ in $\ccal$ is said to be projective if the functor $\ccal \rightarrow \Spc$ corepresented by $P$ is regular. We say that $\ccal$ has enough projectives if for every object $X$ in $\ccal$ there exists a projective object $P$ and an effective epimorphism $P \rightarrow X$.
 \end{definition}
 
\begin{remark}
Let $\ccal$ be a regular category and let $P$ be an object in $\ccal$. Then the following are equivalent:
\begin{itemize}
\item $P$ is projective.
\item Let $X \rightarrow Y$ be an effective epimorphism in $\ccal^\ex$. Then every map $P \rightarrow Y$ lifts to $X$.
\item Every effective epimorphism $X \rightarrow P$ in $\ccal$ admits a section.
\end{itemize}
\end{remark}

\begin{definition}
 Let $\ccal$ be a regular category. An object $P$ in $\Pro(\ccal)$ is said to be weakly projective if the functor  $\ccal \rightarrow \Spc$ corepresented by $P$ is regular.
\end{definition}

\begin{notation}
Let $\ccal$ be a regular category. We denote by $\Pro(\ccal)_{\wproj}$ the full subcategory of $\Pro(\ccal)$ on the weakly projective objects.
\end{notation}

\begin{remark}\label{remark wproj closed under}
Let $\ccal$ be a regular category. Since effective epimorphisms in $\Spc$ are closed under small products and filtered colimits we have that $\Pro(\ccal)_{\wproj}$ is closed under small coproducts and cofiltered limits inside $\Pro(\ccal)$. 
\end{remark}

\begin{remark}
Let $\ccal$ be a bounded regular category. Then an object $P$ in $\Pro(\ccal)$ is weakly projective if and only if it is projective when regarded as an object in  $\Pro(\ccal)^+$ (in the sense of definition \ref{definition projective}). Assume now given a projective object $Q$ of $\Pro(\ccal)^+$. Since $Q$ is a regular sheaf on $\Pro(\ccal)$ there exists an effective epimorphism $X \rightarrow Q$ with $X$ in $\Pro(\ccal)$. The fact  that $Q$ is projective implies that $Q$ is a retract of $X$, and since $\Pro(\ccal)$ is idempotent complete we deduce that $Q$ belongs to $\Pro(\ccal)$. It follows that weakly projective objects of $\Pro(\ccal)$ are the same as projective objects of $\Pro(\ccal)^+$.
\end{remark}
 
While not every regular category admits enough projective objects, there are always enough weakly projective objects in $\Pro(\ccal)$, by virtue of the small object argument. In what follows we will in fact need the following more precise assertion:
 
\begin{proposition}\label{prop effective small object argument}
Let $\ccal$ be a regular category and let $\kappa$ be an uncountable regular cardinal such that $\ccal$ is $\kappa$-small. Let $X$ be a $\kappa$-cocompact object of $\Pro(\ccal)$. Then there exists a $\kappa$-cocompact weakly projective object $P$ in $\Pro(\ccal)$ and a weak effective epimorphism $P \rightarrow X$. 
 \end{proposition}
 \begin{proof}
 Fix a set $S = \lbrace f_\alpha: Y_\alpha \rightarrow Z_\alpha \rbrace$ of representatives for isomorphism classes of effective epimorphisms in $\ccal$ (so that each effective epimorphism is isomorphic to a unique $f_\alpha$). We construct an inverse system $ X_0 \leftarrow X_1 \leftarrow X_2 \leftarrow \ldots$ of objects of $\ccal$ by induction as follows:
 \begin{itemize}
 \item Set $X_0 = X$.
 \item Assume that $X_i$ has been constructed. For each $\alpha$ let $T_\alpha = \lbrace g^\alpha_\beta: X_i \rightarrow Z_\alpha\rbrace$ be a set of representatives for isomorphism classes of maps from $X_i$ to $Z_\alpha$. We define $X_{i+1}$ to be the object arising from the following fiber product:
 \[
 \begin{tikzcd}[column sep = huge]
 X_{i+1} \arrow{r}{} \arrow{d}{} &  X_i \arrow{d}{\prod_{(\alpha, \beta \in T_\alpha)} g^\alpha_\beta} \\
 \prod_{(\alpha, \beta \in T_\alpha)} Y_\alpha  \arrow{r}{\prod_{(\alpha, \beta \in T_\alpha)} f_\alpha} & \prod_{(\alpha, \beta \in T_\alpha)} Z_\alpha
\end{tikzcd} 
 \]
 \end{itemize}
 
 Let $P = \lim X_i$. The projection $P \rightarrow X$ is a transfinite precomposition of pullbacks of maps in $S$, and hence it is a weak effective epimorphism. Since the target of all maps in $S$ is cocompact we have that $P$ is weakly left orthogonal to all maps in $S$, and hence it is weakly projective. It remains to show that $P$ is $\kappa$-cocompact. Since $\kappa$ is uncountable it will suffice to show that $X_i$ is $\kappa$-cocompact for all $i \geq 0$. We argue by induction on $i$. The case $i = 0$ is part of our assumptions, so assume that $i > 0$ and that the assertion is known for $i-1$.   Since $\kappa$-cocompact objects of $\Pro(\ccal)$ are closed under $\kappa$-small limits, it will suffice to show that the set of pairs $(\alpha, \beta \in T_\alpha)$ arising in the definition of $X_i$ is $\kappa$-small. Using the fact that the cardinality of $S$ is less than $\kappa$, we may reduce to showing that for each $\alpha$ in $S$ the set $T_\alpha$ is $\kappa$-small. We claim that in fact the space $\Hom_{\Pro(\ccal)}(X_{i-1}, Z_\alpha)$ is $\kappa$-small for all $\alpha$. This follows from the fact that $X_{i-1}$ is $\kappa$-cocompact, since all Hom spaces in $\ccal$ are $\kappa$-small.
  \end{proof}

We finish with the following basic result on the structure of categories of weakly projective objects:

\begin{proposition}\label{prop accessibility of wproj}
Let $\ccal$ be a regular category. Then the category $(\Pro(\ccal)_{\wproj})^\op$ is accessible and admits small products and filtered colimits.
\end{proposition}
\begin{proof}
The fact that $(\Pro(\ccal)_{\wproj})^\op$ admits small products and filtered colimits follows from remark \ref{remark wproj closed under}. It remains to prove that it is accessible. Let $\kappa$ be an uncountable regular cardinal such that $\ccal$ is $\kappa$-small. We will show that $(\Pro(\ccal)_{\wproj})^\op$ is $\kappa$-accessible. Since $ \Pro(\ccal)_{\wproj} $ is closed under cofiltered limits inside $\Pro(\ccal)$, it will suffice to show that every weakly projective object $P$ in $\Pro(\ccal)$ may be written as the cofiltered limit of a diagram of $\kappa$-cocompact weakly projective objects.

Let $\dcal$ be the full subcategory of $\Pro(\ccal)_{P/}$ on the objects $P \rightarrow X$ such that $X$ is $\kappa$-cocompact, and let $\dcal_0$ be the full subcategory of $\dcal$ on those objects $P \rightarrow X$ such that $X$ is weakly projective. Then $P$ is the limit of the canonical functor $\dcal \rightarrow \Pro(\ccal)$. To finish the proof it will suffice to show that the inclusion $\dcal_0 \rightarrow \dcal$ is initial and that $\dcal_0$ is cofiltered.  

 Let $P \rightarrow X_j$ be a diagram of objects of $\dcal$ indexed by a finite category $\Jcal$. Applying proposition \ref{prop effective small object argument}   we deduce the existence of a weak effective epimorphism $Q\rightarrow \lim X_j$ where $Q$ is  a $\kappa$-cocompact weakly projective object of $\Pro(\ccal)$.  Since $P$ is weakly projective the canonical map $P \rightarrow \lim X_j$ lifts to a morphism $P \rightarrow Q$, which defines an object of $\dcal_0$. It follows that $\dcal$ and $\dcal_0$ are cofiltered categories and the inclusion $\dcal_0 \rightarrow \dcal$ is initial, as desired.
\end{proof}


\subsection{The proof of theorem \ifx\usehyperref\undefined \ref{theorem barr embedding} \else \ref*{theorem barr embedding} \fi}\label{subsection proof embedding}

We now turn to the proof of theorem \ref{theorem barr embedding}.

\begin{notation}
Let $\ecal$ be an accessible category with small products. We denote by $\Fun^\Uppi(\ecal, \Spc)^\acc$ the full subcategory of $\Fun(\ecal, \Spc)$ on the functors which are accessible and preserve small products.
\end{notation}

\begin{lemma}\label{lemma acessible product pres functors are closure}
Let $\ecal$ be an accessible category with small products.  Then:
\begin{enumerate}[\normalfont (1)]
\item  $\Fun^\Uppi(\ecal, \Spc)^\acc$ is closed under finite limits and geometric realizations of Kan semisimplicial objects inside $\Fun(\ecal, \Spc)$.
\item $\Fun^\Uppi(\ecal, \Spc)^\acc$ is the smallest full subcategory of $\Fun(\ecal, \Spc)$ containing the corepresentable functors and closed under geometric realizations of Kan semisimplicial objects. 
\end{enumerate}
\end{lemma}
\begin{proof}
To prove (1) it suffices to show that accessible (resp. small product preserving) functors $\ecal \rightarrow \Spc$ are closed under finite limits and geometric realizations of Kan semisimplicial objects inside $\Fun(\ecal, \Spc)$. The case of accessibility follows from the fact that $\kappa$-filtered colimits in $\Spc$ commute with colimits and finite limits. We now address the case of small products. The closure under finite limits is clear, so we only need to show closure  under geometric realizations of Kan semisimplicial objects. To prove this it is enough to show that small products in $\Spc$ commute with geometric realizations of Kan semisimplicial objects, which follows from the correspondence between hypercovers and Kan semisimplicial objects.

We now prove (2). It follows from (1) that $\Fun^\Uppi(\ecal, \Spc)^\acc$ is a hypercomplete exact category and its inclusion into $\Fun(\ecal, \Spc)$ is regular. Consequently, it is enough to show that every object in $\Fun^\Uppi(\ecal, \Spc)^\acc$ has a hypercover by corepresentable functors. To do so it suffices to prove that every object $X$ in $\Fun^\Uppi(\ecal, \Spc)^\acc$ receives an effective epimorphism from a corepresentable functor. Since $X$ is accessible  it is the left Kan extension of its restriction to a small subcategory of $\ecal$, which implies that it may be written as a small colimit of corepresentable functors in $\Fun(\ecal, \Spc)$. It follows that there exists a small family of objects $e_\alpha$ in $\ecal^\op$ and an effective epimorphism $p: \bigoplus e_\alpha \rightarrow X$ in $\Fun(\ecal, \Spc)$ (where here the direct sum is computed in the functor category). Since $X$ preserves small products, the map $p$ factors through a map $q: e \rightarrow X$, where $e$ is the direct sum of the objects $e_\alpha$ in $\ecal^\op$. Our claim now follows from the fact that $q$ is an effective epimorphism.
\end{proof}

\begin{lemma}\label{lemma hypercover by wproj}
Let $\ccal$ be a bounded regular category. Then every object of  $\Pro(\ccal)^+$ admits a hypercover by objects of $\Pro(\ccal)_{\wproj}$.
\end{lemma}
 \begin{proof}
It suffices to show that every object $X$ in $\Pro(\ccal)^+$  receives an effective epimorphism from an object in $\Pro(\ccal)_{\wproj}$. Since $X$ is a regular sheaf on $\Pro(\ccal)$ we have that $X$ receives an effective epimorphism $X' \rightarrow X$ where $X'$ belongs to $\Pro(\ccal)$. Replacing $X$ by $X'$ we may now reduce to the case when $X$ belongs to $\Pro(\ccal)$. This follows from proposition \ref{prop effective small object argument}.
 \end{proof}
 
\begin{lemma}\label{lemma identify proplus with funpi}
Let $\ccal$ be a bounded regular category.
\begin{enumerate}[\normalfont (1)]
\item The category $\Pro(\ccal)^+$ is locally small.
\item Let $G: \Pro(\ccal)^+ \rightarrow \Fun((\Pro(\ccal)_{\wproj})^\op, \Spc)$ be the functor that sends each object $X$ to the restriction to $(\Pro(\ccal)_{\wproj})^\op$ of $\Hom_{ \Pro(\ccal)^+}(-, X)$. Then $G$ is fully faithful.
\item The image of $G$ consists of the functors $ (\Pro(\ccal)_{\wproj})^\op \rightarrow \Spc$ which are accessible and preserve small products.
\end{enumerate}
\end{lemma}
\begin{proof}
We begin with a proof of (1). Let $X, Y$ be objects in $\Pro(\ccal)^+$. We wish to show that $\Hom_{\Pro(\ccal)^+}(X, Y)$ is small. By lemma \ref{lemma hypercover by wproj} we may pick hypercovers $P_\bullet$ and $Q_\bullet$    of $X$ and $Y$ by objects in $\Pro(\ccal)_{\wproj}$. We have $\Hom_{\Pro(\ccal)^+}(X, Y) = \Tot \Hom_{\Pro(\ccal)^+}(P_\bullet, Y)$. Replacing $X$ by $P_n$ we may now assume that $X$ belongs to $\Pro(\ccal)_{\wproj}$. In this case we have that $\Hom_{\Pro(\ccal)^+}(X, Q_\bullet)$ is a hypercover of $\Hom_{\Pro(\ccal)^+}(X, Y)$. Replacing $Y$ by $Q_n$ we may now assume that $X$ and $Y$ both belong to $\Pro(\ccal)_{\wproj}$, in which case the desired claim follows  from  the fact that $\Pro(\ccal)$ is locally small.

We now prove (2). We wish to show that for every pair of objects $X$ and $Y$ in $\Pro(\ccal)^+$ the canonical map 
\[
\Hom_{\Pro(\ccal)^+}(X, Y) \rightarrow \Hom_{ \Fun((\Pro(\ccal)_{\wproj})^\op, \Spc)}(G(X), G(Y)) 
\]
is an isomorphism. Observe that $G$ is a regular functor between hypercomplete exact categories. It follows that both sides commute with geometric realizations of Kan semisimplicial objects in the $X$ variable. Applying lemma \ref{lemma hypercover by wproj} we may reduce to the case when $X$ belongs to $\Pro(\ccal)_{\wproj}$. In this case $G(X)$ is a representable sheaf on $ \Fun(\Pro(\ccal)_{\wproj})^\op, \Spc)$. It follows that $X$ and $G(X)$ are both projective, so we have that both sides preserve geometric realizations of Kan semisimplicial objects in the $Y$ variable. Another application of lemma \ref{lemma hypercover by wproj} allows us to further reduce to the case when $X$ and $Y$ belong to $\Pro(\ccal)_{\wproj}$, in which case the desired claim follows from the fully faithfulness of the Yoneda embedding for $\Pro(\ccal)_{\wproj}$.

It remains to establish (3). It follows from (2) together with lemma \ref{lemma hypercover by wproj} that the image of $G$ is the closure of the representable presheaves under geometric realizations of Kan semisimplicial objects. Part (3) now follows from an application of lemma \ref{lemma acessible product pres functors are closure}.
\end{proof}

\begin{lemma}\label{lemma in ccal iff 1}
Let $\ccal$ be a bounded exact category and let $X$ be an object of $\Pro(\ccal)^+$. The following are equivalent:
\begin{enumerate}[\normalfont (a)]
\item $X$ belongs to $\ccal$.
\item $X$ is truncated, and the restriction to $\Pro(\ccal)^\op$ of the functor $\Hom_{\Pro(\ccal)^+}(-, X)$ preserves small filtered colimits.
\end{enumerate}
\end{lemma}
\begin{proof}
The fact that (a) implies (b) is clear. Since $X$ is truncated and the terminal object of $\Pro(\ccal)^+$ belongs to $\ccal$ we have that $X$ admits a $k$-truncated morphism $X \rightarrow Y$ into an object $Y$ in $\ccal$ for some $k \geq -2$. We will show that in this scenario the object $X$ belongs to $\ccal$, by induction on $k$. The case $k = -2$ is clear, so assume that $k > -2$ and that the assertion is known for $k-1$. Since $X$ is a regular sheaf on $\Pro(\ccal)$, we may find an effective epimorphism $p: X' \rightarrow X$ where $X'$ belongs to $\Pro(\ccal)$. Write $X'$ as the cofiltered limit of a diagram of objects $X'_\alpha$ in $\ccal$. Then our assumptions on $X$ guarantee that $p$ factors through $X'_\alpha$ for some $\alpha$. Replacing $X'$ with $X'_\alpha$ we may now assume that $X'$ belongs to $\ccal$. We have that $X$ is the geometric realization of the \v{C}ech nerve of $p$, so it suffices to prove that this \v{C}ech nerve belongs to $\ccal$. To do so it is enough to prove that $X' \times_X X'$ belongs to $\ccal$. This follows from our inductive hypothesis, since the canonical map $X' \times_X X' \rightarrow X' \times_Y X'$ is $(k-1)$-truncated.
\end{proof}

\begin{lemma}\label{lemma weakly projective cover}
Let $\ccal$ be a regular category. Let $\Ical$ be a cofiltered poset, and let $X: \Ical \rightarrow \Pro(\ccal)$ be a functor. Then there exists a functor $P: \Ical \rightarrow \Pro(\ccal)$ and a natural transformation $\eta: P \rightarrow X$ with the following properties:
\begin{enumerate}[\normalfont (a)]
\item $P(i)$ is weakly projective for all $i$ in $\Ical$.
\item The map $\eta(i): P(i) \rightarrow X(i)$ is a weak effective epimorphism for all $i$ in $\Ical$.
\end{enumerate}
\end{lemma}
\begin{proof}
Let $S$ be the collection of arrows in $\Fun(\Ical, \Pro(\ccal))$ whose images under the evaluation functors  $\ev_i: \Fun(\Ical, \Pro(\ccal)) \rightarrow \Pro(\ccal)$ are effective epimorphisms in $\ccal$. Applying the small object argument in the presentable category $\Fun(\Ical, \Pro(\ccal))^\op$ we obtain a natural transformation $\eta: P \rightarrow X$ with the following properties:
\begin{enumerate}[\normalfont (a')]
\item $P$ is weakly left orthogonal to all maps in $S$.
\item $\eta$ is a transfinite precomposition of pullbacks of maps in $S$.
\end{enumerate}
We claim that $\eta$ has the desired properties. The fact that (b) is satisfied follows directly from (b') and the definitions, since the evaluation functors preserve small limits. To prove (a) it will suffice to show that the functors $\ev_i$ map objects which are weakly left orthogonal to $S$ to weakly projective objects. To do so it is enough to prove that for every $i$ in $\Ical$ the functor $R_i: \Pro(\ccal) \rightarrow \Fun(\Ical, \Pro(\ccal))$ which is right adjoint  $\ev_i$ sends effective epimorphisms in $\ccal$ to maps in $S$. In other words, we wish to show that $\ev_j \circ R_i$ maps effective epimorphisms in $\ccal$ to effective epimorphisms in $\ccal$, for all pairs $i, j$. Indeed, $\ev_j \circ R_i$ is the terminal endofunctor of $\Pro(\ccal)$ if there is no map $j \rightarrow i$, and the identity of $\Pro(\ccal)$ otherwise.
\end{proof}
  
\begin{lemma}\label{lemma characterize C}
Let $\ccal$ be a bounded exact category and let $X$ be an object of $\Pro(\ccal)^+$. The following are equivalent:
\begin{enumerate}[\normalfont (a)]
\item $X$ belongs to $\ccal$.
\item $X$ is truncated, and the restriction to $(\Pro(\ccal)_{\wproj})^\op$ of the functor $\Hom_{\Pro(\ccal)^+}(-, X)$ preserves small filtered colimits.
\end{enumerate}
\end{lemma}
\begin{proof}
By lemma \ref{lemma in ccal iff 1}, we reduce to showing that if $X$ satisfies (b) then the restriction to $\Pro(\ccal)^\op$ of  $\Hom_{\Pro(\ccal)^+}(-, X)$ preserves small filtered colimits. Let $Y_\alpha$ be a diagram of objects of $\Pro(\ccal)$ indexed by a cofiltered poset $\Ical$, and let $Y$ be its limit. We wish to show that the canonical map $\colim \Hom_{\Pro(\ccal)^+}(Y_\alpha, X) \rightarrow  \Hom_{\Pro(\ccal)^+}(Y , X)$ is an isomorphism.

 Applying lemma \ref{lemma weakly projective cover} we may inductively construct a semisimplicial hypercover $P_{\alpha, \bullet}$ of $Y_{\alpha}$ in $\Fun(\Ical, \Pro(\ccal)^+)$ such that $P_{\alpha, n}$ is a weakly projective object of $\Pro(\ccal)$ for all $\alpha$ and all $n$. The fact that $\Pro(\ccal)^+$ is hypercomplete implies that $|P_{\alpha, \bullet}| = Y_\alpha$. Let $P_{\bullet}$ be the limit of $P_{\alpha, \bullet}$ and note that $Y = |P_\bullet|$, by proposition \ref{prop commute tot and geometric realiz}.  We now have
 \[
 \Hom_{\Pro(\ccal)^+}(Y , X) = \Tot  \Hom_{\Pro(\ccal)^+}(P_{\bullet} , X) = \Tot \colim \Hom_{\Pro(\ccal)^+}(P_{\alpha, \bullet} , X).
 \]
 Since $X$ is truncated there exists $k$ such that $\Hom_{\Pro(\ccal)^+}(P_{\alpha, n} , X)$ belongs to $\Spc_{\leq k}$ for all $\alpha$ and all $n$. Using the fact that filtered colimits and totalizations commute in $\Spc_{\leq k}$  we deduce
 \[
  \Hom_{\Pro(\ccal)^+}(Y , X) =   \colim \Tot \Hom_{\Pro(\ccal)^+}(P_{\alpha, \bullet} , X) = \colim \Hom_{\Pro(\ccal)^+}(Y_\alpha , X) 
\]
as desired.
\end{proof}
 
 \begin{proof}[Proof of theorem \ref{theorem barr embedding}]
 Let $\Fun^\lex(\ccal, \Spc)$ be the full subcategory of $\Fun(\ccal, \Spc)$ on the left exact functors. Then we have an equivalence $\Pro(\ccal)^\op = \Fun^\lex(\ccal, \Spc)$ given by sending each object $X$ in $\Pro(\ccal)^\op$ to the restriction to $\ccal$ of  $\Hom_{\Pro(\ccal)}(X, -)$. This restricts to an equivalence $(\Pro(\ccal)_{\wproj})^\op = \Fun^\reg(\ccal, \Spc)$. Part (1) of the theorem is now a direct consequence of proposition \ref{prop accessibility of wproj}.    We may identify the functor $\Upsilon_\ccal$ with the composite functor
 \[
 \ccal \hookrightarrow \Pro(\ccal)^+ \xrightarrow{G} \Fun( (\Pro(\ccal)_{\wproj})^\op, \Spc)
 \]
 where $G$ sends each object $X$ in $\Pro(\ccal)^+$ to the restriction to $ (\Pro(\ccal)_{\wproj})^\op$ of the functor $\Hom_{\Pro(\ccal)^+}(-, X)$. Part (2) of the theorem now follows from part (2) of lemma \ref{lemma identify proplus with funpi}, while part (3) of the theorem follows from a combination of lemma \ref{lemma characterize C} together with part (3) of lemma  \ref{lemma identify proplus with funpi}.
 \end{proof}

\ifx\inmain\undefined
\bibliographystyle{myamsalpha2}
\bibliography{References}
\fi


\section{Additive exact categories}\label{section exact}

The goal of this section  is to study the specialization of the theory of exact categories and exact completions to the additive context. A central role in this section will be played by the theory of (finitely complete) prestable categories from \cite{SAG} appendix C, which we regard as $\infty$-categorical analogues of abelian categories.

We begin in \ref{subsection prestables} by discussing the connection between exactness and prestability. Our main result on this topic is theorem \ref{teo prestable iff exact and additive}, which states that a finitely complete category is prestable if and only if it is exact and additive, and identifies those finitely complete prestable categories which are hypercomplete in the sense of definition \ref{definition hypercomplete exact}. As an application of this result together with the $\infty$-categorical version of Barr's embedding theorem (theorem \ref{theorem barr embedding}) we prove an extension of Freyd-Mitchell's embedding theorem to the setting of finitely complete prestable categories, see corollary \ref{coro freyd mitchell}.

In \ref{subsection abelian n} we explore a family of statements which interpolate between the $(1,1)$-categorical description of exact additive categories (theorem \ref{theorem classical}) and the $(\infty,1)$-categorical description. We introduce for each $n \geq 1$ a class of additive categories which we call abelian $(n,1)$-categories, whose axioms directly generalize the axioms of abelian $(1,1)$-categories. We then generalize theorem \ref{theorem classical} to all values of $n$, by showing that an additive $(n,1)$-category is abelian if and only if it is exact (theorem \ref{teo abelian n}). As a consequence, we deduce a generalization of the Freyd-Mitchell embedding theorem to the setting of abelian $(n,1)$-categories.

Finally, in \ref{subsection deriving} we discuss the procedure  of deriving abelian $(n,1)$-categories. Our main result on this topic is theorem \ref{theorem deriving}, which constructs for each abelian $(n,1)$-category $\ccal$ a pair of finitely complete prestable categories which we regard as the connective bounded and unbounded derived categories of $\ccal$. We obtain these by specializing the theory of exact completions and hypercompletions, and in particular we obtain a universal property for these derived categories.


\subsection{Prestable categories}\label{subsection prestables}

We begin by recalling the definition of prestable category from \cite{SAG} appendix C.

\begin{definition}\label{def prestable}
We say that a category $\Ccal$ is prestable if:
\begin{enumerate}[\normalfont(1)]
\item $\Ccal$ is pointed (that is, it admits a zero object) and admits finite colimits.
\item The suspension functor $\Sigma: \Ccal \rightarrow \Ccal$ is fully faithful.
\item Every morphism in $\ccal$ of the form $f: Y \rightarrow \Sigma Z$  admits a fiber, and furthermore the fiber sequence $\operatorname{fib}(f) \rightarrow Y \rightarrow \Sigma Z$ is also a cofiber sequence.
\end{enumerate}
\end{definition}

\begin{remark}\label{remark prestable t structure}
It is shown in \cite{SAG} proposition C.1.2.9 that a finitely complete category $\Ccal$ is prestable if and only if there is a stable category $\Dcal$ equipped with a t-structure, and an equivalence $\Ccal = \Dcal_{\geq 0}$. This provides an ample source of prestable categories.
\end{remark}

\begin{example}\label{example lmod is prestable}
Let $A$ be a connective ring spectrum. Then the category $\LMod_A^\cn$ of connective left $A$-modules is prestable.
\end{example}

\begin{example}
Let $\ccal$ be a stable category. Then $\ccal$ is prestable.
\end{example}

\begin{remark}
While there exist many prestable categories that do not admit finite limits, our focus in this paper will be on finitely complete prestable categories, which we regard as $\infty$-categorical analogues of abelian categories. A  non necessarily finitely complete category with finite colimits $\ccal$ is prestable if and only if the presentable category $\Ind(\ccal)$ is prestable.  In light of this, general prestable categories may be thought of as analogues the notion of ind-abelian category from \cite{Sch}. 
\end{remark}

The following is \cite{SAG} definition C.1.2.12:

\begin{definition}
Let $\Ccal$ be a finitely complete prestable category. We say that $\ccal$ is separated if whenever $X$ is an object of $\ccal$ such that $\tau_{\leq k}(X) = 0$ for all $k \geq 0$ then $X = 0$.
\end{definition}

We are now ready to formulate our main result connecting prestability and exactness:

\begin{theorem}\label{teo prestable iff exact and additive}
Let $\Ccal$ be a finitely complete category. 
\begin{enumerate}[\normalfont (1)]
\item The category $\Ccal$ is prestable if and only if it is exact and additive. 
\item The category $\Ccal$ is stable if and only if it is exact, additive, and every morphism in $\Ccal$ is an effective epimorphism.
\item Assume that $\ccal$ is prestable. Then $\ccal$ is  hypercomplete  if and only if it is separated and admits geometric realizations of simplicial objects.
\item Assume that $\ccal$ is prestable, and let $F: \ccal \rightarrow \dcal$ be a functor into another finitely complete prestable category $\dcal$. Then $F$ is regular if and only if it is exact (i.e., preserves finite limits and colimits).
\end{enumerate}
\end{theorem}

Before going into the proof, we record the following consequence, which is a prestable version of the Freyd-Mitchell embedding theorem.

 \begin{corollary}\label{coro freyd mitchell}
 Let $\ccal$ be a finitely complete prestable category. Assume that every object of $\ccal$ is the limit of its Postnikov tower. Then there exists a connective commutative ring spectrum $A$ and a fully faithful exact embedding $\ccal \rightarrow \LMod_A^\cn$.
 \end{corollary}
 \begin{proof}
 Combining theorem  \ref{teo prestable iff exact and additive} and corollary \ref{coro upsilon when postnikov are limits}, we may find a (small) category $\Ical$ and a fully faithful regular functor $F: \ccal \rightarrow \Pcal(\Ical)$. Consider the category $\Sp^\cn \otimes \Pcal(\Ical)$ of grouplike $E_\infty$-monoids in $\Pcal(\Ical)$ and let $U :\Sp^\cn \otimes \Pcal(\Ical) \rightarrow \Pcal(\Ical)$ be the forgetful functor. Since $F$ preserves finite products and $\ccal$ is additive we have an induced fully faithful functor  $G: \ccal \rightarrow \Sp^\cn \otimes \Pcal(\Ical)$ such that $F = U \circ G$. Since $U$ creates finite limits and geometric realizations we have that $\Sp^\cn \otimes \Pcal(\Ical)$ is an exact category and $G$ is a regular functor. 
 
 Observe that $\Sp^\cn \otimes \Pcal(\Ical)$ is a separated Grothendieck prestable category. Let   $\lbrace X_\alpha \rbrace$ be a set of representatives for the isomorphism classes of objects of $\Ical$, and let $P = \bigoplus_\alpha \SS \otimes X_\alpha$ be the direct sum of the free grouplike $E_\infty$-monoids on the objects $X_\alpha$. Then $P$ is a projective generator of $\Sp^\cn \otimes \Pcal(\Ical)$. Let $A$ be the opposite of the connective ring spectrum of endomorphisms of $P$. Then by \cite{SAG} theorem C.2.1.6 the canonical functor 
 \[
 \Hom^\enh_{\Sp^\cn \otimes \Pcal(\Ical)}(P, -): \Sp^\cn \otimes \Pcal(\Ical) \rightarrow \LMod_{A}^\cn
 \]
 is fully faithful, and since $P$ is projective the above is also regular. The desired embedding is now obtained as the composition of $ \Hom^\enh_{\Sp^\cn \otimes \Pcal(\Ical)}(P, -)$ and $G$, with exactness following from part (4) of theorem \ref{teo prestable iff exact and additive}.
 \end{proof}

The remainder of this section is devoted to the proof of theorem \ref{teo prestable iff exact and additive}. The basic observation is that in an additive context, morphisms and groupoid objects are related by a restricted version of the Dold-Kan correspondence.

\begin{notation}\label{notation bar}
Let $\Ccal$ be a category with finite coproducts and let $Y \leftarrow X \rightarrow Y'$ be a span in $\Ccal$. We denote by $\Bar^{\amalg}_X(Y, Y')_\bullet$ the relative Bar construction of $Y$ and $Y'$ over $X$, with respect to the cocartesian symmetric monoidal structure. In other words, $\Bar^{\amalg}_X(Y, Y')_\bullet$ is the simplicial diagram with entries $\Bar^{\amalg}_X(Y, Y')_n = Y \oplus X^{{\amalg} n} \oplus Y'$.

Assume now given a commutative square
\[
\begin{tikzcd}
X \arrow{d}{} \arrow{r}{} & Y \arrow{d}{} \\ Y' \arrow{r}{} & Z
\end{tikzcd}
\]
in $\Ccal$. Then there is an induced augmentation $\Bar^\amalg_X(Y, Y')_\bullet \rightarrow Z$. We denote by $\Bar^\amalg_X(Y, Y' ; Z)_\bullet$ the corresponding augmented simplicial diagram.
\end{notation}

\begin{remark}\label{remark lke}
Let $\Ccal$ be a category with finite coproducts. Then a span $Y \leftarrow X \rightarrow Y'$ in $\Ccal$ corresponds to a functor  $\Lambda^2_0 \rightarrow \Ccal$. The simplicial diagram $\Bar^\amalg_X(Y, Y')_\bullet$ is the left Kan extension of this functor along the functor $\Lambda^2_0 \rightarrow \Delta^\op$ 
that picks out the span $[0] \xleftarrow{1} [1] \xrightarrow{0} [0]$. Similarly, given a commutative square in $\Ccal$ as in notation \ref{notation bar}, the augmented simplicial diagram $\Bar^\amalg_X(Y, Y';Z)_\bullet$ is its left Kan extension along the functor $[1] \times [1] \rightarrow \Delta^\op_+$ that picks out the commutative square
\[
\begin{tikzcd}
{[1]} \arrow{d}{0} \arrow{r}{1} & {[0]} \arrow{d}{} \\ {[0]} \arrow{r}{} & {[-1]}.
\end{tikzcd}
\]
In particular, the given commutative square is a pushout if and only if the associated augmented simplicial diagram $\Bar^\amalg_X(Y, Y' ; Z)_\bullet$ is a colimit diagram.
\end{remark}

\begin{notation}
Let $\Ccal$ be an additive category. We denote by 
\[
\dk : \Ccal^{[1]} \rightarrow \Ccal^{\Delta^\op}
\]
the functor that sends an arrow $X \rightarrow Y$ to the simplicial object $\Bar^\amalg_X(Y, 0)_\bullet$. Let $\Ccal^\seq$ be the full subcategory of $\Ccal^{[1] \times [1]}$ on those commutative squares
\[
\begin{tikzcd}
X \arrow{d}{} \arrow{r}{} & Y \arrow{d}{} \\
Y' \arrow{r}{} & Z
\end{tikzcd}
\]
 with the property that $Y' = 0$. In other words, $\Ccal^\seq$ is the category of pairs of composable arrows $X \rightarrow Y \rightarrow Z$ equipped with a nullhomotopy for the composition.  We denote by
 \[
 \dk^+ : \Ccal^\seq \rightarrow \Ccal^{\Delta^{\op}_+}
 \]
 the functor that sends a sequence $X \rightarrow Y \rightarrow Z$ to the augmented simplicial object $\Bar^\amalg_X(Y, 0; Z)_\bullet$.
\end{notation}

\begin{remark}\label{remark pullback}
Let $\Ccal$ be an additive category. Then we have a commutative square of categories
\[
\begin{tikzcd}
\Ccal^\seq \arrow{r}{\dk^+}\arrow{d}{} & \Ccal^{\Delta_+^\op} \arrow{d}{} \\
\Ccal^{[1]} \arrow{r}{\dk} & \Ccal^{\Delta^\op}
\end{tikzcd}
\]
where the left vertical arrow sends a sequence $X \rightarrow Y \rightarrow Z$ to the map $X \rightarrow Y$, and the right vertical arrow sends an augmented simplicial diagram to the underlying simplicial diagram.
\end{remark}

\begin{definition}
We say that an additive category $\Ccal$ admits complements if for every map $s: U \rightarrow X$ admitting a retraction there exists a map $s': U' \rightarrow X$ such that $X = U \oplus U'$. In this case we call $s'$ a complement for $s$.
\end{definition}

\begin{remark}
Let $\Ccal$ be an additive category and let $s: U \rightarrow X$ be a map admitting a retraction $r$. If $r$ admits a fiber then we have $X = U \oplus \fib(r)$. Conversely, if $s': U' \rightarrow X$ is a complement for $s$, then $r$ admits a fiber, and there exists an isomorphism between $\fib(r)$ and $U'$.
\end{remark}

\begin{lemma}\label{lemma sobre DK}
Let $\Ccal$ be an additive category admitting complements. Then:
\begin{enumerate}[\normalfont (1)]
\item The functor $\dk: \Ccal^{[1]} \rightarrow \Ccal^{\Delta^\op}$ is fully faithful.
\item A simplicial object $X_\bullet$ belongs to the image of $\dk$ if and only if it is a groupoid.
\item The functor $\dk^+: \Ccal^{\seq} \rightarrow \Ccal^{\Delta^\op_+}$ is fully faithful.
\item An augmented simplicial object belongs to the image of $\dk^+$ if and only if the underlying simplicial object belongs to the image of $\dk$.
\item Let $\alpha$ be an element of $\Ccal^\seq$. Then $\alpha$ is a cofiber sequence if and only if $\dk^+(\alpha)$ is a colimit diagram.
\item Let $\alpha$ be an element of $\Ccal^\seq$. Then $\alpha$ is a fiber sequence if and only if $\dk^+(\alpha)$ is a \v{C}ech nerve.
\end{enumerate}
\end{lemma}
\begin{proof}
We first prove item (1). We may write $\dk$ as a composition
\[
\Ccal^{[1]} \xrightarrow{\iota} \Ccal^{\Lambda^2_0} \xrightarrow{\Bar^\amalg} \Ccal^{\Delta^\op}
\]
where $\iota$ sends an arrow $X \rightarrow Y$ to the span $Y \leftarrow X \rightarrow 0$, and $\Bar^\amalg$ sends a span $Y \leftarrow X \rightarrow Y'$ to $\Bar^\amalg_X(Y, Y')_\bullet$. The functor $\Bar^\amalg$ has a right adjoint which sends a simplicial object $S_\bullet$ to the span $S_0 \xleftarrow{d_0} S_1 \xrightarrow{d_1} S_0$. Since $\Ccal$ admits complements and $d_1: S_1 \rightarrow S_0$ admits a section, there exists a fiber for $d_1$. It follows that $\dk$ has a right adjoint which sends a simplicial object $S_\bullet$ to the composite map $\fib(d_1) \rightarrow S_1 \xrightarrow{d_0} S_0$. To show that $\dk$ is fully faithful we will show that the unit of this adjunction is invertible. This reduces to the assertion that if $X \rightarrow Y$ is a morphism and $S_\bullet$ is its image under $\dk$, then the induced sequence $X \rightarrow S_1 \xrightarrow{d_1} S_0$ is a fiber sequence, which is clear from the definitions.

We now prove item (2). We first show that the image of an arrow $f: X \rightarrow Y$ under $\dk$ is a groupoid. Since $\ccal$ is additive, we may endow $X$ and $Y$ with $E_\infty$-monoid structures, and make $Y$ into an $X$-module via restriction of scalars along $f$. Similarly, $0$ becomes an $X$-module via the projection $X \rightarrow 0$. We may identify the simplicial object $\Bar^\amalg_X(Y, 0)_\bullet$ with the relative Bar construction $\Bar^\times_X(Y, 0)_\bullet$ computed with respect to the cartesian symmetric monoidal structure on $\ccal$. The projection $Y \rightarrow 0$ induces a map of simplicial objects $\rho: \Bar^\times_X(Y, 0)_\bullet \rightarrow \Bar^\times_X(0, 0)_\bullet$. We note that $\Bar^\times_X(0, 0)_\bullet$ is a monoid object in $\ccal$. This is in fact a group object, since the $E_\infty$-monoid structure on $X$ is grouplike. Consequently, to show that $\Bar^\times_X(Y, 0)_\bullet$ is a groupoid object it will suffice to show that for every $n \geq 0$ the commutative square
\[
\begin{tikzcd}
 \Bar^\times_X(Y, 0)_n \arrow{r}{} \arrow{d}{} & \Bar^\times_X(Y, 0)_0 \arrow{d}{}\\
 \Bar^\times_X(0, 0)_n \arrow{r}{} & \Bar^\times_X(0, 0)_0 
 \end{tikzcd}
\]
induced from $\rho$ and the map $0: [0] \rightarrow [n]$ is a pullback square. Unwinding the definitions, the above recovers the commutative square
\[
\begin{tikzcd}
Y \times X^{n-1} \arrow{d}{} \arrow{r}{} & Y \arrow{d}{} \\
X^{n-1} \arrow{r}{} & 0
\end{tikzcd}
\]
where the top and left maps are the projections. This is a pullback square, as desired.

Assume now given a groupoid $S_\bullet$ in $\Ccal$ and let $\alpha: X \rightarrow Y$ be the image of $S_\bullet$ under the right adjoint to $\dk$, so that $Y = S_0$, $X = \fib(d_1)$, and $\alpha$ is the restriction of $d_0$. We claim that the morphism $\dk(\alpha) \rightarrow S_\bullet$ is an equivalence. Since both of these are groupoids we may reduce to showing that this map is an equivalence on $0$-simplices and $1$-simplices. Indeed, on $0$-simplices this recovers the isomorphism $Y \rightarrow S_0$, and on $1$-simplices it recovers the canonical morphism 
\[
X \oplus Y = \fib(d_1) \oplus S_0 \rightarrow S_1
\]
which is an isomorphism since the map $\fib(d_1) \rightarrow S_1$ is a complement for $s_0 : S_0 \rightarrow S_1$.

The proofs of (3) and (4) are analogous to (1) and (2), and are left to the reader.

The fact that (5) holds was already observed in remark \ref{remark lke}. We now prove item (6). Replacing $\Ccal$ by its free completion under sifted colimits if necessary, we may reduce to the case when $\Ccal$ has all finite limits. We have a commutative square
\[
\begin{tikzcd}
\Ccal^\seq \arrow{d}{} \arrow{r}{\dk^+} &  \Ccal^{\Delta^\op_+} \arrow{d}{} \\
\Ccal^{[1]} \arrow{r}{\id} & \Ccal^{[1]}
\end{tikzcd}
\]
where the left vertical arrow sends a sequence $X \rightarrow Y \rightarrow Z$ to $Y \rightarrow Z$, and the right vertical arrow sends an augmented simplicial object $S_\bullet$ to $S_0 \rightarrow S_{-1}$. The vertical arrows restrict to equivalences on the full subcategories of fiber sequences and \v{C}ech nerves, respectively. It therefore suffices to show that if $\dk^+(\alpha)$ is a \v{C}ech nerve, then $\alpha$ is a fiber sequence. Since $\dk^+$ is fully faithful and has a right adjoint, we may reduce to showing that its right adjoint maps \v{C}ech nerves to fiber sequences. This follows from the commutativity of the above square  by passing to right adjoints of all arrows. 
\end{proof}

\begin{proof}[Proof of theorem \ref{teo prestable iff exact and additive}]
We begin with a proof of (1). Assume first that $\Ccal$ is prestable. The fact that $\Ccal$ is additive is \cite{SAG} example C.1.5.6. We now show that $\Ccal$ is exact. Consider first the case when $\Ccal$ is Grothendieck prestable (in other words, $\ccal$ is presentable and has left exact filtered colimits, see \cite{SAG} definition C.1.4.2). Then using \cite{SAG} theorem C.2.4.1 we see that $\Ccal$ is a left exact localization of the category $\LMod_A^\cn$ of connective modules over a connective ring spectrum $A$. Since exactness is preserved by passage to left exact localizations, we may reduce to the case $\Ccal = \LMod_A^\cn$. This follows from the fact that $\Spc$ is exact (since it is a topos) together with the fact that the forgetful functor $\LMod_A^\cn \rightarrow \Spc$ creates finite limits and geometric realizations.

We now address the case when $\Ccal$ is a general prestable category. Since $\Ind(\Ccal)$ is Grothendieck prestable, we already know it to be exact, so it suffices to show that $\Ccal$ is closed under finite limits and geometric realizations of groupoid objects inside $\Ind(\Ccal)$. The fact that $\Ccal$ is closed under finite limits is clear. By lemma \ref{lemma sobre DK}, the assertion that $\Ccal$ is closed under geometric realizations of groupoids is equivalent to the fact that $\Ccal$ is closed under cofibers in $\Ind(\Ccal)$. This is clear since cofibers are finite colimits.

Assume now that $\Ccal$ is exact and additive. By lemma \ref{lemma sobre DK} we have that $\Ccal$ admits cofibers, and hence $\Ccal$ admits finite colimits. We now verify conditions (2) and (3) from definition \ref{def prestable}:
\begin{enumerate}[\normalfont (1)]
\setcounter{enumi}{1}
\item Let $X$ be an object of $\Ccal$ and let $X_\bullet$ be the groupoid corresponding to the map $X \rightarrow 0$ under $\dk$. Then by lemma \ref{lemma sobre DK} the unit map $X \rightarrow \Omega \Sigma X$ agrees with the value on $[1]$ of the canonical map of groupoids from $X_\bullet$ to the \v{C}ech nerve of the morphism $0 \rightarrow |X_\bullet|$. This is an equivalence since $\Ccal$ is exact.
\item Let $f: Y \rightarrow \Sigma Z$ be a morphism in $\Ccal$. Then the sequence $\operatorname{fib}(f) \rightarrow Y \rightarrow \Sigma Z$ corresponds under $\dk^+$ to the augmented \v{C}ech nerve of $f$. To show that this sequence is a cofiber sequence we must show that $f$ is an effective epimorphism. In fact, we claim that the zero map $0 \rightarrow \Sigma Z $ is an effective epimorphism. Indeed, as observed in the previous item, $\Sigma Z $ is the realization of a groupoid acting on $0$.
\end{enumerate}

We now address (2). In light of (1), it suffices to show that if $\Ccal$ is a finitely complete prestable category, then $\Ccal$ is stable if and only if every morphism in $\Ccal$ is an effective epimorphism. It follows from proposition \ref{proposition 0 truncated are exact} that a morphism $f: X \rightarrow Y$  in $\Ccal$ is an effective epimorphism if and only if it induces an effective epimorphism  $\tau_{\leq 0}(X) \rightarrow \tau_{\leq 0}(Y)$. If every morphism in $\Ccal$ is an effective epimorphism then for every $X$ in $\Ccal_{\leq 0}$ we have that $0 \rightarrow X$ is an effective epimorphism, and therefore $X = 0$. Hence $\Ccal_{\leq 0} = 0$, which means that every object of $\Ccal$ is $\infty$-connective, and therefore $\Ccal$ is stable. Conversely, if $\Ccal$ is stable then $\Ccal_{\leq 0} = 0$, which implies that every morphism in $\Ccal_{\leq 0}$ is an effective epimorphism and hence every morphism in $\Ccal$ is an effective epimorphism, as desired.

We now prove (3). Assume first that $\ccal$ is hypercomplete. By corollary \ref{coro equivalence hypercomplete} we have that every $\infty$-connective object of $\ccal$ is zero, and hence $\ccal$ is separated. To prove that $\ccal$ admits geometric realizations of simplicial objects it suffices to show that if $X_\bullet$ is a simplicial object of $\ccal$ then (the semisimplicial object underlying) $X_\bullet$ satisfies the Kan condition. In other words, we have to show that for every $n \geq 1$ and every $0 \leq i \leq n$ the map $X([n]) \rightarrow X(\Lambda^n_i)$ is an effective epimorphism. We claim that this map admits a section. Let 
\[
\Hom^{\enh}_\ccal(X(\Lambda^n_i), -): \ccal \rightarrow \Sp^\cn
\]
 be the functor corepresented by the grouplike $E_\infty$-comonoid $X(\Lambda^n_i)$. The image of the simplicial object $X_\bullet$ under $\Hom^{\enh}_\ccal(X(\Lambda^n_i), -)$ defines a simplicial object $Y_\bullet$ in $\Sp^\cn$. To prove our claim it will suffice to show that the map $Y([n]) \rightarrow Y(\Lambda^n_i)$ is an effective epimorphism in $\Sp^\cn$. In other words, we have reduce to showing that the simplicial $E_\infty$-group $Y_\bullet$ satisfies the Kan condition. This follows from \cite{DAGVIII} corollary 4.2.7.

Assume now that $\ccal$ is separated and admits geometric realizations of simplicial objects. Let $f: X \rightarrow Y$ be an $\infty$-connective morphism in $\ccal$. Since $\cofib(f)$ is $\infty$-connective and $\ccal$ is separated we have that $\cofib(f) = 0 $. Since $f$ is the fiber of the map $Y \rightarrow \cofib(f)$ we deduce that $f$ is an isomorphism.  By  corollary \ref{coro equivalence hypercomplete} to show that $\ccal$ is hypercomplete it will suffice to show that $\ccal$ admits geometric realizations of semisimplicial objects. Applying \cite{SAG} proposition C.6.6.9 we may find a   Grothendieck prestable category $\Dcal$ such that $\ccal$ is equivalent to the full subcategory of $\dcal$ on the almost compact objects. We may thus reduce to showing that almost compact objects of $\dcal$ are closed under geometric realizations of semisimplicial objects. To do so it is enough to show that for each $k \geq -1$ the full subcategory $(\dcal_{\leq k})^\omega$ of $\dcal_{\leq k}$ on the compact objects is closed under geometric realizations of semisimplicial objects. Since restriction along the inclusion $\Delta^{\op}_{\text{s}, < k+2 } \rightarrow \Delta^{\op}_{\text{s}}$ preserves colimits valued in $(k+1)$-categories, we may further reduce to showing that  $(\dcal_{\leq k})^\omega$ is closed under $\Delta^{\op}_{\text{s}, <k+2 } $-indexed colimits inside $\dcal_{\leq k}$, which follows from the fact that $\Delta^{\op}_{\text{s}, < k+2 } $ is a finite category.

It remains to establish (4). Assume first that $F$ is exact. We need to show that it preserves effective epimorphisms. Let $f: X \rightarrow Y$ be an effective epimorphism in $\Ccal$. Then the fiber sequence $\fib(f) \rightarrow X \rightarrow Y$ is also a cofiber sequence. Therefore $F(\fib(f)) \rightarrow F(X) \rightarrow F(Y)$ is a cofiber sequence, which implies that $F(f): F(X) \rightarrow F(Y)$ is an effective epimorphism, as desired.

Conversely, assume that $F$ is regular. By definition, $F$ is left exact, so to show exactness it suffices to show that $F$ preserves cofiber sequences. Let $X \rightarrow Y \rightarrow Z$ be a cofiber sequence. This is in particular a fiber sequence, so its image $F(X) \rightarrow F(Y) \rightarrow F(Z)$ is a fiber sequence in $\Dcal$. Furthermore, the map $Y \rightarrow Z$ is an effective epimorphism, and therefore $F(Y) \rightarrow F(Z)$ is an effective epimorphism as well. This implies that $F(X) \rightarrow F(Y) \rightarrow F(Z)$ is also a cofiber sequence, as desired.
\end{proof}
 

\subsection{Abelian \texorpdfstring{$(n,1)$}{(n,1)}-categories}\label{subsection abelian n}

Our next goal is to introduce a theory of abelian $(n,1)$-categories, which interpolates between the classical theory of abelian categories and the theory of finitely complete prestable categories. 

\begin{definition}
Let $\Ccal$ be a category and let $k \geq -2$ be an integer. We say that a morphism $f: X \rightarrow Y$ in $\Ccal$ is $k$-cotruncated if it defines a $k$-truncated morphism in $\Ccal^\op$. In other words, $f$ is $k$-cotruncated if for every object $Z$ in $\ccal$ the morphism $\Hom_\ccal(Y, Z) \rightarrow \Hom_\ccal(X, Z)$ of precomposition with $f$ is a $k$-truncated map of spaces.
\end{definition}

\begin{remark}\label{remark description k truncated}
Let $\Ccal$ be an additive category and let $k \geq -1$. Let $f: X \rightarrow Y$ be a morphism in $\Ccal$ admitting a fiber. Then $f$ is $k$-truncated if and only if $\operatorname{fib}(f)$ is a $k$-truncated object of $\Ccal$.

Let $n \geq 1$ and assume given a stable category with t-structure $\Dcal$ such that $\Ccal$ is the full subcategory of $\Dcal$ on those objects which are connective and $(n-1)$-truncated. Then a morphism $f: X \rightarrow Y$ in $\Ccal$ is $(n-2)$-truncated (resp. $(n-2)$-cotruncated) if and only if $f$ induces a monomorphism on $H_{n-1}$ (resp. an epimorphism on $H_0$). 
\end{remark}

\begin{remark}\label{remark fibers or cofibers are }
Let $n \geq 1$ and let $\Ccal$ be an additive $(n,1)$-category. Let $f: X \rightarrow Y$ be a morphism in $\Ccal$. If $f$ admits a cofiber then the map $Y \rightarrow \operatorname{cofib}(f)$ is $(n-2)$-cotruncated. Similarly, if $f$ admits a fiber then the map $\fib(f) \rightarrow X$ is $(n-2)$-truncated.
\end{remark}

\begin{definition}\label{definition abelian n}
Let $n \geq 1$ be an integer and let $\Ccal$ be an $(n,1)$-category. We say that $\Ccal$ is $n$-abelian (or an abelian $(n,1)$-category) if the following conditions are satisfied:
\begin{enumerate}[\normalfont (1)]
\item $\Ccal$ is additive and admits finite limits and colimits
\item Let $f: X \rightarrow Y$ be an $(n-2)$-cotruncated morphism in $\Ccal$. Then the fiber sequence $\operatorname{fib}(f) \rightarrow X \rightarrow Y$ is also a cofiber sequence.
\item  Let $f: X \rightarrow Y$ be an $(n-2)$-truncated morphism in $\Ccal$. Then the cofiber sequence $X \rightarrow Y \rightarrow \operatorname{cofib}(f)$ is also a fiber sequence.
\end{enumerate}
\end{definition}

\begin{example}
In the case $n = 1$, definition \ref{definition abelian n} recovers the classical notion of abelian category.
\end{example}

\begin{example}\label{example of abelian n}
Let $\Dcal$ be a finitely complete prestable category. Let $n \geq 1$ and set $\Ccal = \Dcal_{\leq n-1}$. It follows from remark \ref{remark description k truncated} that  the sequences appearing in items (2) and (3) of definition \ref{definition abelian n} are preserved under the inclusion of $\ccal$ into the Spanier-Whitehead category $\operatorname{SW}(\Dcal)$ (see \cite{SAG} section C.1). We conclude that $\Ccal$ is an abelian $(n,1)$-category.  We will see below that every abelian $(n,1)$-category arises in this way for some choice of $\Dcal$.
\end{example}

\begin{example}\label{example closed under finite limits and colimits is ab}
Let $n \geq 1$. Let $\dcal$ be an abelian $(n,1)$-category and let $\ccal$ be a full subcategory of $\dcal$ which is closed under finite limits and colimits. Then $\ccal$ is an abelian $(n,1)$-category. Combining this with examples \ref{example lmod is prestable} and \ref{example of abelian n} we deduce that if $A$ is an $(n-1)$-truncated connective ring spectrum then every full subcategory of $(\LMod_A^\cn)_{\leq n-1}$ which is closed under finite limits and colimits is an abelian $(n,1)$-category. We will see below that every abelian $(n,1)$-category arises in this way for some choice of $A$. 
\end{example}

\begin{remark}
One may contemplate extending definition \ref{definition abelian n} in the limit $n \to \infty$. Declaring every morphism to be both $\infty$-truncated and $\infty$-cotruncated yields the notion of stable category. There are however two other notions which are relevant, namely finitely complete prestable categories and opposites of finitely complete prestable categories. These notions no longer have the symmetry under passage to opposites that is present in definition \ref{definition abelian n}. It follows however from theorems \ref{teo prestable iff exact and additive} and \ref{teo abelian n} that the axioms for abelian $(n,1)$-categories may be reformulated in such a way that in the limit $n \to \infty$ they recover the notion of finitely complete prestable category. 
\end{remark}

We are now ready to formulate the main theorem of this section:

\begin{theorem}\label{teo abelian n}
Let $n \geq 1$.
\begin{enumerate}[\normalfont (1)]
\item An $(n,1)$-category $\Ccal$ is $n$-abelian if and only if it is additive and $n$-exact.
\item Let $F: \ccal \rightarrow \dcal$ be a functor between abelian $(n,1)$-categories. Then $F$ is regular if and only if it is exact (i.e., preserves finite limits and colimits). 
\end{enumerate}  
\end{theorem}

The proof of theorem \ref{teo abelian n} needs some preliminary lemmas.

\begin{lemma}\label{lemma Aex is additive}
Let $\Ccal$ be a regular additive category. Then $\Ccal^\ex$ is additive.
\end{lemma}
\begin{proof}
We first show that $\Ccal^\ex$ is pointed. Let $0$ be the zero object of $\Ccal$. Since the inclusion $\Ccal \rightarrow \Ccal^\ex$ preserves finite limits, we have that $0$ is a final object of $\Ccal^\ex$. We claim that $0$ is also initial in $\Ccal^\ex$. To prove this, it suffices to show that the functor $\Hom_{\Sh(\Ccal)}(0, -): \Sh(\Ccal) \rightarrow \Spc$ preserves geometric realizations. We claim in fact that it preserves all colimits. Indeed, this follows from the fact that the functor $\Hom_{\Ccal}(0, -): \Ccal \rightarrow \Spc$ is regular.

We now show that $\Ccal^\ex$ admits biproducts. Let $X, Y$ be objects of $\Ccal^\ex$. Then $X \times Y$ belongs to $\Ccal^\ex$, and we have maps $(\id, 0): X \rightarrow X \times Y \leftarrow Y : (0, \id)$. We must show that this is a coproduct diagram in $\Ccal^\ex$. Since products commute with geometric realizations in $\Sh(\Ccal)$, we have that this diagram commutes with taking geometric realizations of groupoids in the $X$ and $Y$ variables. Hence we may reduce to the case when $X$ and $Y$ belong to $\Ccal$.

Say that an object $Z$ in $\Ccal^\ex$ is good if the induced map 
\[
\psi_{X, Y, Z} : \Hom_{\Ccal^\ex}(X \times Y, Z) \rightarrow \Hom_{\Ccal^\ex}(X, Z) \times \Hom_{\Ccal^\ex}(Y,Z)
\]
 is an equivalence for every $X, Y$ in $\Ccal$. We have to show that every object in $\Ccal^\ex$ is good. Since $\Ccal$ admits biproducts, we see that every $Z$ in $\Ccal$ is good. Hence we reduce to showing that if $Z_\bullet$ is a groupoid object which is termwise good, then its realization $|Z_\bullet|$ is good.

Observe that for every $X, Y$ in $\Ccal$ we have a commutative square of spaces
\[
\begin{tikzcd}[column sep = large]
{| \Hom_{\Ccal^\ex}(X \times Y, Z_{\bullet}) | } \arrow{r}{|\psi_{X, Y, Z_\bullet}|} \arrow{d}{}  & {|\Hom_{\Ccal^\ex}(X, Z_\bullet)| \times |\Hom_{\Ccal^\ex}(Y, Z_\bullet)|} \arrow{d}{} \\
 \Hom_{\Ccal^\ex}(X \times Y, |Z_{\bullet}|) \arrow{r}{\psi_{X, Y, |Z_\bullet|}} &   \Hom_{\Ccal^\ex}(X, |Z_{\bullet}|) \times \Hom_{\Ccal^\ex}(Y, |Z_{\bullet}|) .  
\end{tikzcd}
\]
We have that $\Hom_{\Ccal^\ex}(X \times Y, Z_{\bullet})$ is the \v{C}ech nerve of the map 
\[
\Hom_{\Ccal^\ex}(X \times Y, Z_{0}) \rightarrow  \Hom_{\Ccal^\ex}(X \times Y, |Z_{\bullet}|) 
\]
 and therefore  the left vertical map in the above square is a monomorphism whose image consists of those maps $X \times Y \rightarrow |Z_\bullet|$ which factor through $Z_0$. Similarly, the right vertical map is a monomorphism whose image consists of those pairs of maps $X \rightarrow |Z_\bullet|$ and $Y \rightarrow |Z_\bullet|$ which factor through $Z_0$.

The fact that $Z_\bullet$ is termwise good implies that the upper horizontal arrow in the above diagram is an equivalence. We therefore conclude that $\psi_{X, Y, |Z_\bullet|}$ restricts to an equivalence between the space of maps $X \times Y \rightarrow |Z_\bullet|$ which factor through $Z_0$ and the space of pairs of maps $X \rightarrow |Z_\bullet|$ and $Y \rightarrow |Z_\bullet|$ which factor through $Z_0$.

Assume now given a pair of effective epimorphisms $X_0 \rightarrow X$ and $Y_0 \rightarrow Y$, with $X_0, Y_0$ in $\Ccal$. Let $X_\bullet$ and $Y_\bullet$ be the \v{C}ech nerves of the projections $X_0 \rightarrow X$ and $Y_0 \rightarrow Y$, respectively. We have a commutative square
\[
\begin{tikzcd}[column sep = 2.5cm]
\Hom_{\Ccal^\ex}(X \times Y, |Z_\bullet|) \arrow{r}{\psi_{X, Y, |Z_\bullet|}} \arrow{d}{} & \Hom_{\Ccal^\ex}(X , |Z_\bullet|)  \times \Hom_{\Ccal^\ex}(Y, |Z_\bullet|)  \arrow{d}{} \\
\lim_{\Delta} \Hom_{\Ccal^\ex}(X_n \times Y_n, |Z_\bullet|) \arrow{r}{\lim_\Delta \psi_{X_n, Y_n, |Z_\bullet|}}  & \lim_{\Delta} \Hom_{\Ccal^\ex}(X_n , |Z_\bullet|)  \times \lim_\Delta \Hom_{\Ccal^\ex}(Y_n, |Z_\bullet|) 
\end{tikzcd}
\]
where the vertical arrows are equivalences. It follows from this that $\psi_{X, Y, |Z_\bullet|}$ restricts to an equivalence between the space of maps $X \times Y \rightarrow |Z_\bullet|$ whose restriction to $X_0 \times Y_0$ factors through $Z_0$, and the space of pairs of maps $X \rightarrow |Z_\bullet|$ and $Y\rightarrow |Z_\bullet|$ whose restriction to $X_0$ and $Y_0$ factor through $Z_0$.

Hence to show that $\psi_{X, Y, |Z_\bullet|}$ is an equivalence we may reduce to proving the following two assertions:
\begin{enumerate}[(a)]
\item For every map $f: X \times Y \rightarrow |Z_\bullet|$ we may choose $X_0, Y_0$ such that the restriction of $f$ to $X_0 \times Y_0$ factors through $Z_0$.
\item For every pair of maps $g_X: X \rightarrow |Z_\bullet|$ and $g_Y : Y \rightarrow |Z_\bullet|$ we may choose $X_0, Y_0$ such that the restriction of $g_X$ and $g_Y$ to $X_0$ and $Y_0$ factor through $Z_0$.
\end{enumerate}

Item (b) is a direct consequence of the fact that the projection $Z_0 \rightarrow |Z_\bullet|$ is an effective epimorphism. We now  address (a). Let $f: X \times Y \rightarrow |Z_\bullet|$ be a morphism. Since the projection $Z_0 \rightarrow |Z_\bullet|$ is an effective epimorphism, we may pick an effective epimorphism $U \rightarrow X \times Y$ with $U$ in $\Ccal$, such that the restriction of $f$ to $U$ factors through $Z_0$. We may now take $X_0 = U \times_{X \times Y} X$ and $Y_0 = U \times_{X \times Y} Y$.

It now follows that $\Ccal^\ex$ is semiadditive, so the forgetful functor from the category of commutative comonoids in $\Ccal^\ex$ to $\Ccal^\ex$ is an equivalence. To finish the proof, it remains to show that every object of $\Ccal^\ex$ is in fact a grouplike commutative comonoid in $\Ccal^\ex$. Since $\Ccal^\ex$ is generated by $\Ccal$ under colimits and the property that a comonoid be grouplike is closed under colimits, we may reduce to showing that every object of $\Ccal$ is a grouplike commutative comonoid in $\Ccal^\ex$. Since $\Ccal$ is additive  every object of $\Ccal$ is a grouplike commutative comonoid in $\Ccal$. We may thus reduce to showing that the inclusion $\Ccal \rightarrow \Ccal^\ex$ preserves finite coproducts. This is a direct consequence of the fact that $\Ccal$ and $\Ccal^\ex$ are semiadditive and the inclusion $\Ccal \rightarrow \Ccal^\ex$ preserves products.
\end{proof}

\begin{lemma}\label{base change of cotruncated}
Let $n \geq 1$ and let $\Ccal$ be an abelian $(n,1)$-category. Then the class of $(n-2)$-cotruncated morphisms in $\Ccal$ is stable under base change.
\end{lemma}
\begin{proof}
Assume given a cartesian square
\[
\begin{tikzcd}
X' \arrow{d}{f'} \arrow{r}{g'} & X \arrow{d}{f} \\ Y' \arrow{r}{g} & Y
\end{tikzcd} \rlap{\hspace{1cm}$(\ast)$}
\] 
where $f$ is $(n-2)$-cotruncated. We need to show that $f'$ is $(n-2)$-cotruncated. Observe that the map $f+g : X \oplus Y' \rightarrow Y$ is $(n-2)$-cotruncated. It follows that the fiber sequence 
\[
X' \xrightarrow{(g', -f')} X \oplus Y' \xrightarrow{f+g} Y
\]
 is also a cofiber sequence, and hence $(\ast)$ is a bicartesian square

Let $U$ be the cofiber of the map $\operatorname{fib}(f') \rightarrow X'$ and let $f'': X' \rightarrow U$ be the induced map. By remark \ref{remark fibers or cofibers are } we have that $f''$ is $(n-2)$-cotruncated.  We claim that the square
\[
\begin{tikzcd}
X' \arrow{d}{f''} \arrow{r}{g'} & X \arrow{d}{f} \\ U \arrow{r}{g} & Y
\end{tikzcd}   \rlap{\hspace{1cm}$(\ast\ast)$}
\]
is a pushout square. Since both $f$ and $f''$ are $(n-2)$-cotruncated, it will suffice to show that the induced map $\fib(f'') \rightarrow \fib(f)$ is an isomorphism.  By remark \ref{remark fibers or cofibers are } we have that the map $\fib(f') \rightarrow X'$ is $(n-2)$-truncated, and therefore we have $\fib(f'') = \fib(f')$. We may thus reduce to showing that the map $\fib(f') \rightarrow \fib(f)$ is an isomorphism, which follows from the fact that $(\ast)$ is cartesian.

Let $p: U \rightarrow Y'$ be the canonical map. Note that there is a morphism of commutative squares from $(\ast\ast)$ to $(\ast)$. Its cofiber is the square
\[
\begin{tikzcd}
0 \arrow{d}{} \arrow{r}{} & 0 \arrow{d}{} \\ \cofib(p) \arrow{r}{} & 0.
\end{tikzcd}
\]
Since both $(\ast)$ and $(\ast \ast)$ are pushout squares, the above is also a pushout square, from which it follows that $\cofib(p) = 0$. Applying remark \ref{remark fibers or cofibers are } once more we conclude that $p$ is $(-1)$-cotruncated, and in particular $(n-2)$-cotruncated. The lemma now follows from the fact that $f' = p f''$.
\end{proof}

\begin{proof}[Proof of theorem \ref{teo abelian n}]
We begin with a proof of (1). Assume first that $\Ccal$ is $n$-abelian. We first show that $\Ccal$ is regular. Let $f: X \rightarrow Y$ be a morphism in $\Ccal$. We use the equivalence of lemma \ref{lemma sobre DK}. The \v{C}ech nerve of $f$ is the image under $\dk$ of the map $g: \fib(f) \rightarrow X$, and its geometric realization exists and is given by $\cofib(g)$. Assume now given a base change $f': X' \rightarrow Y'$ of $f$, and let $g': \fib(f') \rightarrow X'$ be the induced map. We have a commutative diagram
\[\begin{tikzcd}
\fib(f') \arrow{r}{g'} \arrow{d}{=} & X' \arrow{r}{} \arrow{d}{} & \cofib(g') \arrow{d}{} \arrow{r}{} & Y' \arrow{d}{} \\
\fib(f) \arrow{r}{g} & X \arrow{r}{} & \cofib(g) \arrow{r}{} &  Y.
\end{tikzcd}
\]
Our task is to show that the rightmost square is cartesian. The comparison map $\mu: \cofib(g') \rightarrow \cofib(g) \times_Y Y'$ sits in a morphism of sequences
\[
\begin{tikzcd}
\fib(f') \arrow{r}{g'} \arrow{d}{ \id } & X' \arrow{d}{\id} \arrow{r}{} & \cofib(g')\arrow{d}{\mu} \\
\fib(f') \arrow{r}{g'} \arrow{r}{} & X' \arrow{r}{} & \cofib(g) \times_Y Y'.
\end{tikzcd}
\]
Here the top row is both a fiber and a cofiber sequence, and the bottom row is a fiber sequence. Applying lemma \ref{base change of cotruncated} we see that the map $X' \rightarrow \cofib(g) \times_Y Y'$ is $(n-2)$-cotruncated, and therefore the bottom row is also a cofiber sequence. The fact that $\mu$ is an isomorphism now follows from the fact that the left and middle vertical arrows in the above diagram are isomorphisms.

We now show that $\Ccal$ is $n$-exact. Let $X_\bullet$ be an $n$-efficient groupoid object in $\Ccal$. Once again we use the equivalence of lemma \ref{lemma sobre DK}. The groupoid $X_\bullet$ is the image of a morphism $f: U \rightarrow X_0$ under the map $\dk: \Ccal^{[1]} \rightarrow \Ccal^{\Delta^\op}$. We have $X_1 = U \oplus X_0$ and the map $X_1 \rightarrow X_0 \times X_0$ is given by the matrix
\[
\begin{pmatrix}
f & \id_{X_0} \\
0 & \id_{X_0}
\end{pmatrix}.
\]
The map $f$ is a base change of the above, and hence it is $(n-2)$-truncated. Therefore $f$ is the fiber of its cofiber, which implies that $X_\bullet$ is effective.

Assume now that $\Ccal$ is additive and $n$-exact. By proposition \ref{prop ex completion of ex n cat}, we have $\Ccal = (\Ccal^{\ex})_{\leq n-1}$. Combining theorem \ref{teo prestable iff exact and additive} with lemma \ref{lemma Aex is additive} we see that $\Ccal^\ex$ is a finitely complete prestable category. The fact that $\Ccal$ is $n$-abelian now follows from example \ref{example of abelian n}. 

We now prove (2). Assume first that $F$ is exact. Then $F$ is in particular left exact, so to prove regularity we only need to show that $F$ preserves geometric realizations of \v{C}ech nerves. This follows from lemma \ref{lemma sobre DK}, using the fact that $F$ is additive and preserves cofiber sequences. Assume now that $F$ is regular. Then  $F$ is in particular left exact, so we only need to show that $F$ is right exact. Let $F^\ex: \ccal^\ex \rightarrow \dcal^\ex$ be the induced functor, and note that we have a commutative square of categories
\[
\begin{tikzcd}
\ccal^\ex \arrow{r}{F^\ex}\arrow{d}{\tau_{\leq n-1}} & \dcal^\ex \arrow{d}{\tau_{\leq n-1}} \\
\ccal \arrow{r}{F} & \dcal.
\end{tikzcd}
\]
The vertical arrows are localization functors, so to prove that $F$ is right exact it suffices to show that $F^\ex$ is right exact. This follows from theorem \ref{teo prestable iff exact and additive}, by virtue of lemma \ref{lemma Aex is additive}.
\end{proof}


\subsection{Deriving abelian \texorpdfstring{$(n,1)$}{(n,1)}-categories}\label{subsection deriving}

We now specialize the theory of exact completions and hypercompletions to the additive context to obtain a theory of connective derived categories for abelian $(n,1)$-categories.

\begin{notation}\label{notation nab}
Let $\Cat_{\normalfont \text{pst}}$ be the subcategory of $\Cat$ on the finitely complete prestable categories and exact functors. We denote by $\Cat^{\normalfont \text{b}}_{\normalfont \text{pst}}$ the full subcategory of $\Cat_{\normalfont \text{pst}}$ on those finitely complete prestable categories $\ccal$ which are bounded, and by $\Cat_{\normalfont \text{pst}, \hyp}$ the full subcategory of $\Cat_{\normalfont \text{pst}}$ on those finitely complete prestable categories which are separated and admit geometric realizations. 

For each $n \geq 1$ we denote by $(n, 1)\kr\Cat_{\normalfont \text{ab}}$ the subcategory of $\Cat$ on the abelian $(n,1)$-categories and exact functors. We note that the assignment $\ccal \mapsto \ccal_{\leq n-1}$ assembles into a functor  $ \Cat_{\normalfont \text{pst}} \rightarrow  (n, 1)\kr\Cat_{\normalfont \text{ab}}$.
\end{notation}

\begin{theorem}\label{theorem deriving}
Let $n \geq 1$.
\begin{enumerate}[\normalfont (1)]
\item  The functor $(-)_{\leq n-1} : \Cat^{\normalfont \text{b}}_{\normalfont \text{pst}} \rightarrow  (n, 1)\kr\Cat_{\normalfont \text{ab}}$ admits a fully faithful left adjoint 
\[
\der^{\bounded}(-)_{\geq 0}:  (n, 1)\kr\Cat_{\normalfont \text{ab}} \rightarrow \Cat^{\normalfont \text{b}}_{\normalfont \text{pst}}
\]
that sends each abelian $(n,1)$-category $\ccal$ to $\ccal^\ex$. Furthermore, an object belongs to the image of $\der^{\bounded}(-)_{\geq 0}$ if and only if it is $(n-1)$-complicial.
\item The functor $(-)_{\leq n-1} : \Cat_{\normalfont \text{pst}, \hyp} \rightarrow  (n, 1)\kr\Cat_{\normalfont \text{ab}}$ admits a fully faithful left adjoint 
\[
\der(-)_{\geq 0}:  (n, 1)\kr\Cat_{\normalfont \text{ab}} \rightarrow \Cat_{\normalfont \text{pst}, \hyp}
\]
that sends each abelian $(n,1)$-category $\ccal$ to $\ccal^\hyp$. Furthermore, an object belongs to the image of $\der(-)_{\geq 0}$ if and only if it is $(n-1)$-complicial.
\end{enumerate}  
\end{theorem}

\begin{remark}
Let $n \geq 1$ and let $\ccal$ be an abelian $(n,1)$-category. Then one may define stable categories
\[
\der^\bounded(\ccal) = \colim ( \der^{\bounded}(\ccal)_{\geq 0} \xrightarrow{\Sigma} \der^{\bounded}(\ccal)_{\geq 0} \xrightarrow{\Sigma} \ldots )
\]
and 
\[
\der^-(\ccal) = \colim ( \der(\ccal)_{\geq 0} \xrightarrow{\Sigma} \der(\ccal)_{\geq 0} \xrightarrow{\Sigma} \ldots ).
\]
In the language of \cite{SAG} C.1.1, these are the Spanier-Whitehead categories of $\der^\bounded(\ccal)_{\geq 0}$ and $\der(\ccal)_{\geq 0}$. It follows from \cite{SAG} proposition C.1.2.9 that $\der^\bounded(\ccal)$ and $\der^-(\ccal)$ admit t-structures whose connective halves are given by $\der^\bounded(\ccal)_{\geq 0}$ and $\der(\ccal)_{\geq 0}$.

Assume now that $n = 1$. Then it follows from theorem \ref{theorem deriving} that $\der^\bounded(\ccal)$ is the unique stable category equipped with a t-structure and an equivalence  $\der^\bounded(\ccal)^\heartsuit = \ccal$ satisfying the following conditions:
\begin{itemize}
\item Every object in $\der^\bounded(\ccal)$ is $k$-connective and $k$-truncated for some $k$.
\item Let $X$ be a connective object in $\der^\bounded(\ccal)$. Then there exists an object $X'$ in $\ccal$ and a morphism $X' \rightarrow X$ inducing an epimorphism on $H_0$.
\end{itemize}

Similarly, $\der^-(\ccal)$ is the unique stable category equipped with a t-structure and an equivalence  $\der^-(\ccal)^\heartsuit = \ccal$ satisfying the following conditions:
\begin{itemize}
\item Every object in $\der^-(\ccal)$ is $k$-connective for some $k$.
\item Every $\infty$-connective object of $\der^-(\ccal)$ is $0$ (in other words, the t-structure on $\der^-(\ccal)$ is left separated).
\item The full subcategory of $\der^-(\ccal)$ on the connective objects admits geometric realizations of simplicial objects.
\item Let $X$ be a connective object in $\der^-(\ccal)$. Then there exists an object $X'$ in $\ccal$ and a morphism $X' \rightarrow X$ inducing an epimorphism on $H_0$.
\end{itemize}

It follows from the above description that  $\der^\bounded(\ccal)$ (resp. $\der^-(\ccal)$) is the stable enhancement of the  classical bounded (resp. bounded above) derived category of $\ccal$. We refer to \cite{AntieauEnhancement} for a discussion of the uniqueness of such enhancements.
\end{remark}

Before going into the proof of theorem \ref{theorem deriving} we record some consequences.

\begin{corollary}
Let $n \geq 1$ and let $\ccal$ be an abelian $(n,1)$-category. Then there exists an $(n-1)$-truncated connective ring spectrum $A$ and a fully faithful exact embedding $\ccal \rightarrow (\LMod_A^\cn)_{\leq n-1}$.
\end{corollary}
\begin{proof}
Applying corollary \ref{coro freyd mitchell} we obtain a connective ring spectrum $B$ and a fully faithful exact functor $\der(\ccal)_{\geq 0} \rightarrow \LMod_B^\cn$. This restricts to a fully faithful exact functor $\ccal \rightarrow (\LMod_B^\cn)_{\leq n-1}$. The corollary now follows by setting $A = \tau_{\leq n-1}(B)$.
\end{proof}

\begin{corollary}\label{coro abelian iff ind abelian}
Let $n \geq 1$. A finitely complete category $\ccal$ is an abelian $(n,1)$-category if and only if $\Ind(\Ccal)$ is an abelian $(n,1)$-category.
\end{corollary}
\begin{proof}
Assume first that $\Ccal$ is an abelian $(n,1)$-category. Then  $\Ind(\der(\ccal)_{\geq 0})$ is a Grothendieck prestable category, and we have
\[
 \Ind(\der(\ccal)_{\geq 0})_{\leq n-1} = \Ind((\der(\ccal)_{\geq 0})_{\leq n-1}) = \Ind(\ccal).
\]
The fact that $\Ind(\ccal)$ is an abelian $(n,1)$-category now follows from example \ref{example of abelian n}.

Assume now that $\Ind(\Ccal)$ is an abelian $(n,1)$-category. Then $\ccal$ is an $(n,1)$-category as well, and since $\ccal$ is finitely complete we deduce that $\ccal$ is idempotent complete. It follows that $\ccal$ agrees with the full subcategory of $\Ind(\ccal)$ on the compact objects. Since  $\Ind(\ccal)$ admits finite colimits and $\ccal$ admits finite limits we have that $\ccal$ is closed under finite limits and colimits inside $\Ind(\ccal)$. The result now follows from example \ref{example closed under finite limits and colimits is ab}.
\end{proof}

\begin{example}
Let $n \geq 1$ and let $A$ be an $(n-1)$-truncated connective ring spectrum. Then the category $\LMod_A^\cn$ of connective  left $A$-modules is a separated Grothendieck prestable category. Since $\LMod_A^\cn$ is generated under colimits by the $(n-1)$-truncated object $A$, we see that $\LMod_A^\cn$ is $(n-1)$-complicial. It follows from theorem \ref{theorem deriving} that we have equivalences
\[
\LMod_A^\cn = \der((\LMod_A^\cn)_{\leq n-1})_{\geq 0} = ((\LMod_A^\cn)_{\leq n-1})^\hyp. 
\]

Assume now that $A$ is left coherent (in other words, $\pi_0(A)$ is a left coherent ring and $\pi_k(A)$ is a finitely presented left $\pi_0(A)$-module for all $k$) and let $\LMod_A^{\cn, \text{aperf}}$ be the full subcategory of $\LMod_A^\cn$ on the almost perfect connective left $A$-modules. Then it follows from \cite{HA} proposition 7.2.4.18 that $\LMod_A^{\cn, \text{aperf}}$ is a finitely complete prestable category. This is in fact separated and admits geometric realizations, so it defines an object in $\Cat_{\normalfont \text{pst}, \hyp}$. An application of part (5) of \cite{HA} proposition 7.2.4.11 shows that $\LMod_A^{\cn, \text{aperf}}$ is $(n-1)$-complicial. It follows from theorem \ref{theorem deriving} that we have equivalences
\[
\LMod_A^{\cn, \text{aperf}} =  \der((\LMod_A^{\cn,\text{aperf}})_{\leq n-1})_{\geq 0} = ((\LMod_A^{\cn,\text{aperf}})_{\leq n-1})^\hyp. 
\]
\end{example}

We now turn to the proof of theorem \ref{theorem deriving}.
 
\begin{lemma}\label{lemma Chyp is additive}
Let $\ccal$ be an additive category. Then $\ccal^\hyp$ is additive.
\end{lemma}
\begin{proof}
Since the projection $\ccal^{\hyp} \rightarrow \widehat{\Ccal^\hyp}$ is conservative and preserves finite products and coproducts, it is enough to show that  $\widehat{\Ccal^\hyp}$ is additive. This is the limit of the categories $(\ccal^\hyp)_{\leq k}$ along the truncation functors. Since these functors preserve finite products and coproducts we may further reduce to showing that $(\ccal^\hyp)_{\leq k}$ is additive for all $k \geq 0$. Applying proposition \ref{prop regular in sheaf topos} we have
\[
(\ccal^\hyp)_{\leq k} = \Sh(\ccal)^\reg \cap \Sh(\ccal)_{\leq k} = (\ccal^\ex)_{\leq k}.
\]
The category $(\ccal^\ex)_{\leq k}$ is a localization of $\ccal^\ex$, and the localization functor preserves finite products. The fact that $(\ccal^\hyp)_{\leq k} $ is additive now follows from lemma \ref{lemma Aex is additive}.
\end{proof}

\begin{proof}[Proof of theorem \ref{theorem deriving}]
In light of theorems \ref{teo prestable iff exact and additive} and \ref{teo abelian n} and corollaries \ref{corollary fully faithful exact on n1} and 
\ref{corollary fully faithful hypercomplete on n1}, the theorem will follow if we are able to show that if $\ccal$ is an abelian $(n,1)$-category then $\ccal^\ex$ and $\ccal^\hyp$ are additive. This follows from lemmas \ref{lemma Aex is additive} and \ref{lemma Chyp is additive}. 
\end{proof}

\ifx\inmain\undefined
\bibliographystyle{myamsalpha2}
\bibliography{References}
\fi

\bibliographystyle{myamsalpha2}
\bibliography{References}
  
\end{document}